\documentclass{article}
\usepackage[style=numeric,maxnames=99,sortcites=true]{biblatex}
\addbibresource{library.bib}

\usepackage{hyperref}
\usepackage{amsmath}
\usepackage{bbm}
\usepackage{tikz}
\usepackage{tikz-cd}
\usepackage{amsthm}
\usepackage{caption}
\usepackage{ amssymb }
\newtheorem{Theorem}{Theorem}[section]

\mathchardef\standardl=\mathcode`l
\mathcode`l=\ell
\newcommand{\deactivatel}{\mathcode`l=\standardl}
\makeatletter
\edef\operator@font{\operator@font\noexpand\deactivatel}
\makeatother

\title{Computing Non-Repetitive Sequences with a Computable Lefthanded Local Lemma}
\author{Daniel Mourad}
\theoremstyle{plain}
\newtheorem{theorem}[Theorem]{Theorem}
\newtheorem{Proposition}[Theorem]{Proposition}
\newtheorem{Lemma}[Theorem]{Lemma}

\newtheorem{Question}{Question}
\theoremstyle{definition}
\newtheorem{Definition}[Theorem]{Definition}

\theoremstyle{plain}

\newtheorem{Claim}[Theorem]{Claim}

\DeclareMathOperator{\ran}{ran}

\DeclareMathOperator{\vbl}{VBL}
\DeclareMathOperator{\rsp}{RSP}
\DeclareMathOperator{\stc}{STC}
\DeclareMathOperator{\Prob}{Pr}
\DeclareMathOperator{\ch}{ch}

\DeclareMathOperator{\cch}{cch}
\let\log\relax
\DeclareMathOperator{\log}{log}

\newcommand{\hlog}{\widehat{\log}}
\newcommand{\mc}[1]{\mathcal{\uppercase{#1}}}
\newcommand{\wh}[1]{\widehat{\uppercase{#1}}}
\newcommand{\bb}[1]{\mathbb{\uppercase{#1}}}
\newcommand{\mb}[1]{\mathbb{\uppercase{#1}}}
\newcommand{\nbr}{\Gamma}
\newcommand{\N}{\mb{N}}
\newcommand{\harpoon}{\upharpoonright}
\newcommand{\converges}{{\downarrow}}

\newcommand{\norm}[1]{\left\lVert#1\right\rVert}
\begin{document}
\maketitle
\begin{abstract}
	The lefthanded Lov\'asz local lemma (LLLL) is a generalization of the Lov\'asz local lemma (LLL), a powerful technique from the probabilistic method. We prove a computable version of the LLLL and use it to effectivize a collection of results on the existence of certain types of non-repetitive sequences via the LLL and LLLL. This represents the first constructive proof of these results. 
\end{abstract}

\section{Introduction}
The Lov\'asz local lemma (LLL) \cite{ErdosLovasz1975} is a theorem from combinatorics that generalizes the fact that if $\mc{A}$ is a finite set of mutually independent events such that each $A \in \mc{A}$ has positive probability, then the probability of their intersection is non-zero. 
In exchange for stricter requirements on the probabilities of the events, the LLL allows us to draw the same conclusion when $\mc{A}$ is \textit{not} a mutually independent set of events. For a directed graph $G$ and vertex $v \in G$, let $\nbr(v) = \{w \in G: v \rightarrow w\}$ be the set of out-neighbors of $v$. Let $\nbr^+(v) = \nbr(v) \cup \{v\}$ be the inclusive out neighbors of $v$.  Most applications of the LLL are framed in terms of avoiding ``bad'' events, hence we change the conclusion to say that the intersection of the \textit{complements} of the events is non-zero. 
\begin{Theorem}[Lov\'asz Local Lemma, General Form]
	\label{thm:LLL}
	Let $\mathcal{A}$ be a finite set of events in some probability space. Suppose there exists a directed graph $G$ on $\mathcal{A}$ and a real-valued function $z: \mathcal{A} \to (0,1)$ such that, for each $A \in \mathcal{A}$,
	\begin{enumerate}
		\item \label{cond:Ind} $A$ is mutually independent from $\mathcal{A} \setminus \nbr^+(A)$ and
		\item \label{cond:LLL} \[\Pr(A) \leq z(A) \prod_{B \in \Gamma(A)}(1 - z(B)).\]
	\end{enumerate}
	Then,
	\[
	\Pr\left(\bigcap_{A \in \mathcal{A}} \bar{A}\right) \geq \prod_{A \in \mathcal{A}}(1 - z(A)) > 0.
	\]
\end{Theorem}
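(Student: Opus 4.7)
The plan is to establish the standard strengthening, by induction, that for every $A \in \mathcal{A}$ and every $\mathcal{S} \subseteq \mathcal{A} \setminus \{A\}$,
\[
\Pr\left(A \,\middle|\, \bigcap_{B \in \mathcal{S}} \bar{B}\right) \leq z(A).
\]
Once this is in hand, fixing an enumeration $A_1, \ldots, A_n$ of $\mathcal{A}$ and applying the chain rule yields the conclusion of the theorem:
\[
\Pr\left(\bigcap_{i=1}^n \bar{A_i}\right) = \prod_{i=1}^n \Pr\left(\bar{A_i} \,\middle|\, \bigcap_{j<i} \bar{A_j}\right) \geq \prod_{i=1}^n (1 - z(A_i)) > 0.
\]

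For the induction itself, I would proceed on $|\mathcal{S}|$. The base case $\mathcal{S} = \emptyset$ is immediate from hypothesis \ref{cond:LLL} since each factor $(1-z(B))$ lies in $(0,1)$. For the inductive step, split $\mathcal{S}$ into $\mathcal{S}_1 = \mathcal{S} \cap \Gamma(A)$ and $\mathcal{S}_2 = \mathcal{S} \setminus \Gamma(A)$, and rewrite the target as a ratio of conditional probabilities:
\[
\Pr\left(A \,\middle|\, \bigcap_{B \in \mathcal{S}} \bar{B}\right) = \frac{\Pr\left(A \cap \bigcap_{B \in \mathcal{S}_1} \bar{B} \,\middle|\, \bigcap_{C \in \mathcal{S}_2} \bar{C}\right)}{\Pr\left(\bigcap_{B \in \mathcal{S}_1} \bar{B} \,\middle|\, \bigcap_{C \in \mathcal{S}_2} \bar{C}\right)}.
\]
The numerator is at most $\Pr(A \mid \bigcap_{C \in \mathcal{S}_2} \bar{C})$, which equals $\Pr(A)$ because $\mathcal{S}_2 \subseteq \mathcal{A} \setminus \Gamma^+(A)$ and hypothesis \ref{cond:Ind} gives mutual independence; this is bounded above by $z(A)\prod_{B \in \Gamma(A)}(1-z(B))$. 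The denominator, expanded via the chain rule as $\prod_i \Pr(\bar{B_i} \mid \bigcap_{j<i} \bar{B_j} \cap \bigcap_{C \in \mathcal{S}_2}\bar{C})$, is bounded below by $\prod_{B \in \mathcal{S}_1}(1-z(B))$ using the inductive hypothesis on each factor (every conditioning set is strictly smaller than $\mathcal{S}$). Since $\mathcal{S}_1 \subseteq \Gamma(A)$, the denominator's factors cancel a subset of the numerator's product, and the remaining factors are at most $1$, leaving a ratio of at most $z(A)$.

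There is no genuinely deep step; the main obstacle is simply bookkeeping. One must partition $\mathcal{S}$ in exactly the right way so that mutual independence can be invoked (only the piece \emph{outside} $\Gamma(A)$ can be slid under the bar on $A$), and one must verify that every denominator that appears is strictly positive so division is legitimate. The latter falls out of the inductive hypothesis itself, since each factor $\Pr(\bar{B} \mid \cdots) \geq 1 - z(B) > 0$ by the range assumption $z : \mathcal{A} \to (0,1)$.
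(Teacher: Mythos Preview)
The paper does not actually prove Theorem~\ref{thm:LLL}; it is stated in the introduction as background and attributed to Erd\H{o}s and Lov\'asz \cite{ErdosLovasz1975}, with no proof given. Your argument is the standard textbook proof of the general LLL and is correct as written, including the care you take about positivity of the conditioning events and the size of the conditioning sets in the inductive step.
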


The original formulation of the LLL by \citeauthor{ErdosLovasz1975} \cite{ErdosLovasz1975} applies only when $\mathcal{A}$ is finite. However, when used in combinatorics, the non-emptiness of  $\bigcap_{A \in \mathcal{A}} \bar{A}$ is the main point of interest. In this context the LLL can be made to generalize to infinite sets $\mathcal{A}$ of events if $\Omega$ is compact and each $A \in \mathcal{A}$ is open (so that $\bar{A}$ is closed).

However, in many cases of interest, this use of compactness is non-constructive. In fact, the original proof of the LLL is non-constructive for finite objects as well. In cases where $\Omega$ is finite, Erd\H{o}s and Lov\'asz's proof of the LLL gives no hint as to how to find an element of $\bigcap_{A \in \mathcal{A}} \bar{A}$ in a way faster than a brute force search. This problem has been of great interest in combinatorics and computer science \cite{Beck1991,Alon1991,Molloy1998,Czumaj2000,Srinivasan2008}. In a seminal result, Moser and Tardos \cite{Moser2010} give a simple and efficient constructive version of the LLL. Their constructive version applies to events in a \textit{variable context}, a collection $\mc{x}$ of mutually independent random variables such that each event $A \in \mc{A}$ is determined by the values of a finite subset $\vbl(A) \subset \mc{X}$ of variables. They use a probabilistic algorithm known as the \textit{resample algorithm}: start with a random sample of the variables in $\mc{X}$. Then, pick an $A \in \mc{A}$ which is true and then resample the variables in $\vbl(A)$. Repeat the latter step until each $A \in \mc{A}$ is false. Moser and Tardos' analysis of this algorithm proceeds by constructing trees, known as \textit{witness trees} or \textit{Moser trees} which record the resamplings leading up to a specific resampling. 

The algorithm of Moser and Tardos has been used as a foundation for many generalizations and improvements \cite{HeLiSun2023,Harvey2020,Pegden2014,Kolipaka2012,Achlioptas2012,Kolipaka2011,HarrisSrinivasan2019}, including a computable version by Rumyantsev and Shen \cite{Rumyantsev2013}. This effective version has been applied to multiple problems in computability theory and reverse math \cite{Reitzes2022,Liu,Csima2019}. 

However, this computable version of the LLL does not readily apply to all applications of the infinite LLL. One such case is a previously non-constructive theorem of Beck on the existence of nonrepetitive sequences. To describe intervals of indexed variables, we adopt the notation of \cite{GrytczukPryzybyloZhu2011}, in which $x[i,j) = x_i,\dots,x_{j-1}$, and similarly for other combinations of inclusive and non-inclusive endpoints.
	\newtheorem*{BecksTheorem}{Theorem~\ref{thm:Beck}}
\begin{BecksTheorem}[\citeauthor{Beck1981}\cite{Beck1981}]
	For every $\epsilon > 0$ there is an $N_\epsilon$ such that Player 1 has a strategy in the binary sequence game ensuring that any two identical blocks $x[i,i + n) = x_i x_{i+1} x_{i+2}\dots x_{i+n-1}$ and $x[j,j + n) = x_j x_{j+1} x_{j+2}\dots x_{j+n-1}$ of length $n > N_\epsilon$ have distance at least $f(n) = (2-\epsilon)^{n/2}$. That is, if $x_{i+s} = x_{j+s}$ for all $0\leq s <n$ and $n > N_\epsilon$, then $|i -j| < (2-\epsilon)^{n/2}$.  
\end{BecksTheorem}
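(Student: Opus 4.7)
The plan is to exhibit a randomized strategy for Player 1 and then invoke the computable LLLL developed in the paper to extract an effective deterministic strategy. I would take Player 1 to play an independent uniform bit on each of her turns, producing a random sequence $y$ of her moves. The random bits $y$ form the variable context; for any (adaptive) deterministic strategy of Player 2, the overall sequence $x$ is a measurable function of $y$, so all bad events are expressible in this context.

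For each triple $(i,j,n)$ with $i<j$, $n>N_\epsilon$, and $j-i<(2-\epsilon)^{n/2}$, let $B_{i,j,n}$ be the event that $x[i,i+n)=x[j,j+n)$. Avoiding every $B_{i,j,n}$ is exactly the conclusion of the theorem. The variable set $\vbl(B_{i,j,n})$ consists of Player 1's coin flips lying in $[i,i+n)\cup[j,j+n)$. I would bound $\Pr(B_{i,j,n})\leq 2^{-cn}$ with $c=1/2$: in the non-overlapping case ($j-i\geq n$) block equality imposes independent constraints on about half of Player 1's bits in $[j,j+n)$, while in the overlapping case $j-i<n$ the equality forces a periodic structure that still determines at least $n/2$ of Player 1's bits.

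For the LLLL, I would linearly order the bad events by the rightmost coordinate $j+n$ (breaking ties lexicographically) and define left-neighbors as events sharing a variable and preceding in the order. Choosing $z(B_{i,j,n})=\alpha\beta^n$ with $\beta=(2-\epsilon)^{-1/2}$ and $\alpha$ small, I would verify the LLLL inequality by estimating, for each fixed $n$, the number of left-neighbors of a given length $n'$ (at most $O(n)\cdot(2-\epsilon)^{n'/2}$ using the constraint $j'-i'<(2-\epsilon)^{n'/2}$) and checking that the resulting sum remains $O(\beta^n)$ for $n$ large, uniformly for $n\geq N_\epsilon$. Taking $N_\epsilon$ large enough closes the inequality. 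The computable LLLL then converts the existence of a good realization into a computable strategy for Player 1.

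The main obstacle is obtaining the factor $(2-\epsilon)$ rather than merely $2$: this requires the lefthanded ordering, so that only roughly half of the symmetric dependency neighborhood is counted, and a tight match between the probability bound $2^{-n/2}$ and the geometric neighbor count $(2-\epsilon)^{n/2}$ --- this is precisely why the result falls outside the scope of the ordinary Moser--Tardos framework and needs the LLLL. A secondary subtlety is the adversarial aspect, since an adaptive Player 2 depends on past Player 1 bits, but because Player 2's strategy is a fixed function of history this collapses into the LLLL analysis once the variable context is chosen to be Player 1's bits alone.
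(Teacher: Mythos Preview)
There are two genuine gaps.

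First, your variable set is wrong. You write that $\vbl(B_{i,j,n})$ consists of Player~1's coin flips lying in $[i,i+n)\cup[j,j+n)$. But Player~2 is adaptive: the bits Player~2 places in $[j,j+n)$ depend on \emph{every} earlier Player~1 move, so in fact $\vbl(B_{i,j,n})$ must contain all of Player~1's flips in $[1,j+n)$. You acknowledge this at the end but wave it away; it does not ``collapse'' automatically. The paper handles it by taking $\vbl(A_{k,\ell,n}) = x[0,\ell+n)\cap\mathcal X$ and then exploiting the $\rsp/\stc$ split of Theorem~\ref{CLLLL}: one sets $\rsp(A_{k,\ell,n}) = x[\ell,\ell+n)\cap\mathcal X$ and verifies the $P^*$ condition (Condition~\ref{PStarCondition}) against arbitrary valuations of the static variables. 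This split, not a halving of the dependency neighborhood, is the reason the lefthanded framework is needed here --- in the paper's Theorem~\ref{CLLLL} the neighborhood relation is still undirected.

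Second, and more seriously, the local lemma does \emph{not} output a strategy for Player~1. The entire setup fixes a Player~2 strategy $g$ in advance; the variable context is Player~1's random bits, and the LLLL produces a single sequence of Player~1 moves that defeats that particular $g$. This is exactly Proposition~\ref{Yeach} (in the finite case) or Theorem~\ref{BeckComputableGame} (in the infinite, $g$-computable case). To pass from ``for every $g$ there is a defeating play'' to ``Player~1 has a winning strategy,'' the paper uses K\"onig's lemma and open determinacy (see the proof of Theorem~\ref{thm:Beck2}). Your proposal omits this step entirely, and your final sentence claiming a computable Player~1 strategy is an overclaim: the paper explicitly leaves the computability of the full strategy as an open question, and Proposition~\ref{prop:noUniform} shows there is no single Player~1 sequence defeating all $g$.
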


The reason it is unclear whether Rumyantsev and Shen's computable LLL can be used to effectivise Theorem~\ref{thm:Beck} is because the computable LLL requires that each event has finitely many neighbors. In the proof of Theorem \ref{thm:Beck}, we define bad event $A_{k,\ell,n}$ for $(\ell - k) < (2-\epsilon)^n$ to be the event that the blocks $[k,k+n)$ and $[\ell,\ell+n)$ are identical. Then, $\vbl(A_{k,\ell,n}) = x[k,k+n) \cup x[\ell,\ell+n)$. However, we then have that $A_{k,\ell,n}$ is a neighbor of $A_{k,\ell + m, n +m}$ for all $m \geq 0$. 

In this paper, we modify the Moser--Tardos algorithm by resampling only a subset of the variables in $\vbl(A)$ to obtain computable versions of theorems such as Theorem~\ref{thm:Beck}. At the same time, we extend the reach of Moser and Tardos' analysis of the algorithm by introducing an order $\prec$ in which events are prioritized. In our analysis, we leverage this priority to extract more information about the order of resamplings from Moser trees. The introduction of a priority for resampling is inspired by a generalization of the LLL called the \textit{lefthanded Lov\'asz local lemma} (LLLL). This modified algorithm provides both constructive and computable versions of the LLL and LLLL. 

We use the modified algorithm to effectivize previously non-constructive applications of the LLL, one already mentioned due to Beck \cite{Beck1981} (Theorem \ref{thm:Beck}) and another due to \citeauthor{Alon1992} \cite{Alon1992} (Theorem \ref{thm:Alon}) on the existence of certain types of highly non-repetitive sequences. We also effectivise Pegden's \cite{Pegden2011} application of LLLL to game versions of Theorems \ref{thm:Beck} and \ref{thm:Alon}. Additionally, we effectivize applications of the LLLL due to \citeauthor{GrytczukPryzybyloZhu2011} \cite{GrytczukPryzybyloZhu2011} (Theorem~\ref{ThueChoice} ; and \citeauthor{PeresSchlag2010} \cite{PeresSchlag2010} (Theorem~\ref{PeresSchlagThm} ).

The LLLL allows for stronger interdependence between the events in exchange for the existence of a partial order on the events with certain properties. 

\begin{Theorem}[LLLL \cite{Pegden2011}]
	\label{LLLL}
	Let $\mc{A}$ be a finite set of events. Let $G$ be a directed graph on $\mc{A}$ endowed with a partial quasi-order $\prec$ such that 
	\begin{equation}
		\label{LOCondition}
		\begin{split}
			\text{if } B \in \nbr^+(A) &\text{ and } C \not \in \nbr(A), \text{ then}\\
			C \succ B &\text{ implies } C \succ A.
		\end{split}
	\end{equation}
	Let $P^*: \mc{A} \to (0,1)$ be a function such that for each $A \in \mc{A}$ and each set of events $\mc{B} \subset \mc{A} \setminus \nbr^+(A)$ satisfying $B \not\succ A$ for all $B \in \mc{B}$, we have 
	\[
		\Pr\left(A\bigg| \bigcap_{B \in \mc{B}} \bar{B}\right) \leq P^*(A). 
	\]
	
	Further suppose that there is a function $z: \mc{A} \to (0,1)$ such that, for each $A \in \mc{A}$,
	\[
		P^*(A) \leq z(A) \prod_{B \in \nbr(A)}\left(1 - z(B)\right).
	\]
	Then, 
	\[
		\Pr\left(\bigcap_{A \in \mathcal{A}} \bar{A}\right) > 0.
	\]
\end{Theorem}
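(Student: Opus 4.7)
My plan is to adapt the classical two-step conditional-probability proof of the LLL, using the partial quasi-order $\prec$ to control the order in which the chain rule is applied. The heart of the argument is an inductive lemma: for every $A \in \mathcal{A}$ and every finite $\mathcal{B} \subseteq \mathcal{A} \setminus \{A\}$ with $B \not\succ A$ for all $B \in \mathcal{B}$, one has $\Pr(A \mid \bigcap_{B \in \mathcal{B}} \bar{B}) \leq z(A)$. The proof is by induction on $|\mathcal{B}|$. The base case $\mathcal{B} = \emptyset$ is immediate from the $P^*$-hypothesis (the empty set vacuously lies outside $\Gamma^+(A)$ and vacuously satisfies the $\not\succ A$ condition), yielding $\Pr(A) \leq P^*(A) \leq z(A) \prod_{B \in \Gamma(A)}(1 - z(B)) \leq z(A)$.

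For the inductive step I would partition $\mathcal{B} = \mathcal{B}_1 \sqcup \mathcal{B}_2$ with $\mathcal{B}_1 = \mathcal{B} \cap \Gamma(A)$ and $\mathcal{B}_2 = \mathcal{B} \setminus \Gamma(A)$, and apply Bayes' formula to write
\[ \Pr\!\left(A \,\middle|\, \bigcap_{B \in \mathcal{B}} \bar{B}\right) = \frac{\Pr\!\left(A \cap \bigcap_{B \in \mathcal{B}_1}\bar{B} \,\middle|\, \bigcap_{C \in \mathcal{B}_2}\bar{C}\right)}{\Pr\!\left(\bigcap_{B \in \mathcal{B}_1}\bar{B} \,\middle|\, \bigcap_{C \in \mathcal{B}_2}\bar{C}\right)}. \]
The numerator is bounded above by $\Pr(A \mid \bigcap_{C \in \mathcal{B}_2}\bar{C})$, and since $\mathcal{B}_2 \subseteq \mathcal{A} \setminus \Gamma^+(A)$ with every $C \in \mathcal{B}_2$ still satisfying $C \not\succ A$, the $P^*$-hypothesis gives the bound $z(A) \prod_{B \in \Gamma(A)}(1 - z(B))$.

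For the denominator I would enumerate $\mathcal{B}_1 = \{B_1, \ldots, B_k\}$ as a linear extension of $\prec$ restricted to $\mathcal{B}_1$ with $\prec$-smallest first, apply the chain rule, and bound each factor $\Pr(\bar{B_i} \mid \bigcap_{j<i} \bar{B_j} \cap \bigcap_{C \in \mathcal{B}_2} \bar{C}) \geq 1 - z(B_i)$ by the inductive hypothesis. The main obstacle, and the only place the proof genuinely departs from the classical LLL, is verifying that every element of the conditioning set $\{B_1,\ldots,B_{i-1}\} \cup \mathcal{B}_2$ is indeed $\not\succ B_i$. For $C = B_j$ with $j < i$ this follows from the enumeration (since $B_j \preceq B_i$ and $\succ$ is strict). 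For $C \in \mathcal{B}_2$ this is exactly the point at which condition (\ref{LOCondition}) earns its keep: $B_i \in \Gamma^+(A)$ and $C \notin \Gamma(A)$, so $C \succ B_i$ would force $C \succ A$, contradicting the hypothesis $C \in \mathcal{B}$. Multiplying the factors yields a denominator at least $\prod_{B \in \Gamma(A)}(1 - z(B))$, and the ratio bound $\leq z(A)$ completes the induction.

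To conclude the theorem, I would enumerate $\mathcal{A} = \{A_1, \ldots, A_n\}$ as a linear extension of $\prec$ with smallest first and apply the chain rule one last time:
\[ \Pr\!\left(\bigcap_{A \in \mathcal{A}} \bar{A}\right) = \prod_{i=1}^n \Pr\!\left(\bar{A_i} \,\middle|\, \bigcap_{j<i} \bar{A_j}\right) \geq \prod_{i=1}^n (1 - z(A_i)) > 0, \]
where the key lemma applies to each $A_i$ because $A_j \preceq A_i$, and hence $A_j \not\succ A_i$, for every $j < i$.
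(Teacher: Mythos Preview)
The paper does not prove Theorem~\ref{LLLL}; it is stated with attribution to Pegden and used only as motivation and as a point of comparison for the paper's own results (Theorems~\ref{ALLLL} and~\ref{CLLLL}). Your argument is correct and is, up to cosmetic differences, Pegden's original proof: the classical Erd\H{o}s--Lov\'asz conditional-probability induction on $|\mathcal{B}|$, with the quasi-order $\prec$ governing the enumeration so that the conditioning set in each application of the inductive hypothesis lies entirely in the $\not\succ$ region. Your identification of the single place where condition~(\ref{LOCondition}) is invoked---to push each $C \in \mathcal{B}_2$ below $B_i$---is exactly right.

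It is perhaps worth noting the contrast with what the paper actually does prove. The algorithmic and computable versions are established by a completely different route: a modified Moser--Tardos resample algorithm, analysis of Moser trees recording chains of ``lost progress'', and a coupling with an auxiliary $T$-check process to bound the probability that any given tree arises. That machinery yields expected-running-time bounds and layerwise computability, neither of which the conditional-probability argument can deliver; conversely, your argument works for the directed-graph, partial-quasi-order setting of Theorem~\ref{LLLL} in full, whereas the paper's algorithmic analysis requires the undirected neighbourhood relation induced by shared resample variables (hence the doubled neighbourhoods discussed in the conclusion and the gap between Theorems~\ref{ThueChoice} and~\ref{ComputableThueChoice}).
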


Theorem \ref{CLLLL} makes a similar improvement to the results of Moser and Tardos; and Rumyantsev and Shen, allowing stronger interdependence in exchange for the existence of a linear order (a computable linear order in the computable version) with certain properties. In Theorem~\ref{CLLLL}, we interpret the linear order as determining the priority of the events, and Condition~\ref{LOCondition} as a requirement that, before ``moving on'' to non-neighbors of $A$ of higher priority than neighbors of $A$, we must first process $A$. 

\subsection{A Similar Resample Algorithm}

The modified resample algorithm is a special case of the \textit{partial resample algorithm} (although the analysis and results are incomparable) of \citeauthor{HarrisSrinivasan2019} \cite{HarrisSrinivasan2019},  in which a subset of $\vbl(A)$ is chosen at random according to a chosen probability distribution $Q_A$ and resampled. To obtain our modified resample algorithm, one may set the support of $Q_A$ to be $\{\rsp(A)\}$. The partial resample algorithm also split events into atomic events $B = \{x_{i_1} = \ell_{j_1} \wedge x_{i_2} = \ell_{j_2} \dots x_{i_m} = \ell_{j_m}\}$. In the case of Beck's Theorem, the natural way to run the partial resample algorithm would be to split $A_{k,\ell,n}$ into the atomic events $A_{k,\ell,n}^\sigma = \{x[k,k + n) = x[\ell,\ell+n) = \sigma\}$ for each $\sigma \in 2^n$. Furthermore, their analogue to Condition~\ref{cond:LLL} uses $\Pr(\rsp(A_{k,\ell,n}^\sigma) = \sigma)$ instead of $\Pr(A_{k,\ell,n}^\sigma)$. 

This results in events having $2^n$ many more neighbors without decreasing their probabilities, making it unclear how choose appropriate $z_i$ to fulfill the condition analogous to Condition~\ref{cond:LLL}. Our analysis avoids this problem by remaining at the level of events. For the sake of clarifying the relationship between these results, we note that there is a possibility that the methods of \cite{HarrisSrinivasan2019} and the present paper may be combined. In the example of Beck's Theorem, this may look like having the probabilistic bound be based on $\Pr(A_{k,\ell,n}^\sigma)$ instead of $\Pr(\rsp(A_{k,\ell,n}^\sigma) = \sigma)$. We leave this to future work.

\section{Constructive and Computable Versions of the Lefthanded Local Lemma}

Recall that an event $A$ being in the sigma-algebra $\sigma(\mc{X})$ generated  by a set $\mc{X}$ of random variables is equivalent to the truth or falsity of $A$ being completely determined by the outcomes of the variables in $\mc{X}$. 

Let $\mc{X}$ be a mutually independent set of countably many random variables with finite ranges. 
Let $\mc{A}$ be a finite set of events in the sigma-algebra $\sigma(\mc{X})$ generated by $\mc{X}$ such that each $A \in \mc{A}$ has a finite set $\vbl(A) \subset \mc{X}$ such that $A \in \sigma(\vbl(A))$. 
For each $A \in \mc{A}$, let $\rsp(A) \subset\vbl(A)$ be a set of variables that we will resample when $A$ is bad. 
Let $\stc(A) =  \vbl(A)\setminus \rsp(A)$ be the \textit{static} variables of $A$. 
For the computable version of the lemma, we will require for each $x \in \mc{X}$ that $\{A:x \in \rsp(A)\}$ is finite.

For each $A \in \mc{A}$, let 
\[
\nbr(A) = \{B \in \mc{A}: \rsp(A)  \cap \rsp(B) \neq \emptyset \text{ and } A \neq B\}
\]
be the neighborhood of $A$. Let $\nbr^+(A) = \nbr(A) \cup {A}$. 

Let $\prec$ be a linear order on $\mc{A}$ with type $\omega$ such that, for all $A, B, C \in \mc{A}$, 
\begin{equation}
	\label{LefthandedRule}
	\begin{split}
	\text{if } A \in \nbr^+(B) &\text{ and } A \not\in \nbr(C), \text{ then }\\ 
	C \succ B &\text{ implies } C \succ A. 
	\end{split}
\end{equation}

We can think of Condition \ref{LefthandedRule} as a requirement that, prior to addressing any event $C \succ B$, we first make sure that all shared neighbors of $B$ and $C$ have been taken care of. 

If $A \prec B$ and $A \not\in \nbr(B)$, then we write $A \ll B$. We also require that if $A \gg B$ and $A \not \in \nbr(B)$, then $\rsp(A) \cap \vbl(B) = \emptyset$. We can think of this condition as requiring that we pick the $\rsp$ sets and order $\prec$ in such a way so that the variables in $\stc(A)$ have lower priority than the variables in $\rsp(A)$.

	All currently known applications of the LLLL which are not also an application of a less general form the of LLL admit a variable context in which $\vbl(A)$ is an interval in the variable set $\mc{X} = \{x_1,x_2,\dots\}$ and use the same partial order. Recall the notation $x[i,j) = x_i,\dots,x_{j-1}$, and similarly for other combinations of inclusive and non-inclusive endpoints. Such applications typically use the same linear order $\prec$. 

	\begin{Lemma}
		\label{IntervalsWork}
		Suppose that each $A \in \mc{A}$ is such that $\vbl(A) = x[i_A,r_A]$ and $\rsp(A) = x[j_A,r_A]$ for some $i_A \leq j_A \leq r_A$. Let $\prec^*$ be the quasi order defined by $A \prec^* B$ if and only if $r_A \leq r_B$. Let $\prec$ be any linearization of $\prec^*$. Then, $\prec$ is a linear order of type $\omega$ satisfying Condition~\ref{LefthandedRule} and we also have that $A \gg B$ implies that $\rsp(A) \cap \vbl(B) = \emptyset$.
	\end{Lemma}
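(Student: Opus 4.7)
The plan is to verify the three claims of the lemma in turn. The guiding observation throughout is that $x_{r_A} \in \rsp(A)$: the rightmost index of an event's variable set is always a resample variable. For the order type, this combines with the standing hypothesis that $\{A : x \in \rsp(A)\}$ is finite for each $x \in \mc{X}$: only finitely many events can share a given right-endpoint $r_A = n$, so $\{B : B \prec A\} \subseteq \{B : r_B \leq r_A\}$ is a finite union of finite sets, giving $\prec$ order type at most $\omega$.

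For Condition~\ref{LefthandedRule}, I assume $A \in \nbr^+(B)$, $A \notin \nbr(C)$, $A \neq C$, and $C \succ B$. If $A = B$ the conclusion is immediate. Otherwise $A \in \nbr(B)$, so $\rsp(A) \cap \rsp(B) \neq \emptyset$ forces $j_A \leq r_B$. Since $C \succ B$ in any linearization of $\prec^*$ implies $r_C \geq r_B$, we obtain $r_C \geq j_A$. The equality $r_A = r_C$ would place $x_{r_A}$ in both $\rsp(A)$ and $\rsp(C)$, contradicting $A \notin \nbr(C)$; and disjointness of $\rsp(A)$ and $\rsp(C)$ rules out $r_C < j_A$. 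The only remaining possibility is $r_A < j_C \leq r_C$, so $A \prec^* C$ and hence $C \succ A$ in the linearization.

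For the $\gg$ clause, suppose $A \succ B$ and $A \notin \nbr(B)$. As before, $r_A = r_B$ is impossible (else $x_{r_A}$ lies in both resample sets), and $A \succ B$ forces $r_A \geq r_B$, so $r_A > r_B$. Then disjointness of $\rsp(A) = x[j_A, r_A]$ and $\rsp(B) = x[j_B, r_B]$ with $r_A > r_B \geq j_B$ forces $j_A > r_B$; since $\vbl(B) = x[i_B, r_B]$ lies entirely to the left of index $j_A$, we get $\rsp(A) \cap \vbl(B) = \emptyset$. The only delicate point throughout is the tiebreaking in the linearization, since $C \succ B$ in $\prec$ gives only $r_C \geq r_B$ rather than a strict inequality; the $x_{r_A}$ observation is what rules out ties in the critical cases, after which the verification reduces to arithmetic on interval endpoints.
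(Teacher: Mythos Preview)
Your argument is correct and is the natural proof; the paper states this lemma without proof, so there is nothing to compare against. One sentence is garbled, though it does not affect correctness: after establishing $r_C \geq j_A$, you write that ``disjointness of $\rsp(A)$ and $\rsp(C)$ rules out $r_C < j_A$,'' which is backwards (disjointness is \emph{consistent} with $r_C < j_A$) and in any case redundant. What you actually need there is: from $r_C \geq j_A$ and $[j_A,r_A]\cap[j_C,r_C]=\emptyset$ it follows that $j_C > r_A$, hence $r_C \geq j_C > r_A$ and so $C \succ A$. With that line cleaned up the proof is complete.
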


We derive some important properties of $\ll$, $\prec$, and $\nbr$. 

\begin{Proposition} The following are true. 
		\begin{enumerate}
		\label{LefthandedProps}
		
		\item For each $A,B \in \mc{A}$, exactly one of the following hold.
		\begin{enumerate}
			\item $A \gg B$
			\item $A \in \nbr^+(B)$
			\item $B \gg A$
		\end{enumerate}
		\item If $B \in \nbr^+(A)$ and $C \ll A$, then $C \prec B$. 
		\item The relation $\ll$ is transitive
	\end{enumerate}
\end{Proposition}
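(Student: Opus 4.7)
The plan is to prove the three parts in order, using as the main technical lubricant the fact that $\nbr$ is symmetric: the defining condition $\rsp(A)\cap\rsp(B)\neq\emptyset$ is symmetric in $A,B$, so $A\in\nbr(B)\iff B\in\nbr(A)$ and likewise $A\in\nbr^+(B)\iff B\in\nbr^+(A)$. This symmetry lets us freely rewrite the hypotheses of Condition~\eqref{LefthandedRule} and of the $\ll$ relation so they match whichever form we need when chaining implications.

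For part~(1), I would split on whether $A=B$, whether $A\in\nbr(B)$, or neither. In the first two cases, $A\in\nbr^+(B)$ holds, and both $A\gg B$ and $B\gg A$ are ruled out since the definition of $\ll$ requires both strict $\prec$-comparability and non-adjacency. In the remaining case $A\neq B$ and $A\notin\nbr(B)$, linearity of $\prec$ forces exactly one of $A\prec B$ or $B\prec A$, giving exactly one of $A\ll B$ or $A\gg B$. The three outcomes are mutually exclusive because $\prec$ is antisymmetric and the definition of $\ll$ already forbids equality and adjacency.

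For part~(2), assume $B\in\nbr^+(A)$ and $C\ll A$, and show $C\prec B$ by ruling out $C=B$ and $C\succ B$. If $C=B$, then $C\in\nbr^+(A)$, so either $C=A$ (impossible since $C\prec A$) or $C\in\nbr(A)$ (impossible since $C\ll A$). If $C\succ B$, rewrite the hypotheses symmetrically as $A\in\nbr^+(B)$ and $A\notin\nbr(C)$ and invoke Condition~\eqref{LefthandedRule} to conclude $C\succ A$, contradicting $C\prec A$.

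Part~(3) falls out of part~(2). Given $A\ll B$ and $B\ll C$, transitivity of $\prec$ gives $A\prec C$, so it remains only to show $A\notin\nbr(C)$. Suppose for contradiction $A\in\nbr(C)$, hence $A\in\nbr^+(C)$. Apply part~(2) with the role substitution $(A,B,C)\mapsto(C,A,B)$: the hypotheses $A\in\nbr^+(C)$ and $B\ll C$ yield $B\prec A$, contradicting $A\prec B$ from $A\ll B$. The main obstacle throughout is purely clerical: Condition~\eqref{LefthandedRule} and part~(2) look nearly symmetric in their first two arguments because $\nbr$ is symmetric, so one must track carefully which event plays which role and apply the symmetry of $\nbr$ explicitly rather than tacitly when realigning hypotheses.
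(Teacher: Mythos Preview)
Your proof is correct and follows essentially the same route as the paper's: part~(1) by trichotomy of $\prec$ on the non-neighbor case, part~(2) by contraposing Condition~\eqref{LefthandedRule} after using the symmetry of $\nbr$, and part~(3) by reducing to part~(2). Your write-up is in fact more careful than the paper's, which glosses over the mutual exclusivity in part~(1), the $C=B$ subcase in part~(2), and the explicit invocation of symmetry of $\nbr$ throughout.
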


\begin{proof} Suppose $\ll$, $\prec$, and $\nbr$ are as above. 
	\begin{enumerate}
		\item If $A \not \in \nbr^+(B)$, then $A \neq B$, so either $A \prec B$ or $B \prec A$. 
		\item We have that $A \in \nbr(B)$ and $A \not \in \nbr(C)$. Then, by Condition \ref{LefthandedRule}, we have that $ C\succ B$ implies that $C \succ A$. Since $C \prec A$, we conclude that $C \prec B$. 		
		\item Suppose that $A \gg C$ and $C \gg B$. Suppose that $A \in \nbr(B)$. Then, by part 2 of this Proposition, we obtain that $C \prec B$, which contradicts that $C \gg B$. 
	\end{enumerate}
\end{proof}

Now we define the modified resample algorithm. The modified resample algorithm operates in stages. At stage $s$, the modified resample algorithm produces a valuation $R_s$ of the variables in $\mc{X}$. We also keep track of the events resampled, forming a log $E_1,E_2,\dots$ of events in $\mc{A}$.

Stage $0$: For each $x \in \mc{X}$, let $R_0(x)$ be a random sample of $x$ in a way such that the set of random variables $\{R_0(x): x \in \mc{X}\}$ is mutually independent. 

Stage $n+1$: Let $A \in \mc{A}$ be the $\prec$-least event in $\mc{A}$ which is false under the valuation $R_n$. If there is no such event $A$, then halt. Otherwise, let $E_{n+1} = A$. For each $x \in \rsp(A)$, let $R_{n+1}(x)$ be a random sample of $x$ which is mutually independent of all the samplings taken so far. For each $x \in \mc{X} \setminus \rsp(A)$, let $R_{n+1}(x) = R_{n}(x)$. 

Theorem \ref{ALLLL} gives conditions which imply that the modified resample algorithm reliably makes progress finding a valuation of $\mc{X}$ which makes each $A \in \mc{A}$ false. 

For a valuation $\mu$  of some subset $S \subset \mc{X}$ of variables, let $E_\mu$ be the event that, for all  $x \in S$, we have $\mu(x) = x$. Here, when we write $\mu(x) = x$, we abuse notation by having $x$ play the role of a variable on the left hand side of the equation and having $x$ play the role of a random variable on the right hand side of the equation. 

For $A \in \mc{A}$, let $\mc{E}_A$ be the set of valuations of $\stc(A)$. 

\begin{Theorem}
	\label{ALLLL}
	Suppose there is a $P^*: \mc{A} \to (0,1)$ and a $z: \mc{A} \to (0,1)$ such that, for each $A \in \mc{A}$ and $\mu \in \mc{e}_A$,
	\begin{equation} 
		\label{PStarCondition}
		P^*(A) \geq \sup_{\mu \in \mc{E}_A} ((\Prob(A | E_\mu))
	\end{equation}

	and
	\[
		P^*(A) \leq z(A)\prod_{B \in \nbr(A)} \left(1 - z(B)\right).
	\]
	
	Then, for any finite set of events $B \subset \mc{A}$, the stopping time $t_\mc{B} =$ ``the first stage of the modified resample algorithm at which all $B \in \mc{b}$ are false'' has finite expectation. 
\end{Theorem}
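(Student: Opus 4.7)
The plan is to adapt the Moser--Tardos witness-tree argument \cite{Moser2010}, extending it along two axes: the priority order $\prec$ (in the spirit of a constructive LLLL) and the distinction between $\rsp(A)$ and $\stc(A)$. To each stage $s$ we associate a labeled rooted tree $\tau_s$ by scanning the log $E_1,\dots,E_s$ backwards: start with the root labeled $E_s$, and for $j = s-1$ down to $1$ attempt to attach a node labeled $E_j$ at the deepest node $v$ whose label lies in $\nbr^+(E_j)$ (breaking depth-ties by some fixed canonical rule), discarding $E_j$ if no such $v$ exists. The standard reconstruction argument then shows that the map $s \mapsto \tau_s$ is injective.

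The central probabilistic estimate is that, for a fixed admissible tree $\tau$, the probability that $\tau$ appears as some $\tau_s$ is at most $\prod_{v \in \tau} P^*(\mathrm{label}(v))$. We obtain this via the ``tape'' reformulation of Moser--Tardos: each variable $x \in \mc{X}$ is assigned in advance an i.i.d.\ sequence of values, and a resampling of $A$ consumes the next unused value from the tape of each $x \in \rsp(A)$. Occurrence of $\tau$ implies that, at each node $v$ labeled $B$, the fresh values drawn for $\rsp(B)$ together with the then-current valuation of $\stc(B)$ make $B$ hold; because those fresh values are independent of all earlier draws and the valuation of $\stc(B)$ lies in some $\mu \in \mc{E}_B$, the conditional probability of this event is at most $P^*(B)$ by Condition~\ref{PStarCondition}, and these conditional probabilities multiply across $\tau$.

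Next, the priority order $\prec$ forces a crucial structural property of trees that can actually arise: at every internal node labeled $B$, every child's label lies in $\nbr(B)$ (not merely in $\nbr^+(B)$), and no two siblings share a label. This uses Proposition~\ref{LefthandedProps}: if during the backward scan some event $E_j$ were neither $B$ itself nor in $\nbr(B)$, Condition~\ref{LefthandedRule} and the fact that $E_s$ was $\prec$-least bad at stage $s$ force $E_j \succ B$, and the attachment process sends $E_j$ elsewhere. Combined with $P^*(B) \leq z(B) \prod_{C \in \nbr(B)} (1 - z(C))$, the standard branching-process calculation bounds the expected number of trees rooted at any fixed $A$ during a run by $z(A)/(1 - z(A))$. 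Summing this over $A \in \mc{A}$ (finite) bounds the expected total number of resamplings, and since $t_{\mc{B}}$ is dominated by the total runtime of the algorithm, $\mathbb{E}[t_{\mc{B}}] < \infty$.

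The main obstacle will be the independence step in the per-tree probability bound when $\stc(B)$ may have been altered by earlier events in the log, including events \emph{not} appearing in $\tau$. The tape formulation handles this for $\rsp$ variables, but one must verify that events outside $\tau$ do not leak probabilistic dependence into the bound. The clause ``$A \gg B$ and $A \not\in \nbr(B)$ implies $\rsp(A) \cap \vbl(B) = \emptyset$'' is precisely what rules out the worst contaminators of $\stc(B)$ by higher-priority non-neighbors, so that the dependence of $A$ on $\stc(B)$ at each node can be absorbed by the supremum over $\mu \in \mc{E}_B$ in the definition of $P^*(B)$. Formalizing this as a genuine conditional-expectation manipulation, rather than a per-node worst case, is the delicate bookkeeping step and is where the bulk of the technical work will lie.
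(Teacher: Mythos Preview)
Your outline is the paper's argument: Moser trees built by scanning the log backward, a per-tree bound $\prod_v P^*([v])$ via the tape coupling, absorption of the $\stc$-dependence through the supremum defining $P^*$, and the Moser--Tardos branching-process sum. The paper formalizes the ``delicate bookkeeping step'' you flag by introducing an auxiliary random process, the $T$-check, which interleaves forced resamplings of the tree labels with ordinary resample steps restricted to events $\ll$ every remaining tree node; the structural lemma you are missing (Proposition~\ref{MoserTreeDoubleLessThan}) is that every log entry strictly between two consecutive tree-node stages is $\ll$ all later tree labels, so such entries never touch the $\rsp$ variables of later tree nodes and the relevant tape positions are tree-determined (Proposition~\ref{ConstantVs}). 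This is exactly the independence you need to multiply the $P^*$ factors.

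Two small fixes. First, your claim that children of a node labeled $B$ have labels in $\nbr(B)$ rather than $\nbr^+(B)$ is false---$B$ can be resampled twice in succession, producing a child also labeled $B$---but the branching-process bound (Lemma~\ref{sumMoserTreeBound}) only needs $\nbr^+(B)$ together with distinct labels among siblings, which does hold. Second, $\mc{A}$ is countably infinite (the order $\prec$ has type $\omega$), so summing $z(A)/(1-z(A))$ over all of $\mc{A}$ need not converge and $t_{\mc{B}}$ is not dominated by any ``total runtime''; instead, replace $\mc{B}$ by its $\prec$-downward closure (still finite) and observe that before stage $t_{\mc{B}}$ the $\prec$-least bad event always lies in $\mc{B}$, so every Moser tree produced has root label in $\mc{B}$ and the sum runs over the finite set $\mc{B}$.
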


For the computable version of Theorem \ref{ALLLL}, we introduce some computability conditions. We say that a countable set of finite sets $X_1,X_2,\dots$ is computably presented if the G\"odel code of $X_i$ is uniformly computable with respect to $i$. 

\begin{Theorem}
	\label{CLLLL}
	Suppose that a set of variables $\mc{X} = \{x_1,x_2,\dots\}$ and events $\mc{A} = \{A_1,A_2,\dots\}$ satisfy the conditions of Theorem \ref{ALLLL}. Assume the following conditions.
	\begin{itemize}
	\item The sets $\rsp(A_i)$, $\vbl(A_i)$, and the finite set of assignments of the variables in $\vbl(A_i)$ that make $A_i$ true are all computably presented.
	\item Each $x_i$ has a rational-valued probability distribution that is uniformly computable with respect to $i$. Also, the set $\{j: x_i \in \rsp(A_j)\}$ of indices of events which can resample $x_i$ is both finite and computably presented.
	\item The order $\prec$ is computable.
	\end{itemize}
	Additionally, require that there is a rational constant $\alpha \in (0,1)$ such that for all $A \in \mc{a}$, 
	
	\begin{equation}
		\label{LeftLocalLemmaCondition}
		P^*(A) \leq \alpha z(A)\prod_{B \in \nbr(A)} \left(1 - z(B)\right).
	\end{equation}
	
	Then, there is a computable valuation of $\mc{x}$ which makes each $A \in \mc{A}$ true. 
\end{Theorem}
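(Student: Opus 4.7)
The plan is to derandomize the modified resample algorithm of Theorem~\ref{ALLLL}, using the slack factor $\alpha$ to make the Moser-tree probability bounds quantitatively computable. The overall strategy mirrors how Rumyantsev and Shen~\cite{Rumyantsev2013} derive a computable LLL from Moser--Tardos.

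First I would show that, when the modified resample algorithm is run with genuinely random coins, the sequence of valuations $R_0, R_1, \dots$ has an almost sure pointwise limit $R_\infty$ making every $A \in \mc{A}$ false. For each $x \in \mc{X}$, the set $\mc{A}_x := \{A \in \mc{A} : x \in \rsp(A)\}$ is finite by hypothesis, and a refinement of the Moser-tree analysis underlying Theorem~\ref{ALLLL} gives per-event resampling bounds: each $A \in \mc{A}_x$ is resampled only finitely often almost surely. A countable-intersection argument over $x$ then yields $R_\infty$ almost surely. To see $R_\infty$ falsifies every $A$, suppose otherwise; once every variable in $\vbl(A)$ has stabilized, $A$ would be permanently true, so from some stage onward the $\prec$-least false event is $\preceq A$; the algorithm would then eventually process some event in $\nbr^+(A)$---the only events whose processing can affect $\vbl(A)$, by the condition that $\rsp(B) \cap \vbl(A) = \emptyset$ whenever $B \gg A$---which resamples a variable of $\vbl(A)$, contradicting stabilization.

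Next I would use the slack $\alpha$ to upgrade this to an effective argument. Applying the Moser-tree analysis with the strengthened bound $P^*(A) \leq \alpha z(A) \prod_{B \in \nbr(A)}(1-z(B))$ yields probability estimates of the shape $\Pr(A \text{ is resampled at least } k \text{ times}) \leq C_A \alpha^k$ with $C_A$ computable from the problem data, giving geometric tails. Combined with the computably presented rational distributions on $\mc{X}$ and the computable order $\prec$, this lets us compute, to any desired accuracy, the probability of specified stabilization events on finite sets of variables within any given stage bound. I would then construct a computable infinite random source $\rho$ and computable stage bounds $0 = T_0 < T_1 < \cdots$ inductively: given $\rho \harpoon T_n$ such that with computable positive probability a random extension produces a run in which $x_1, \dots, x_n$ are never resampled after stage $T_n$, enumerate finite continuations of $\rho \harpoon T_n$ and select one that extends this stabilization to $x_{n+1}$ with positive probability, recording the new stage bound as $T_{n+1}$. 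Such a continuation must exist because the set of good extensions has the computable positive measure just discussed, and the $\alpha$-driven geometric bounds make the search halt in finite time. The limit $\rho$ is computable, and the valuation $x_i \mapsto R_{T_i}(x_i)$, a computable function of $i$, agrees with $R_\infty$ and so falsifies every event in $\mc{A}$.

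The main obstacle is the derandomization step: converting a positive-probability statement into a terminating search while maintaining consistency across all variables at once. The slack $\alpha$ is indispensable here---without it the LLLL bounds would be only positive rather than computably positive, and the bounded search used to commit to each $T_{n+1}$ could not be made effective. The geometric tails coming from $\alpha$ are precisely what makes the whole construction uniformly computable.
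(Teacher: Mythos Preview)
Your overall strategy matches the paper's: run the modified resample algorithm with random coins, show the output stabilizes almost surely to a valuation falsifying every event, then derandomize using the $\alpha$-slack. The paper packages the last step through the Rumyantsev--Shen layerwise-computable-mapping framework (Propositions~\ref{lwcm} and~\ref{compelmnt}) rather than your explicit inductive construction of $\rho$ and the $T_n$, but those are equivalent in spirit.

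The gap is in the quantitative input to the derandomization. The estimate you isolate, $\Pr(A \text{ resampled} \geq k \text{ times}) \leq C_A \alpha^k$, controls the \emph{count} of resamplings of $A$ but not the \emph{stage} by which they stop; nothing prevents $A$ from being resampled only once but at an arbitrarily late stage, so this tail alone does not certify that your search for a suitable $T_{n+1}$ halts or that the conditional stabilization probability is computably bounded away from zero. What is actually required is a computable $M(A,\delta)$ with $\Pr(A\text{ resampled after stage }M(A,\delta)) < \delta$. The paper obtains this in two pieces. First, choose $m$ with $\alpha^{m}\tfrac{z(A)}{1-z(A)} \leq \delta/2$, let $\mc{B}$ be the $\prec$-downward closure of the radius-$m$ ball about $A$ in the $\nbr$-graph, and use the expectation bound $\mathbb{E}(\tau_{\mc{B}})\leq\sum_{B\in\mc{B}}\tfrac{z(B)}{1-z(B)}$ from Theorem~\ref{ALLLL} together with Markov's inequality to get a stage $s$ with $\Pr(\tau_{\mc{B}}>s)<\delta/2$. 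Second, show via repeated application of Proposition~\ref{LoseProgressAlongNeighbors} that any resampling of $A$ at a stage past $\tau_{\mc{B}}$ forces a neighbor-chain of length $\geq m$ in the log, hence a Moser tree rooted at $A$ of size $\geq m$, which by part~2 of Lemma~\ref{sumMoserTreeBound} has probability at most $\alpha^{m}\tfrac{z(A)}{1-z(A)}\leq\delta/2$. Your proposal has something like the second ingredient but is missing the first, and without it the claim that ``the $\alpha$-driven geometric bounds make the search halt in finite time'' is not justified.

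A smaller issue: your argument that $R_\infty$ falsifies every $A$ asserts that only events in $\nbr^+(A)$ can affect $\vbl(A)$, citing the condition $\rsp(B)\cap\vbl(A)=\emptyset$ for $B\gg A$. But that hypothesis says nothing about $B\ll A$, and such $B$ may well have $\rsp(B)\cap\stc(A)\neq\emptyset$. The simpler correct argument is the paper's: if $A$ remained true then from some stage on the $\prec$-least bad event is always $\preceq A$; since $\prec$ has order type $\omega$ there are only finitely many such events, so one of them is resampled infinitely often, contradicting the per-event resampling bound.
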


Note that, while the LLLL (Theorem~\ref{LLLL}) allows its neighborhood graph to be directed, the graph induced by $\nbr$ in Theorem~\ref{CLLLL} is always undirected. As a result, applications of Theorem~\ref{CLLLL} typically involve neighborhood sets twice as large as those in applications of Theorem~\ref{LLLL}. In extremal applications, this is not significant. However, when optimization is desired, we end up with a gap between constructive and non-constructive versions of the same theorem (see Theorem~\ref{ComputableThueChoice}). Additionally, note that Condition~\ref{PStarCondition} is somewhat stronger than the corresponding condition in the LLLL, although this difference has not yet been relevant in any applications.  

The constant $\alpha$ in Condition~\ref{LeftLocalLemmaCondition} is technically important but also is not an obstacle in any known applications. 

\section{Analysis of the Modified Resample Algorithm} 

\subsection{The Log of the Resample Algorithm and Moser Trees} 

We begin the proof of Theorem \ref{ALLLL} with analysis of the sequence of events resampled by the modified resample algorithm, henceforth referred to as the resample algorithm. 

\begin{Definition}
	A sequence $E_1, E_2, \dots $ of events from $\mc{a}$ is a legal log if, for each $1\leq i < n$, we have that $E_i \not\gg E_{i+1}$. 
\end{Definition}

Legal logs characterize the logs of the resample algorithm. 

\begin{Claim}
	Every sequence of events resampled by the resample algorithm is a legal log. 
\end{Claim}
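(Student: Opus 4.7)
The plan is to argue by contradiction: suppose some consecutive pair $E_i, E_{i+1}$ in the log satisfies $E_i \gg E_{i+1}$, and derive that a variable in $\rsp(E_i)$ must lie in $\vbl(E_{i+1})$, which contradicts the standing assumption that $A \gg B$ and $A \not\in \nbr(B)$ imply $\rsp(A) \cap \vbl(B) = \emptyset$.

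In more detail, first I would unpack what $E_i \gg E_{i+1}$ means: $E_{i+1} \prec E_i$ and $E_i \not\in \nbr(E_{i+1})$. Then I would use the definition of stage $i$: the algorithm selects $E_i$ as the $\prec$-least event in $\mc{A}$ that is false under $R_{i-1}$. Since $E_{i+1} \prec E_i$, the event $E_{i+1}$ must be true under $R_{i-1}$. Next, from the definition of stage $i+1$, the event $E_{i+1}$ is false under $R_i$. So the truth value of $E_{i+1}$ flipped between $R_{i-1}$ and $R_i$.

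The only variables whose values differ between $R_{i-1}$ and $R_i$ are those in $\rsp(E_i)$. Because $E_{i+1} \in \sigma(\vbl(E_{i+1}))$, a change in the truth of $E_{i+1}$ forces some variable in $\rsp(E_i) \cap \vbl(E_{i+1})$ to have been altered; in particular this intersection is nonempty. But the hypotheses on $\ll$ and $\rsp$ stated just before Proposition~\ref{LefthandedProps} say exactly that $E_i \gg E_{i+1}$ together with $E_i \not\in \nbr(E_{i+1})$ forces $\rsp(E_i) \cap \vbl(E_{i+1}) = \emptyset$, contradicting the previous sentence.

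I do not expect any genuine obstacle: every ingredient is already stated in the setup, and the argument is essentially a one-step combination of the stage selection rule with the ``static variables are low priority'' hypothesis. The only minor care needed is to note that the base case $i=1$ works the same way (using $R_0$ in place of $R_{i-1}$), and to be explicit that the truth value of $E_{i+1}$ cannot change unless some variable it depends on is resampled, which is where $E_{i+1} \in \sigma(\vbl(E_{i+1}))$ enters.
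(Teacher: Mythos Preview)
Your proposal is correct and follows essentially the same contradiction argument as the paper: assume $E_i \gg E_{i+1}$, use the $\prec$-minimality rule at stage $i$ to see that $E_{i+1}$ had the opposite truth value under $R_{i-1}$ than under $R_i$, and conclude that some variable in $\rsp(E_i)\cap\vbl(E_{i+1})$ was changed, contradicting the standing hypothesis that this intersection is empty. Your write-up is in fact slightly cleaner than the paper's, which contains a sign slip (it writes $E_i \prec E_{i+1}$ where $E_{i+1}\prec E_i$ is meant) and a corresponding true/false swap.
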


\begin{proof}
	Let $E_1, \dots, E_n$ be an initial segment of the log of a run of the resample algorithm. Suppose that $E_i \gg E_{i+1}$. Then $\rsp(E_{i}) \cap \vbl(E_{i+1}) = \emptyset$. Also, since $E_i \prec E_{i+1}$, it must be that $E_{i+1}$ is false at stage $i$. But then, $E_{i+1}$ went from false to true after resampling $E_i$, which contradicts that $\rsp(E_{i}) \cap \vbl(E_{i+1}) = \emptyset$. 
\end{proof}

We will use the following fact. 

\begin{Proposition}
	\label{LoseProgressAlongNeighbors}
	Let $E_1, E_2, \dots, E_n$ be a legal log. Suppose that $E_i \succ E_j$ for some $i < j$. Then, there is $k$ with $i \leq k < j$ such that $E_k \in \nbr^+(E_j)$. 	
\end{Proposition}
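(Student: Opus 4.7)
The plan is a proof by contradiction: assume that for every $k$ with $i \leq k < j$, we have $E_k \notin \nbr^+(E_j)$. By the trichotomy in Proposition~\ref{LefthandedProps}(1), each such $E_k$ satisfies $E_k \gg E_j$ or $E_j \gg E_k$. At $k=i$ we are given $E_i \succ E_j$ together with $E_i \notin \nbr(E_j)$, so $E_i \gg E_j$. At $k=j-1$, legality forces $E_{j-1} \not\gg E_j$; combined with $E_{j-1} \notin \nbr^+(E_j)$, this yields $E_j \gg E_{j-1}$. So the dominance relation flips somewhere between indices $i$ and $j-1$.

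The next step is to locate the flip: let $k$ be the largest index in $[i,j-1]$ with $E_k \gg E_j$. The observation above shows $k \leq j-2$, so $E_{k+1}$ is well-defined and still in $[i,j-1]$; by maximality and trichotomy we have $E_j \gg E_{k+1}$. Now I invoke the legality of the log at the pair $(E_k,E_{k+1})$: $E_k \not\gg E_{k+1}$, i.e., either $E_k \in \nbr(E_{k+1})$ or $E_k \preceq E_{k+1}$.

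In the neighbor case, apply Proposition~\ref{LefthandedProps}(2) with $A = E_k$, $B = E_{k+1}$, $C = E_j$: since $\nbr$ is symmetric we have $E_{k+1} \in \nbr^+(E_k)$, and since $E_k \gg E_j$ we have $E_j \ll E_k$. The proposition delivers $E_j \prec E_{k+1}$, contradicting $E_j \gg E_{k+1}$. In the other case, $E_k = E_{k+1}$ is immediately ruled out (it would give $E_k \gg E_j \gg E_k$), so $E_k \prec E_{k+1}$ strictly and, not being neighbors, $E_k \ll E_{k+1}$; combined with $E_{k+1} \ll E_j$ (which is just $E_j \gg E_{k+1}$ unpacked) and the transitivity of $\ll$ from Proposition~\ref{LefthandedProps}(3), I get $E_k \ll E_j$, again contradicting $E_k \gg E_j$.

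The main conceptual obstacle is just bookkeeping: one must be confident that the ``flip index'' $k$ falls strictly before $j-1$ (so that the crucial legality step $E_k \not\gg E_{k+1}$ is available), and that the two cases coming from the definition of $\gg$ line up exactly with an application of Proposition~\ref{LefthandedProps}(2) on one hand and transitivity of $\ll$ on the other. Once the cases are set up symmetrically, each case is a one-line deduction; no new combinatorial ingredient beyond the three parts of Proposition~\ref{LefthandedProps} and the definition of legality is needed.
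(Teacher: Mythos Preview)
Your proof is correct and follows essentially the same idea as the paper's: locate the index where the relation to $E_j$ flips between $\gg$ and $\ll$, then use legality together with transitivity of $\ll$ to derive a contradiction (the paper frames this directly rather than by contradiction, taking the \emph{least} $s$ with $E_s \ll E_j$ instead of your largest $k$ with $E_k \gg E_j$). One simplification: your case split on whether $E_k \in \nbr(E_{k+1})$ is unnecessary, since from $E_k \gg E_j$ and $E_j \gg E_{k+1}$ transitivity of $\ll$ (Proposition~\ref{LefthandedProps}(3)) already yields $E_k \gg E_{k+1}$, contradicting legality in one stroke.
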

\begin{proof}
	If $E_i \in \nbr^+(E_j)$ then we are done. Otherwise, $E_i \gg E_j$. Since $E_1,\dots, E_n$ is a legal log, there is some $k$ (in particular, $k = j-1$) such that $i < k < j$  and $E_k \not \gg E_j$. If $E_k \in \nbr^+ (E_j)$, we are done. Otherwise, $E_k \ll E_j$.  Then, we can fix the least $s > i$ such that $E_s \ll E_j$. Consider $E_{s-1}$. By minimality of $S$ and the fact that $E_i \gg E_j$, we have that $E_{s-1} \not \ll E_j$. Suppose that $E_{s-1} \gg E_j$. Since $E_j \gg E_s$, we have that $E_{s-1} \gg E_s$, contradicting that $E_1,\dots,E_n$ is a legal log. So, $E_{s-1} \in \nbr^+(E_j)$. 
\end{proof}

If we think of resampling $\prec$-greater events as ``progress'' in the resample algorithm then Proposition \ref{LoseProgressAlongNeighbors} tells us that, if we somehow lose progress, it must be due to resampling a neighbor. Keeping track of how we have lost progress is the core of the present argument. We record lost progress using labeled trees. 

\begin{Definition}
	Let $E_1,\dots E_n$ be a legal log. The Moser tree generated by $E_1,\dots, E_n$ is a finite tree $T$ with labels in $\mc{a}$ constructed as follows. 
	
	We start at stage $n$ and count backwards until stage $1$, defining $T_i$ at stage $i$, with $T = T_1 = \bigcup_{i \leq n}T_i$. For a node $x$, let $[x] \in \mc{a}$ denote the label of $x$ and let $d(x)$ denote depth of $x$, meaning the distance of $x$ to the root
	
	Stage $n$: $T_n$ has one vertex labeled $E_n$. 
	
	Stage $k-1$: Check if there is some $x \in T_k$ with $E_{k-1} \in \nbr([x])$. If not, then $T_{k-1} = T_k$. If there is, then pick an element $y \in \{x \in T_k: [x] \in \nbr(E_{k-1})\}$ of maximal depth. Let $T_{k-1}$ be $T_k$ together with a new child of $y$ labeled $E_{k-1}$. 
	
	When we refer to the collection of Moser trees, we mean the set of labeled trees which are generated by some finite legal log. 
\end{Definition}

	For $x \in T$, let $q(x)$ be the stage at which $x$ was added to $T$, that is, $\left(T_{q(x)} \setminus T_{q(x)-1}\right) = \{x\}$. We also abuse notation to let $r = |T|$ and also to let $r$ be the root node of $T$, e.g., $q(v_r) = q(r)$. 
	We state some basic properties of Moser trees.	
	\begin{Proposition}
		\label{BasicTreeProps}
		Let $E_1,\dots, E_n$ be a legal log. Then, the Moser tree $T$ generated by $E_1,\dots, E_n$ has the following properties. 
		\begin{enumerate}
			\item Suppose that $x,y \in T$, $[x] \in \nbr^+([y])$ and $x \neq y$. Then $d(x) \neq d(y)$. Furthermore, $d(x) > d(y)$ if and only if $q(x) < q(y)$. 
			\item $T$ is not the Moser tree generated by $E_1,\dots, E_m$ for any $m \neq n$. 
			\item Let $m < n$ and $R$ be the Moser tree generated by $E_r, E_{r+1},\dots, E_n$. Then, $R$ is a subset of  $T$ as a labeled tree. 
		\end{enumerate}
	\end{Proposition}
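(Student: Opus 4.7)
The plan is to prove all three parts by a single induction on the length of the generating legal log, exploiting that the Moser tree is built incrementally in reverse. Writing $T^- = T_2$ for the Moser tree of the shorter legal log $E_2, E_3, \dots, E_n$, the full tree $T = T_1$ is obtained from $T^-$ in at most one construction step: either no node of $T^-$ has label in $\nbr(E_1)$ and $T = T^-$, or a single new node $u$ labeled $E_1$ is appended as a child of the deepest $y^* \in T^-$ with $[y^*] \in \nbr(E_1)$. In the former case each claim is immediate, so I focus on the latter.

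For Property~1, the pairs contained entirely in $T^-$ are handled by the induction hypothesis, so only the new pairs $(u,z)$ with $z \in T^-$ and $[z] \in \nbr^+([u])$ must be analyzed; since $q(u) = 1 < q(z)$ for every such $z$, the goal reduces to showing $d(u) > d(z)$. If $[z] \in \nbr([u])$, then $z$ itself belongs to the candidate set from which $y^*$ is chosen, so $d(y^*) \geq d(z)$ and therefore $d(u) = d(y^*) + 1 > d(z)$. The delicate case is $[z] = [u] = A$: the node $z$ is not a candidate for $u$'s parent, and its parent $p(z)$ only yields $d(p(z)) = d(z) - 1$. The plan here is to exhibit a different witness $w \in T^-$ with $[w] \in \nbr(A)$ and $d(w) \geq d(z)$. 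Among the log entries $E_2, \dots, E_{q(z)-1}$ between the two occurrences of $A$, the legal-log condition combined with Proposition~\ref{LefthandedProps} and Proposition~\ref{LoseProgressAlongNeighbors} should force at least one index $j$ with $E_j \in \nbr(A)$; when step $j$ was processed in the construction of $T^-$, the node $z$ was already present and belonged to the candidate set, so the node added at step $j$ has depth at least $d(z) + 1$ with label in $\nbr(A)$, providing the desired $w$. Pinning down this intervening-neighbor claim is the main technical obstacle and is where the lefthanded order condition really enters.

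Property~3 then follows by a definitional unwinding: because each construction step depends only on the current tree and the event being processed, the Moser tree generated by the suffix $E_m, E_{m+1}, \dots, E_n$ is built by exactly the same operations as the first $n - m + 1$ steps in the construction of $T$, hence equals $T_m$; as $T_m \subseteq T_1 = T$ as labeled trees, the claim is immediate. Property~2 I would deduce from Properties~1 and~3 by an injectivity argument: comparing roots forces $E_m = E_n$ in any potential coincidence, and Property~1 together with the deterministic construction rule lets one read off the generating log from a Moser tree by identifying the minimum-$q$ node and peeling it off, so two legal logs of different lengths cannot give rise to the same labeled tree.
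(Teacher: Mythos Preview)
Your careful reading caught a real issue: the Moser-tree construction as written uses $\nbr$ rather than $\nbr^+$, and under that reading the case $[z]=[u]$ is genuinely problematic. Unfortunately the gap is not repairable --- under the literal $\nbr$ reading, Property~1 is actually \emph{false}. Take $A\in\nbr(B)$ and the legal log $A,A,B,A$: the tree has root labeled $A$, a child labeled $B$, and that child then acquires \emph{two} children labeled $A$ at the same depth $2$ (since neither $A$-node is a candidate parent for the other). Your intervening-neighbor plan therefore cannot succeed; in this example the two consecutive $A$'s at positions $1$ and $2$ have no $\nbr(A)$-entry between them. The paper's own proof of Part~2 invokes ``$[r]\in\nbr^+([r])$'', which only makes sense if the construction rule is meant to use $\nbr^+$; with $\nbr^+$ your delicate case evaporates, since $z$ is itself a candidate parent for $u$ and $d(u)>d(z)$ follows directly.

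Beyond that typo, the paper's arguments are shorter than your inductive scheme. For Part~1 it argues directly, with no induction on log length: if $q(x)<q(y)$ then $y\in T_{q(x)+1}$, and since $[y]\in\nbr^+([x])$ the node $y$ is a valid candidate parent at stage $q(x)$, so $d(x)>d(y)$; symmetry gives the converse, and together these yield $d(x)\neq d(y)$. For Part~3 your argument is exactly the paper's ($R=T_m\subseteq T$). For Part~2 the paper counts rather than peels: under $\nbr^+$, every occurrence of $[r]$ in the log adds a node labeled $[r]$ (the root is always a candidate), so the number of $[r]$-labeled nodes in $T$ equals the number of occurrences of $[r]$ in $E_1,\dots,E_n$; since $E_n=[r]$, this count strictly exceeds that of any proper prefix, forcing $m=n$. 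Your peeling sketch would need extra work because most log entries leave no node in $T$, so the tree alone does not determine $n$ without the counting observation.
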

	\begin{proof}
		\begin{enumerate}
			\item Suppose that $x \neq y$, $[x] \in \nbr^+([y])$ and $q(x) < q(y)$. Then, $y \in T_{q(x)+1}$, so $x$ is added as a child of a node at least as deep as $y$. Hence $d(x) > d(y)$. By a symmetric argument, we obtain that $q(y) < q(x)$ implies $d(x)>d(y)$. Since $x \neq y$ implies that $q(x) \neq q(y)$, we obtain that $d(x) \neq d(y)$. 
			\item Recall that $r \in T$ is the root vertex of $T$. If $T$ were also the Moser tree associated with $E_1,E_2,\dots, E_m$, then $E_m = E_n = [r]$. We also have that $[r] \in \nbr^+([r])$, so there is a one to one correspondence between each instance of $[r]$ in $E_1,\dots,E_n$ and each node in $T$ labeled $[r]$. The same holds for $E_1,\dots,E_m$. Hence, $E_1,\dots,E_m$ has the same number of occurrences of $[r]$ as $E_1,\dots,E_n$. Since $E_n = [r]$, this implies that $n = m$. 
			\item $R \subset T$ because $R = T_m \subset T$. 
		\end{enumerate}
	\end{proof}

	Requiring that the resample algorithm resamples the $\prec$-least bad event fixes the order in which vertices are added to Moser trees; the most recently added vertex to a Moser tree $T$ is the $\prec$-least vertex which is of maximal depth among its neighbors. To be precise, let $M(T) = \{v \in T: d(v) = \max(\{d(w): [w] \in \nbr^+([v])\})\}$ be the set of vertices which are of maximal depth among their neighbors. By Proposition \ref{BasicTreeProps}, we have that for each $v,w \in M(T)$, if $v \neq w$ then $[v] \neq [w]$. Therefore, it is well defined to let $G(T) = v$ such that $[v] = \min_{\prec}({[w]: w \in M(T)})$.
	
	Let $v_1,v_2,\dots,v_{r}$ be defined recursively by $v_1 = G(T)$ and $v_{i+1} = G(T \setminus \{v_1,\dots,v_{i}\})$. 
	
	\begin{Claim}
		\label{OrderClaim}
		Suppose $T$ is the Moser tree generated by $E_1,\dots,E_n$. Then, for each $0 \leq i < r$, we have that
		\[
			q(v_{i+1}) = \min(\{q(v_j): j > i\}).
		\]
		Thus, we conclude that
		\[
			q(v_1) < q(v_2) < \dots < q(r).
		\]
	\end{Claim}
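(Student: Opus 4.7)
The plan is to prove by induction on $|T|$ that $v_1 = G(T)$ equals the (unique) vertex $v^* \in T$ with smallest $q$-value. Once this is established, the claim follows easily, because $T \setminus \{v_1\}$ is exactly $T_{m+1}$ (where $m = q(v^*)$), which is itself a Moser tree---namely the one generated by the sub-log $E_{m+1}, \ldots, E_n$---with the same $q$-values; applying the inductive hypothesis to this smaller tree then yields $q(v_2) < q(v_3) < \cdots < q(v_r)$ together with the corresponding minimum characterizations.

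To show $v_1 = v^*$, first note that $v^* \in M(T)$: for any $w \neq v^*$ in $T$ with $[w] \in \nbr^+([v^*])$, we have $q(w) > q(v^*)$, hence $d(w) < d(v^*)$ by Proposition~\ref{BasicTreeProps}(1), so $v^*$ is strictly the deepest such vertex. The main task is to verify that $[v^*]$ is $\prec$-least in $\{[w] : w \in M(T)\}$. Suppose for contradiction that $w \in M(T)$ with $w \neq v^*$ satisfies $[w] \prec [v^*]$; set $k = q(w)$, so that $[w] = E_k$ and $k > m$. First, $[v^*] \notin \nbr^+([w])$: otherwise $v^* \in T$ would witness a vertex in $\nbr^+([w])$ strictly deeper than $w$, contradicting $w \in M(T)$.

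The heart of the argument is an iterative application of Proposition~\ref{LoseProgressAlongNeighbors}. I construct a strictly decreasing sequence $k = k_0 > k_1 > k_2 > \cdots \geq m$ with $E_{k_j} = [w]$ for every $j \geq 0$. Given $k_j > m$ with $E_{k_j} = [w]$, applying Proposition~\ref{LoseProgressAlongNeighbors} to $E_m = [v^*] \succ [w] = E_{k_j}$ in the legal log $E_1, \ldots, E_n$ produces some $m \leq k_{j+1} < k_j$ with $E_{k_{j+1}} \in \nbr^+([w])$. If $k_{j+1} = m$ then $E_m = [v^*] \in \nbr^+([w])$, a contradiction. If some vertex $w' \in T$ has $q(w') = k_{j+1}$, then $[w'] = E_{k_{j+1}} \in \nbr^+([w])$ and $q(w') < q(w)$ force $d(w') > d(w)$ by Proposition~\ref{BasicTreeProps}(1), again contradicting $w \in M(T)$. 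Otherwise no vertex was added at stage $k_{j+1}$ of the backward construction; but since $w \in T_{k_{j+1}+1}$ has label $[w]$, any $E_{k_{j+1}} \in \nbr([w])$ would have forced a vertex to be added at that stage, so we must have $E_{k_{j+1}} = [w]$, allowing the iteration to continue. Since the $k_j$ are strictly decreasing positive integers bounded below by $m$, the process must halt in one of the contradictory subcases.

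The hard part will be this iterative argument, particularly the subcase in which $E_{k_{j+1}} = [w]$ is a repeated resample of the same event that fails to appear as a tree vertex; termination---and hence the eventual contradiction---relies both on the strict decrease of the $k_j$ and on the lower bound $k_{j+1} \geq m$ supplied by Proposition~\ref{LoseProgressAlongNeighbors}.
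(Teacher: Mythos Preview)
Your proof is correct and follows essentially the same route as the paper's: show by induction that the vertex of minimal $q$-value in the current subtree coincides with its $G$, using Proposition~\ref{LoseProgressAlongNeighbors} to exhibit a deeper neighbor of any rival $w \in M$. The paper packages this as strong induction on $i$ and applies Proposition~\ref{LoseProgressAlongNeighbors} only once; your iterative descent through indices with $E_{k_j}=[w]$ is extra caution that becomes vacuous once one reads the Moser-tree construction (consistently with Proposition~\ref{BasicTreeProps}(2)) as attaching a vertex whenever $E_{k-1}\in\nbr^+([x])$ for some existing $x$, since then $w\in T_{k_1+1}$ together with $E_{k_1}\in\nbr^+([w])$ already forces a new vertex at stage $k_1$.
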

	\begin{proof}
		We proceed by strong induction on $i$. Suppose the claim is true for each $j < i$. Denote $K = T \setminus \{v_1,\dots,v_i\}$.		
		Let $v \in K$ be such that $q(v) = \min(\{q(w): w\in K\})$. We show that $v = G(K)$. Since $q(v)$ is minimal, $d(v)$ is maximal among the nodes whose labels are neighbors of $[v]$ by Proposition \ref{BasicTreeProps}, so $v \in M(K)$. It remains to show that $[v]$ is $\prec$-minimal in $\{A: (\exists w \in M(K))([w] = A)\}$. 
		
		Suppose $w \in M(K)$, $v \neq w$ and $[v] \not\prec [w]$. Since both $v,w \in M(K)$, we have that $[v] \neq [w]$. Hence, $[w] \succ [v].$ Since $q(v) < q(w)$, Proposition \ref{LoseProgressAlongNeighbors} says there is $k$ with $q(v) \leq k < q(v)$ such that $E_k \in \nbr^+{[v]}$. Then, a node $x$ is added to $K$ at stage $k$ of the Moser tree construction with $d(x) > d(w)$ and $[x] = E_k$. This contradicts that $w \in M(K)$.  
	\end{proof}
	
	The next lemma can be interpreted as saying that the Moser tree indeed records every instance of losing progress on the way to resampling $E_n$. 
	
	\begin{Proposition}
		\label{MoserTreeDoubleLessThan}
		Suppose legal log $E_1,\dots,E_n$ produces Moser tree $T$. Fix $i$ and $k$ such that $0 \leq i < r$ and $q(v_i) < k < q(v_{i+1})$ (let $q(v_0) = 0$). Then, $E_k \ll [v_j]$ for all $r \geq j > i$. 
	\end{Proposition}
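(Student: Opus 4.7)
The plan is to establish both halves of $E_k \ll [v_j]$ by exploiting what the Moser tree construction is forced to do at stage $k$. By Claim~\ref{OrderClaim} the $q$-values on vertices of $T$ are exactly $\{q(v_1) < q(v_2) < \dots < q(v_r)\}$, so $T_{k+1} = \{v_{i+1}, \dots, v_r\}$ and the hypothesis $q(v_i) < k < q(v_{i+1})$ means that no vertex is added to $T$ at stage $k$ (any such vertex would have $q$-value $k$, contradicting strict monotonicity). By the Moser tree construction rule, this forces $E_k \notin \nbr([x])$ for every $x \in T_{k+1}$, which immediately gives $E_k \notin \nbr([v_j])$ for all $j > i$.

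For the strict priority statement $E_k \prec [v_j]$, I would argue by contradiction, choosing the smallest $j > i$ with $E_k \not\prec [v_j]$. Combined with the previous step and the trichotomy of Proposition~\ref{LefthandedProps}, this forces either $E_k = [v_j]$ or $E_k \gg [v_j]$. The equality case is handled by a parent argument: $v_j$ has a parent $y \in T$ with $q(y) > q(v_j) > k$, so $y \in T_{k+1}$, and by the Moser tree construction $[y] \in \nbr([v_j]) = \nbr(E_k)$, again contradicting that no vertex was added at stage $k$. For the case $E_k \gg [v_j]$, Proposition~\ref{LoseProgressAlongNeighbors} applied to the sub-log $E_k, \dots, E_{q(v_j)}$ yields an intermediate stage $\ell$ with $k \leq \ell < q(v_j)$ and $E_\ell \in \nbr^+([v_j])$; the case $\ell = k$ is impossible since $E_k \gg [v_j]$ entails $E_k \notin \nbr^+([v_j])$.

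For $\ell > k$ there are two sub-cases. If $E_\ell \in \nbr([v_j])$, then since $v_j \in T_{\ell+1}$ a new tree vertex $v_p$ is added at stage $\ell$ with $[v_p] = E_\ell$ and $i < p < j$ by monotonicity of the $q$-values; minimality of $j$ then gives $E_k \ll [v_p] = E_\ell$, and Condition~\ref{LefthandedRule} with $(A, B, C) = (E_\ell, [v_j], E_k)$ forces $E_k \succ E_\ell$, contradicting $E_k \prec E_\ell$. If instead $E_\ell = [v_j]$, then either a tree vertex $v_p$ is added at stage $\ell$ with $[v_p] = [v_j]$ and $i < p < j$---so minimality gives $E_k \ll [v_j]$, contradicting $E_k \gg [v_j]$---or no vertex is added, in which case I would descend by reapplying Proposition~\ref{LoseProgressAlongNeighbors} to $(k, \ell)$ in place of $(k, q(v_j))$ and iterating. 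The descent strictly decreases the witness stage and is bounded below by $k$, so it terminates in finitely many steps at one of the already handled cases. The main technical obstacle is managing this inner descent cleanly, because $[v_j]$ may occur many times in the log between stages $k$ and $q(v_j)$ without producing a tree vertex; the cleanest organization is an outer induction on $j$ together with an inner induction on $q(v_j) - k$.
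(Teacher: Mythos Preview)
Your approach is the same as the paper's: split via the trichotomy of Proposition~\ref{LefthandedProps} and invoke Proposition~\ref{LoseProgressAlongNeighbors} in the $\gg$ case. Your minimal-$j$ device is in fact more careful than the paper's terse Case~2, which writes $\ell < q(v_{i+1})$ where Proposition~\ref{LoseProgressAlongNeighbors} only literally yields $\ell < q(v_j)$; your use of minimality together with Condition~\ref{LefthandedRule} (equivalently Proposition~\ref{LefthandedProps}(2)) is exactly what closes the argument for general $j > i$.

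There is one genuine gap: in the case $E_k = [v_j]$ your parent argument fails when $v_j$ is the root, and your descent in the $E_\ell = [v_j]$ sub-case inherits the same issue. This gap is not repairable under the Moser tree construction exactly as typeset (with $\nbr$ rather than $\nbr^+$): for the legal log $B,B$ on a single event $B$, the tree is just the root and the proposition would assert $B \ll B$. The paper itself reads the construction with $\nbr^+$ here---its Case~1 asserts that $E_k \in \nbr^+([v_j])$ already forces a new node at stage $k$, and the bijection in Proposition~\ref{BasicTreeProps}(2) relies on the same reading. Under that intended reading your first step yields $E_k \notin \nbr^+([v_j])$ for every $j > i$, the equality case and the entire descent argument disappear, and your minimality argument for $E_k \gg [v_j]$ finishes the proof cleanly.
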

	\begin{proof}
		Suppose that $E_k \not \ll [v_j]$ for some $j > i$. There are two cases. 
		
		Case 1: Suppose $E_k \in \nbr^+([v_j])$. Then, there is a new node added at stage $k$ of constructing the Moser tree. Since, $q(v_i) < k < q(v_{i+1})$ this contradicts Claim \ref{OrderClaim}. 
		
		Case 2: Suppose that $E_k \gg [v_i]$. Then, by Proposition \ref{LoseProgressAlongNeighbors}, there is $\ell$ with $k < \ell < q(v_{i+1})$ such that $E_\ell \in \nbr^+([v_j])$. Since $q(v_i) < \ell < q(v_{i+1})$ this also contradicts Claim \ref{OrderClaim}.
	\end{proof}
	
	\subsection{The T-check} 
	
		We now describe a random process which we will use in a coupling argument with the resample algorithm. 
	\begin{Definition}
		Let $T$ be a Moser tree. Like the resample algorithm, at each stage $s$, the $T$-check produces a valuation $\wh{R}_s$ of the variables in $\mc{x}$. The $T$-check also records a log $\wh{E}_1,\wh{E}_2,\dots$ of events resampled by the $T$-check. For each $x \in \mc{x}$, let $\{\wh{R}_0(x): x \in \mc{x}\}$ be a set of mutually independent random samplings of the variables in $\mc{x}$. The $T$-check proceeds in a series of $T \setminus \{v_1,\dots, v_i\}$-subchecks for each $0 \leq i \leq r$, beginning with the $T$-subcheck and stage number $0$. The subchecks are defined as follows. 
		
		For $i \neq r$, let $P = T \setminus \{v_1,\dots,v_i\}$ and let $\mc{a}_{\ll P} = \{A \in \mc{a}:(\forall v \in P) (A \ll [v])\}$. Suppose the $P$-subcheck starts at stage $s$. Then the $P$-subcheck runs the resample algorithm starting with the valuation $\wh{R}_s$ until each $A \in \mc{a}_{\ll P}$ is good. Whenever a stage of the resample algorithm is completed at stage $t$ of the $T$-check, the $P$-subcheck updates the log by setting $\wh{E}_t$ equal to the event which was resampled and then increments the stage number by $1$. If $\wh{R}_s$ makes each $A \in \mc{a}_{\ll P}$ good, then the $P$-subcheck resamples $[G(P)]$, sets $E_s = [G(P)]$ and then increments $s$ by one. Then, the $T$-check begins the $P \setminus \{G(P)\}$-subcheck.
		
		The $\emptyset$-subcheck halts. 
		
		For each $v_i \in T$, let $\hat{q}(v_i)$ be the stage in which the $T$-check begins the $T \setminus \{v_1,\dots,v_{i}\}$-subcheck. If this never happens, then $\hat{q}(v_i) = \infty.$ We say that the $T$-check passes if and only if, for each $v_i \in T$, we have that $\hat{q}(v_i) < \infty$ and the valuation $\hat{R}_{\hat{q}(v_i)}$ makes the event $[v_i]$ bad. In other words, the $T$-check passes when each $\hat{q(v_i)}$ is finite and $[v_i]$ is bad \textit{before} it is resampled at stage $\hat{q}(v_i)$. 
	\end{Definition}
	
	Each log of the $T$-check is a legal log.
	
	\begin{Claim}
		Let $\hlog = \wh{e}_1,\wh{e}_2, \dots$ be the log of a $T$-check. Then, $\wh{E}_1,\wh{e}_2,\dots$ is a legal log.
	\end{Claim}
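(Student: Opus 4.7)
The plan is to partition the log by the subcheck structure: each $P$-subcheck consists of a (possibly empty) sequence of \emph{restricted resamplings} continuing until $\mc{a}_{\ll P}$ is entirely good, followed by a single \emph{transition resampling} of $[G(P)]$. I then verify $\wh{e}_t \not\gg \wh{e}_{t+1}$ in the four possible cases depending on which roles $\wh{e}_t$ and $\wh{e}_{t+1}$ play.

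For the easy cases: when $\wh{e}_t, \wh{e}_{t+1}$ are both restricted resamplings in the same $P$-subcheck, I recycle the argument from the earlier Claim that the resample algorithm produces a legal log---$\wh{e}_t \gg \wh{e}_{t+1}$ would force $\wh{e}_{t+1}$ to be good at stage $t$ (since the $\prec$-least bad event in $\mc{a}_{\ll P}$ was $\wh{e}_t$) and then to remain good at stage $t+1$ via $\rsp(\wh{e}_t) \cap \vbl(\wh{e}_{t+1}) = \emptyset$, contradicting its selection. When $\wh{e}_t$ is a restricted resampling and $\wh{e}_{t+1} = [G(P)]$ is the transition of the same subcheck, I obtain $\wh{e}_t \ll [G(P)] = \wh{e}_{t+1}$: some bad $A \in \mc{a}_{\ll P}$ exists (otherwise we would have already transitioned), and $\wh{e}_t \preceq A \ll [G(P)]$ yields $\wh{e}_t \not\gg \wh{e}_{t+1}$.

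For the case $\wh{e}_t = [G(P)]$ a transition and $\wh{e}_{t+1}$ a restricted resampling of the $P'$-subcheck (where $P' = P \setminus \{G(P)\}$), I assume $\wh{e}_t \gg \wh{e}_{t+1}$ for contradiction and split on whether $\wh{e}_{t+1}$ was bad at stage $t$. If it was good, disjointness of $\rsp([G(P)])$ and $\vbl(\wh{e}_{t+1})$ (forced by $\gg$) keeps it good through the transition, contradicting its selection. If it was bad, then since $\mc{a}_{\ll P}$ was entirely good at the transition we have $\wh{e}_{t+1} \in \mc{a}_{\ll P'} \setminus \mc{a}_{\ll P}$, and the failure of $\mc{a}_{\ll P}$-membership must come from $\wh{e}_{t+1} \not\ll [G(P)]$---which is exactly the negation of $[G(P)] \gg \wh{e}_{t+1}$.

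The main obstacle is the case of two consecutive transitions $\wh{e}_t = [v_{i+1}]$ and $\wh{e}_{t+1} = [v_{i+2}]$, which occurs when $\mc{a}_{\ll P'}$ is already entirely good at the start of the $P'$-subcheck. Here I would split on whether $v_{i+2} \in M(P)$. If so, $\prec$-minimality of $[v_{i+1}] = G(P)$ among labels in $M(P)$ yields $[v_{i+1}] \preceq [v_{i+2}]$. If not, then $v_{i+2} \in M(P') \setminus M(P)$ forces the removal of $v_{i+1}$ to have eliminated a vertex with label in $\nbr^+([v_{i+2}])$ and depth greater than $d(v_{i+2})$---so $v_{i+1}$ itself plays that role, giving $[v_{i+1}] \in \nbr^+([v_{i+2}])$ by Proposition~\ref{BasicTreeProps}. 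Either way $\wh{e}_t \not\gg \wh{e}_{t+1}$, and this structural analysis of how $M(\cdot)$ shrinks when $G(P)$ is removed is the step I expect to require the most care.
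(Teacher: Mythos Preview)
Your four-case decomposition is essentially the paper's argument made more explicit. The paper splits only on whether $i{+}1$ is a $\hat q$-stage: if not, it invokes ``logs of the resample algorithm are legal''; if so, it sub-splits on whether $i$ is also a $\hat q$-stage (its Case~1 is your Case~4, its Case~2 is your Case~2). Your Case~4 argument via $M(P)$ versus $M(P')$ is exactly the paper's Case~1 reasoning.

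One step in your Case~3 needs more care. In sub-case~B you write ``we have $\wh{e}_{t+1}\in\mc{A}_{\ll P'}\setminus\mc{A}_{\ll P}$.'' The exclusion $\wh{e}_{t+1}\notin\mc{A}_{\ll P}$ is fine (it was bad while $\mc{A}_{\ll P}$ was good), but the inclusion $\wh{e}_{t+1}\in\mc{A}_{\ll P'}$ is not immediate: the $P'$-subcheck runs the \emph{full} resample algorithm, selecting the $\prec$-least bad event in all of $\mc{A}$, not just in $\mc{A}_{\ll P'}$. Without that inclusion you cannot isolate $G(P)$ as the witness to $\wh{e}_{t+1}\notin\mc{A}_{\ll P}$, which is what you need to contradict $[G(P)]\gg\wh{e}_{t+1}$. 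Note also that, once you assume $[G(P)]\gg\wh{e}_{t+1}$, sub-case~A is vacuous (disjointness of $\rsp([G(P)])$ from $\vbl(\wh{e}_{t+1})$ forces $\wh{e}_{t+1}$ to have been bad before the transition), so sub-case~B carries the entire weight of Case~3.

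To be fair, the paper's one-line dismissal of this same configuration (``since all logs of the resample algorithm are legal logs, $i{+}1$ must be a $\hat q$-stage'') quietly assumes that stage $i$ was also a resample-algorithm step, which is precisely what fails in your Case~3. So your treatment is already at least as careful as the paper's; the point is just that this transition-then-resample case deserves a more explicit argument than either version currently gives.
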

	\begin{proof}
		
		Suppose $\wh{e}_{i+1} \ll \wh{e}_i$. Since all logs of the resample algorithm are legal logs, $i + 1$ must be one of the stages equal to $\hat{q}(v_j)$ for some $v_j \in T$ (as opposed to a non $\hat{q}$-stage in which the $T$-check mimics the resample algorithm). There are two cases:
		
		Case 1: $i = \hat{q}(v_{j-1})$. Then, $[v_{j-1}] \gg [v_j]$. Let $K = T \setminus \{v_1,\dots,v_{j-2}\}$. If $v_j \in M(K)$, then $\wh{e}_{i+1} = [v_j] \succeq [G(K)] = [v_{j-1}] = \wh{E}_{i}$, a contradiction. If $v_j \not \in M(K)$, then $v_{j-1}$ is a child of $v_j$, so $\wh{E}_{i} = v_{j-1} \in \nbr^+([v_{j}]) = \nbr^+(\wh{E}_{i+1})$, also a contradiction.
		
		Case 2: $i \neq \hat{q}(v_{j-1})$. Let $K = T \setminus \{v_1,\dots,v_{j-1}\}.$ Then, there must be some $A \in \mc{a}_{\ll K}$ which is bad at stage $i$. However, $A \ll [v_j] = \wh{e}_{i+1} \ll \wh{e}_i$, so the resample algorithm should have chosen to resample an event $\prec$-less than $A$ at stage $i$, a contradiction. 
	\end{proof}

	We will couple the $T$-check with the resample algorithm when run on the same random source. First, we specify our probability spaces. Let $\mc{x}' = \{x_i^j: i,j \in \mathbb{N}\}$ be a mutually independent set of random variables such that each $x_i^j$ is distributed identically to $x_i$. The set $\mc{x}'$ is essentially a collection of countably many copies of each variable in $\mc{x}$. Let $(\Omega, \Pr)$ be the probability space of all valuations of the variables in $\mc{x}$ and let $(\Omega', \Pr)$ be the probability space of all valuations of the variables in $\mc{x}'$. 
	
	Consider $S \in \Omega'$. We think of $S$ as a source for the randomness in either the $T$-check or the resample algorithm as follows.  
	For a given run of the resample algorithm, let $V_s(i)$ be the number of times $x_i$ has been resampled at the beginning of stage $s$. Similarly, let $\widehat{V}_s(i)$ be the number of times a given run of the $T$-check has been resampled at the beginning of stage $i$. Recall that, at stage $s$, the resample algorithm produces a valuation $R_s$ of the variables in $\mc{x}$ and the $T$-check produces a valuation $\wh{R}_s$. 
	
	We say that the resample algorithm is run with random source $S$ if, for each stage $s$ and variable $x_i \in \mc{x}$, we have that
	\[
		R_s(x_i) = S\left(x_i^{V_s(x_i)}\right).
	\]
	
	Similarly, we say that the resample algorithm is run with random source $S$ if, for each stage $s$ and variable $x_i \in \mc{x}$, we have that 
	\[
		\wh{R}_s(x_i) = S\left(x_i^{\wh{v}_s(x_i)}\right).
	\]
	
	Let $\log: \Omega' \to \mc{a}^{\leq \omega}$ be such that $\log(S) = E_1,E_2,\dots$ is the log of the resample algorithm when run with random source $S$. Let $\widehat{\log}: \Omega' \to \mc{a}^{\leq \omega}$ be such that $\widehat{\log}(S) = \hat{E}_1,\hat{E}_2,\dots$ is the log of the $T$-check when run with random source $S$. We also treat $q$, $\hat{q}$, $V_s$, $E_i$, $\wh{e}_i$ and $\widehat{V}_s$ as random variables. 
	
	\begin{Proposition}
		\label{SameTrees}
		Fix a Moser tree $T$. Suppose that the resample algorithm and the $T$-check are run using the same random source $S \in \Omega'$. Then, the $T$-check passes whenever an initial segment of the log of the resample algorithm produces $T$.
	\end{Proposition}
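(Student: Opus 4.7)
The plan is to argue by strong induction on $i$ that the $T$-check and the resample algorithm, both driven by the same random source $S$, agree on their logs and valuations through the stage at which $[v_i]$ is resampled. Because $v_r$ is the root of $T$ and $E_n = [v_r]$ by the Moser-tree construction, this synchronization will extend through stage $n$, and the pass condition will then follow because the resample algorithm only ever resamples an event that is bad under the current valuation.

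By Claim~\ref{OrderClaim} I have $q(v_1) < q(v_2) < \cdots < q(v_r) = n$ and $E_{q(v_i)} = [v_i]$. Writing $P_i = T \setminus \{v_1,\dots,v_{i-1}\}$, the $P_i$-subcheck is the one that ends by resampling $[G(P_i)] = [v_i]$. Assume inductively that the $T$-check and the resample algorithm have produced identical logs and valuations through the resampling of $[v_i]$, with the $T$-check poised to begin the $P_{i+1}$-subcheck. First, for every stage $k$ strictly between $q(v_i)$ and $q(v_{i+1})$, Proposition~\ref{MoserTreeDoubleLessThan} gives $E_k \ll [v_j]$ for all $j > i$, so $E_k \in \mc{a}_{\ll P_{i+1}}$. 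Since $E_k$ is bad under the shared valuation, the subcheck's halting condition fails and the $T$-check simply executes the next resample-algorithm step, whose $\prec$-least bad event is $E_k$; synchronization is preserved.

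Second, at stage $q(v_{i+1})$ the resample algorithm selects $[v_{i+1}]$ as its $\prec$-least bad event, so every $A \prec [v_{i+1}]$ is good. Every $A \in \mc{a}_{\ll P_{i+1}}$ satisfies $A \ll [v_{i+1}]$ and hence $A \prec [v_{i+1}]$, so $\mc{a}_{\ll P_{i+1}}$ is entirely good at this stage; this triggers the halting condition of the $P_{i+1}$-subcheck, which then forces the resampling of $[G(P_{i+1})] = [v_{i+1}]$. With the shared random source $S$, this resampling produces the same valuation as the resample algorithm's stage $q(v_{i+1})$, and the $T$-check is poised to begin the $P_{i+2}$-subcheck; the inductive step is complete.

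Once $i = r$ is reached, the logs coincide through stage $n$; each $[v_i]$ is resampled by the $T$-check at the same stage as by the resample algorithm, and at that stage $[v_i]$ is bad by the definition of the resample algorithm, so the pass condition holds (and the $T$-check then harmlessly enters the $\emptyset$-subcheck and halts). The main subtlety is keeping the coupling tight across each subcheck transition: Proposition~\ref{MoserTreeDoubleLessThan} supplies a bad event inside $\mc{a}_{\ll P_{i+1}}$ at every intermediate stage, while the direct implication $A \ll B \Rightarrow A \prec B$ from the definition of $\ll$ ensures that $\mc{a}_{\ll P_{i+1}}$ first becomes entirely good exactly when the resample algorithm is about to resample $[v_{i+1}]$, so the $T$-check's subcheck boundaries must line up precisely with the markers $q(v_i)$ and neither premature nor delayed halting is possible.
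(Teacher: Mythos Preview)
Your proof is correct and follows essentially the same approach as the paper's: a strong induction showing $\hat{q}(v_i) = q(v_i)$ by using Proposition~\ref{MoserTreeDoubleLessThan} to witness a bad event in $\mc{A}_{\ll P_{i+1}}$ at every intermediate stage, and then observing that $\mc{A}_{\ll P_{i+1}}$ becomes entirely good precisely when the resample algorithm is about to resample $[v_{i+1}]$. If anything, your write-up is slightly more careful than the paper's in invoking the full conclusion $E_k \ll [v_j]$ for all $j > i$ (rather than just $E_k \ll [v_{i+1}]$) to place $E_k$ in $\mc{A}_{\ll P_{i+1}}$.
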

	\begin{proof}
		 Suppose $E_1,\dots,E_n$ is an initial segment of $\log(S)$ and that $T$ is the Moser tree generated by $E_1,\dots,E_n$. Let $\hlog(S) = \widehat{E}_1,\widehat{E}_2,\dots$ be the log of the $T$-check. We show that the $T$-check passes.
		
		 First we show that $\log(S) = \widehat{\log}(S)$. Since the $T$-check and the resample algorithm do the same thing except possibly at stages $\hat{q}(S)(v_i)$ for some $v_i \in T$, it suffices to show that for each $v_i \in T$ that $\hat{q}(S)(v_i) = q(S)(v_i)$. We proceed by strong induction on $i$.
		 
		 Let the expression $q(v_0)$ equal $0$ by definition. Suppose that $\hat{q}(S)(v_j) = q(S)(v_j)$ for each $j < i$. Then, $E_k = \wh{e}_k$ for each $k \leq q(v_{i-1})$. By Proposition \ref{MoserTreeDoubleLessThan}, for each $k$ with $q(S)(v_{i-1}) < k < q(S)(v_i)$, we have that $E_k \ll [v_i]$. One may show by induction on $k$ that this implies $E_k = \wh{E}_k$ for $k < q(S)(v_i)$. Since $\wh{e}_k \ll [v_i]$ for each $k < q(S)(v_i)$, we have that $\hat{q}(S)(v_i) \geq q(S)(v_i)$. 
		 
		 Let $K = T \setminus \{ v_1,\dots,v_i\}$. Since $E_{q(S)(v_i)} = [v_i] \not \in \mc{A}_{\ll K}$, it must be the case $R_{q(S)(v_i)}$ makes each $A \in \mc{a}_{\ll K}$ good, so $\hat{q}(S)(v_i) = q(S)(v_i)$. 
		
		The $T$-check passes because, for each $i$, we have that $E_{\hat{q}(S)(v_i)} = [v_i]$, which means that $R_{\hat{q}(S)(v_i)} = \wh{R}_{\hat{q}(S)(v_i)}$ makes $[v_i]$ bad. 
		
	\end{proof}
	
	Thus, we conclude that
	\[
	\begin{split}
	\Pr(T \text{ is generated by an initial segment of the} &\text{ resample algorithm}) \\&\leq \Pr(\text{The } T\text{-check passes}).
	\end{split}
	\]
	\subsection{Bounding the Probability the \texorpdfstring{$T$}{T}-check Passes} 
	
	In this section, we will show that 
	\begin{equation}
		\label{TCheckBound}
		\Pr(\text{The } T\text{-check passes}) \leq \prod_{v \in T} P^*([v]).
	\end{equation}
	
 	We develop some basic properties of the $T$-check. First we confirm that some relevant sets are measurable. 
	
	\begin{Proposition}
		Fix Moser tree $T$ of size $r$. The following sets are clopen in $(\Omega',\Pr)$, and therefore measurable.
		\begin{itemize}
			\item For any $\widehat{E}_1,\dots,\widehat{E}_n \in \mc{a}^n$, the set \[\{S \in \Omega': \hat{E}_1,\dots,\hat{E}_n \sqsubset \widehat{\log}(S)\}.\]
			\item For each $1 \leq i \leq r$ and $k \in \mathbb{N}$, the set \[\{S \in \Omega': \hat{q}(v_i) = k\}.\]
			\item For all $1 \leq i \leq r$, $k \in \mathbb{N}$, $x_n \in \vbl([v_i])$ and $\ell \in \ran([x_i])$, the set
			\[
			\{S \in \Omega': \hat{q}(S)(v_i) = k \text{ and } \widehat{V}_{\hat{q}(S)(v_i)}(S)(n) = \ell\}.
			\] 
		\end{itemize}
	\end{Proposition}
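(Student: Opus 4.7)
The common thread is that each of the three events is determined by only finitely many coordinates of the random source $S \in \Omega'$, and hence is a finite union of cylinder sets in the product topology on $\Omega'$. Since each factor variable $x_i^j$ takes values in a finite set, every cylinder set on finitely many coordinates is clopen, and finite unions of clopen sets are clopen.

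The key finiteness observation is that, because $\mc{A}$ is finite and each $\vbl(A)$ is finite, the set $\mc{X}_* := \bigcup_{A \in \mc{A}} \vbl(A)$ is a finite subset of $\mc{X}$. The $T$-check inspects the current valuation $\widehat{R}_s$ only through truth values of events in $\mc{A}$, and each such event depends only on variables in $\mc{X}_*$. Moreover, a resample step updates the counter function $\widehat{V}$ only on $\rsp(E) \subset \mc{X}_*$, so after $s$ stages the counters are bounded by $s$ and are nonzero only on $\mc{X}_*$.

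Next I would prove by induction on $s$ that the configuration of the $T$-check after $s$ stages---the partial log $\widehat{E}_1,\dots,\widehat{E}_s$, the restriction $\widehat{V}_s \harpoon \mc{X}_*$, the restriction $\widehat{R}_s \harpoon \mc{X}_*$, and which subcheck $T \setminus \{v_1,\dots,v_j\}$ is currently active---is a deterministic function of the finite restriction $S \harpoon \{x_i^j : x_i \in \mc{X}_*,\ 0 \leq j \leq s\}$. The base case records that $\widehat{R}_0(x_i) = S(x_i^0)$ for every $x_i \in \mc{X}_*$. The inductive step uses the fact that whether the current subcheck resamples an event of $\mc{A}$ or transitions to the next subcheck depends only on truth values of events in $\mc{A}$, and that a resample step reads only the next copies $x_i^{\widehat{V}_s(x_i)+1}$ for the finitely many $x_i \in \rsp(\widehat{E}_{s+1}) \subset \mc{X}_*$.

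From this inductive claim, each of the three sets is determined by the restriction of $S$ to a finite set of coordinates: the first by the first $n$ stages of the $T$-check; the second by the first $k$ stages (the configuration at stage $k$ records whether the $T \setminus \{v_1,\dots,v_i\}$-subcheck has just begun); and the third by the first $k$ stages together with the recorded value $\widehat{V}_k(n)$, where $n$ is the index of a variable in $\vbl([v_i]) \subset \mc{X}_*$. Hence all three sets are cylinder-like events on finitely many finite-valued coordinates, so they are clopen, and therefore measurable. I do not anticipate any serious obstacle; the only mild subtlety is verifying that a subcheck-transition step is itself a deterministic function of the current finite configuration, which is immediate from the definition of the $T$-check.
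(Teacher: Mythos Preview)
Your overall strategy---showing by induction on the stage that the $T$-check's configuration after $s$ steps is a function of a finite restriction of $S$---is exactly the content of the paper's one-line proof, just spelled out in more detail. So the approach is right.

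There is, however, a genuine gap in your finiteness argument. You write that ``$\mc{A}$ is finite'' and deduce that $\mc{X}_* = \bigcup_{A \in \mc{A}} \vbl(A)$ is finite. The paper's setup is admittedly inconsistent on this point (it says ``finite'' in one sentence and then demands that $\prec$ have order type $\omega$), but the intended and operative setting---visible in Theorem~\ref{ALLLL}'s quantification over finite $\mc{B} \subset \mc{A}$, in Theorem~\ref{CLLLL}, and in every application---is that $\mc{A}$ is countably infinite. In that setting your $\mc{X}_*$ is typically all of $\mc{X}$, and your induction as stated does not yield a finite set of coordinates.

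The repair is short. During the $(T \setminus \{v_1,\dots,v_i\})$-subcheck, the halting test inspects only events in $\mc{A}_{\ll P}$, and any event resampled by the embedded resample algorithm is the $\prec$-least bad one, hence lies in the $\prec$-initial segment below $\max_{\prec}\{[v_j] : j > i\}$. Since $\prec$ has order type $\omega$, every such initial segment is finite. Thus over the first $n$ stages the $T$-check can only ever inspect or resample events from a single fixed finite set $\mc{A}_T \subset \mc{A}$ (depending only on $T$), and you may take $\mc{X}_* = \bigcup_{A \in \mc{A}_T} \vbl(A)$. With this correction your inductive argument goes through unchanged.
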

	\begin{proof}
		Each of these sets are decided true or false by a finite stage of the $T$-check, which uses finitely many variables in $\mc{X}'$. 
	\end{proof}
	
	Next, we show that whenever the $T$-check checks whether $[v_i]$ is true at stage $\hat{q}(v_i)$, the variables in $\rsp([v_i])$ will always have been resampled the same number of times. 

	\begin{Proposition}
		\label{ConstantVs}
		For each $v_i \in T$ and each $x_j \in \rsp([v_i])$, the number $\wh{v}_{\hat{q}(v_i)}(j)$ is constant as a random variable on the subspace $\{\hat{q}(r) < \infty\}$. 
	\end{Proposition}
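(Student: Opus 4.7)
My plan is to show that the count $\widehat{V}_{\hat{q}(v_i)}(j)$ depends only on the tree $T$ (together with $i$ and $j$), and is therefore constant on $\{\hat{q}(r) < \infty\}$. The strategy is to identify which stages of the $T$-check can possibly resample $x_j$ under the hypothesis $x_j \in \rsp([v_i])$.

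First I will argue that throughout the internal phase of any $T \setminus \{v_1,\dots,v_{k-1}\}$-subcheck with $k \leq i$, no resampling involves $x_j$. Writing $P = T \setminus \{v_1,\dots,v_{k-1}\}$, any event $A$ resampled during such an internal phase lies in $\mc{A}_{\ll P}$, and since $k \leq i$ gives $v_i \in P$, we obtain $A \ll [v_i]$. If additionally $x_j \in \rsp(A)$, then $\rsp(A) \cap \rsp([v_i]) \ni x_j$, so either $A = [v_i]$ or $A \in \nbr([v_i])$. Both possibilities contradict $A \ll [v_i]$: the first trivially, since no event is $\ll$ itself, and the second because the relation $\ll$ excludes neighbors by definition.

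Next I will account for the end-of-subcheck resamplings that can touch $x_j$ before stage $\hat{q}(v_i)$. These occur at the conclusions of the $T \setminus \{v_1,\dots,v_{k-1}\}$-subchecks for $k < i$, each of which resamples $[v_k]$ and therefore updates $x_j$ precisely when $x_j \in \rsp([v_k])$. On $\{\hat{q}(r) < \infty\}$, every $\hat{q}(v_k)$ with $k \leq i$ is finite, guaranteeing that each such subcheck terminates and each such end-of-subcheck resampling actually occurs. Combining both observations yields
\[
\widehat{V}_{\hat{q}(v_i)}(j) = \bigl|\{k : 1 \leq k < i,\; x_j \in \rsp([v_k])\}\bigr|,
\]
a purely combinatorial quantity determined by $T$, $i$, and $j$ alone.

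The only delicate step is the contradiction in the first paragraph, which hinges on unpacking $A \ll [v_i]$ as $A \prec [v_i]$ \emph{and} $A \not\in \nbr([v_i])$, combined with the definition of $\nbr$ in terms of intersecting $\rsp$-sets. Everything else is bookkeeping that reads directly off the rigid subcheck structure imposed by $T$.
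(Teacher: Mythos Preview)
Your argument is correct and arrives at the same formula as the paper, which writes $\widehat{V}_{\hat{q}(v_i)}(j)=|u(i,j)|$ with $u(i,j)=\{s<i:x_j\in\rsp([v_s])\}$ and verifies it by strong induction on $|u(i,j)|$. Your direct split into internal-phase stages versus end-of-subcheck stages is a little cleaner than the paper's inductive packaging, but the underlying mechanism is identical: an internal-phase resampling cannot be a neighbor of $[v_i]$, so only the end-of-subcheck resamplings of the $[v_k]$ with $k<i$ can touch $x_j$.

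One step deserves an explicit line of justification. You assert that any event $A$ resampled during the internal phase of a $P$-subcheck lies in $\mc{A}_{\ll P}$, but the $T$-check runs the \emph{full} resample algorithm (choosing the $\prec$-least bad event from all of $\mc{A}$) and merely waits until $\mc{A}_{\ll P}$ is entirely good; it does not restrict the search to $\mc{A}_{\ll P}$ a priori. The claim is nonetheless true: while the internal phase is still running, some $A_0\in\mc{A}_{\ll P}$ is bad, so the chosen event $E$ satisfies $E\preceq A_0$. If $E\in\nbr^+([v])$ for some $v\in P$, then Proposition~\ref{LefthandedProps}(2) applied with $B=E$, $A=[v]$, $C=A_0$ yields $A_0\prec E$, contradicting $E\preceq A_0$. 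Hence $E\ll[v]$ for every $v\in P$, i.e., $E\in\mc{A}_{\ll P}$. With this sentence added, your proof is complete; the paper's proof uses exactly this same appeal to Proposition~\ref{LefthandedProps}(2), just organized as a contradiction from the other direction.
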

	\begin{proof}
		Let $u(i,j) = \{s<i: x_j \in \rsp([v_s])\}$. Note that $u(k,j) \subset u(i,j)$ for all $k < i$. We show that $\wh{v}_{\hat{q}(v_i)}(j) = |u(i,j)|$ whenever $\hat{q}(v_i) < \infty$. We proceed by strong induction on $|u(i,j)|$. 
		
		Suppose $|u(i,j)| = \wh{v}_{\hat{q}(v_i)}(j)$ whenever $|u(i,j)| < n$. First we show that $\wh{v}_{\hat{q}(v_i)}(j) \geq n$. When $n = 0$, this is true by definition. Suppose that $|u(i,j)| = n > 0$. Then, $u(i,j) = \{k_1,\dots,k_n\}$ for some increasing sequence $k_1,\dots,k_n$. Since the sequence is increasing, $u(k_n,j) = \{k_1,\dots,k_{n-1}\}$ and $|u(k_n,j)| = n-1$. Hence, by the induction hypothesis, $|u(k_n,j)| = \wh{v}_{\hat{q}(v_{k_n})}(j) = n-1$. Since $x_j$ is resampled once more at stage $\hat{q}(v_{k_n})$, we have that $\wh{v}_{\hat{q}(v_i)}(j) \geq n$.
		
		Assume by way of contradiction that $\wh{v}_{\hat{q}(v_i)}(j) > n$. Then, $x_j$ is resampled at some stage $\ell$ such that $\hat{q}(v_{k_n}) < \ell < \hat{q}(v_i)$. Since $x_j \in \rsp(E_\ell)$, we have that $E_\ell \in \nbr^+([v_i])$. Fix $A \ll [v_i]$. By Proposition \ref{LefthandedProps}, we have that $E_\ell \succ A$, so $\wh{R}_\ell$ makes $A$ good. Let $K =  T \setminus \{v_1,\dots,v_{i-1}\}$. The fact that $A \ll [v_i]$ implies that $A \in \mc{a}_{\ll K}$, so each $A \in \mc{a}_{\ll K}$ is good at stage $\ell$. This contradicts that $\hat{q}(v_i) > \ell$. 
		
		Thus, $\wh{v}_{\hat{q}(v_i)}(j) = |u(i,j)|,$ which is constant in $\Omega'$.
	\end{proof}

	For $v \in T$, let  \[\mc{v}_v = \left\{x_i^{\wh{v}(\hat{q}(v))(i)}: x_i \in \rsp([v])\right\} \subset \mc{x}'\] be the set of variables from $\mc{x}'$ that the $T$-check uses as resample variables to determine whether or not $[v]$ is bad at stage $\hat{q}(v)$. Let $\mc{v} = \bigcup_{v\in T} \mc{v}_v$. The next claim states that the log of the resample algorithm is independent from $\mc{v}$
	
	\begin{Claim}
		\label{IndependentLogs}
		Let $S_1, S_2 \in \Omega'$ be such that $S_1(x_i^j) = S_2(x_i^j)$ for each $x_i^j \in \mc{x} \setminus \mc{v}$. Then, 
		\[
			\hlog(S_1) = \hlog(S_2).
		\]
		
	\end{Claim}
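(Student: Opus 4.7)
The plan is to show by induction on the stage $s$ that the $T$-check takes the same action at stage $s$ when run with $S_1$ as when run with $S_2$. Since $\hlog(S)$ is just the sequence of actions, this immediately gives $\hlog(S_1) = \hlog(S_2)$. By the inductive hypothesis, the actions at stages $0, 1, \dots, s-1$ agree in the two runs, so both runs sit in the same current subcheck $P = T \setminus \{v_1, \dots, v_j\}$ at stage $s$ with the same resample-count function $\wh{V}_s$. The action at stage $s$ is fully determined by the badness verdict on $\mc{A}_{\ll P}$: if all events there are good, the $T$-check resamples $G(P)$; if some are bad, it resamples the $\prec$-least bad one. In either case, the only $S$-dependence comes from evaluating $\wh{R}_s(x_n) = S(x_n^{\wh{V}_s(n)})$ for $x_n \in \vbl(A)$, $A \in \mc{A}_{\ll P}$.

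The core step is to show that every such $x_n^{\wh{V}_s(n)}$ lies in $\mc{X}' \setminus \mc{V}$. I would argue by contradiction: assume $x_n^{\wh{V}_s(n)} \in \mc{V}_{v_k}$ for some $v_k \in T$, equivalently $x_n \in \rsp([v_k])$ and $\wh{V}_s(n) = \wh{V}_{\hat{q}(v_k)}(n)$. If $\hat{q}(v_k) < s$, then $[v_k]$ has already been resampled by the $T$-check, and since $x_n \in \rsp([v_k])$ this resampling strictly incremented the count of $x_n$, giving $\wh{V}_s(n) > \wh{V}_{\hat{q}(v_k)}(n)$, a contradiction. If $\hat{q}(v_k) \geq s$, then $v_k$ still lies in $P$ (either yet to be processed when $\hat{q}(v_k) > s$, or equal to $G(P)$ being resampled at the current transition stage when $\hat{q}(v_k) = s$), so $A \in \mc{A}_{\ll P}$ gives $A \ll [v_k]$; then the paper's standing hypothesis that $B \gg C$ together with $B \notin \nbr(C)$ implies $\rsp(B) \cap \vbl(C) = \emptyset$ yields $\rsp([v_k]) \cap \vbl(A) = \emptyset$, contradicting $x_n \in \rsp([v_k]) \cap \vbl(A)$.

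With every read variable lying in $\mc{X}' \setminus \mc{V}$, the hypothesis that $S_1$ and $S_2$ agree off $\mc{V}$ forces both runs to see the same readings at stage $s$, hence reach the same badness verdict and take the same action, closing the induction. The only delicate bookkeeping is verifying that at the transition stage $\hat{q}(v_k) = s$ the vertex $v_k = G(P)$ still belongs to the currently active $P = T \setminus \{v_1, \dots, v_{k-1}\}$, so that the $\rsp/\vbl$-disjointness argument covers the $\hat{q}(v_k) = s$ case in the same way as the $\hat{q}(v_k) > s$ case; no other substantive obstacle is anticipated.
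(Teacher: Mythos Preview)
Your proposal is correct and follows essentially the same idea as the paper's proof, just fleshed out more carefully. The paper's argument is a terse two sentences: the variables in $\mc{V}_{v_1}$ are never read before stage $\hat{q}(v_1)$ (because all events processed up to then are $\ll$ every label of $T$), and at stage $\hat{q}(v_1)$ they are immediately resampled without influencing the log entry; then ``this repeats for $v_2,v_3,\dots$''. Your stage-by-stage induction and case split on $\hat{q}(v_k)<s$ versus $\hat{q}(v_k)\geq s$ is exactly the rigorous unpacking of that sketch, invoking the standing hypothesis $\rsp([v_k])\cap\vbl(A)=\emptyset$ for $A\ll[v_k]$ in the second case.

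One small point you leave implicit: when some event in $\mc{A}_{\ll P}$ is bad, you assert that the action is to resample the $\prec$-least bad event and that this event lies in $\mc{A}_{\ll P}$. This uses that $\mc{A}_{\ll P}$ is $\prec$-downward closed, which follows from Proposition~\ref{LefthandedProps}(2) but is worth stating, since the $T$-check is defined to run the full resample algorithm (over all of $\mc{A}$) during a subcheck.
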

	\begin{proof}
		Since the events in $\hlog \harpoon (\hat{q}(v_1)-1)$ are all non-neighbors of the labels of $T$, the variables in $\mc{v}_{v_1}$ are never used until $\hat{q}(v_1)$. Then, the variables in $\mc{v}_{v_1}$ are resampled with no bearing on the log. This repeats for $v_2, v_3$ and so on. 
	\end{proof}
	
	We need a final bit of notation before we calculate an upper bound for the probability that the $T$-check passes. 
	
	Let $Q = \{S' \in \Omega': \hat{q}_r(S') < \infty\}$. For each stage $s$, let $\tau_s: Q \to \Omega$ send $S' \in Q$ to $S \in \Omega$ such that
	\[
		x_i(S) = x_i^{\wh{v}_s(i)}(S').
	\]
	
	Then, for an event $A \in \mc{A}$, the set $\tau_s^{-1}(A)$ is the event that $A$ is true at stage $s$ of the $T$-check. For each $v \in T$, let $\tau_v = \tau_{\hat{q}(v)}$. Proposition \ref{ConstantVs} helps us bound the $T$-check passing when we condition starting from the \textit{end} of the $T$-check by ensuring that the resample variables do not depend on what happened previously. Since the $T$-check never passes if $\hat{q}(r) = \infty$, we have
	
\begin{align}
	\Pr(\text{The } T\text{-check passes}) &\leq \Pr_{\hat{q}(r)\leq \infty}\left( \bigwedge_{v \in T} \tau^{-1}_v\left([v]\right)\right) \nonumber\\
	&\leq \prod_{r \geq i > 1} \Pr_{\hat{q}_r < \infty} \left(\tau_{v_i}^{-1}\left([v_i]\right) \bigg| \bigwedge_{r\geq j > i} \tau_{v_j}^{-1}\left([v_j]\right) \right)\label{FirstTCheckBound}.
\end{align}

	We will use the $P^*$ condition, Condition~\ref{PStarCondition}, to bound the expression in Line \ref{FirstTCheckBound} factor-wise. However, Condition~\ref{PStarCondition} only applies to probabilities over $(\Omega, \Pr)$. We pull Condition~\ref{PStarCondition} back to $\Omega'$.
	
	\begin{Lemma}
		\label{UsingPStarBound}
		For any event $B \subset \{\hat{q}_r < \infty\}$ such that $B \in \sigma(\mc{x}'\setminus \mc{v}_v)$,
		\[
			\Pr_{\hat{q}(r) < \infty}\left(\tau_v^{-1}([v])\bigg| B\right) \leq P^*([v]). 
		\]
	\end{Lemma}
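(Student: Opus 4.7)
The plan is to pull Condition~\ref{PStarCondition} back from $(\Omega, \Pr)$ to $(\Omega', \Pr)$ by conditioning on the $T$-check's valuation of the static variables $\stc([v])$ at stage $\hat{q}(v)$. The event $\tau_v^{-1}([v])$ is determined by the copies $x_i^{\wh{V}_{\hat{q}(v)}(i)}$ for $x_i \in \vbl([v])$. By Proposition~\ref{ConstantVs}, those copies with $x_i \in \rsp([v])$ are exactly the deterministic variables in $\mc{V}_v$, whereas for $x_i \in \stc([v])$ the corresponding copy lies in $\mc{X}' \setminus \mc{V}_v$, since no static variable of $[v]$ equals any resample variable of $[v]$.

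For each $\mu \in \mc{E}_{[v]}$, let $F_\mu$ be the event that the $T$-check assigns $\mu$ to $\stc([v])$ at stage $\hat{q}(v)$. I would first sharpen Claim~\ref{IndependentLogs} to the statement that the initial segment of $\hlog$ through stage $\hat{q}(v)$ lies in $\sigma(\mc{X}' \setminus \mc{V}_v)$. The argument: every subcheck executing before stage $\hat{q}(v)$ retains $v$ in its parameter $P$ and only resamples events in $\mc{A}_{\ll P}$, each satisfying $A \ll [v]$; the standing hypothesis therefore gives $\rsp([v]) \cap \vbl(A) = \emptyset$, so neither the bad checks nor the resamples performed during those stages consult any copy in $\mc{V}_v$. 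Consequently each $\wh{V}_{\hat{q}(v)}(i)$, and hence each $F_\mu$, is $\sigma(\mc{X}' \setminus \mc{V}_v)$-measurable, as is $B$ by assumption.

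Writing $B$ as the disjoint union of $B \cap F_\mu$ over $\mu \in \mc{E}_{[v]}$ yields
\[
	\Pr\bigl(\tau_v^{-1}([v]) \bigm| B\bigr) = \sum_{\mu \in \mc{E}_{[v]}} \Pr\bigl(\tau_v^{-1}([v]) \bigm| B \cap F_\mu\bigr) \Pr(F_\mu \mid B).
\]
On $B \cap F_\mu$ the truth of $\tau_v^{-1}([v])$ depends only on the values taken by the variables in $\mc{V}_v$, which are mutually independent and independent of $\sigma(\mc{X}' \setminus \mc{V}_v)$ in the product measure on $\Omega'$; conditioning on $B \cap F_\mu$ therefore leaves their joint law unchanged and equal to the law of $\rsp([v])$ in $(\Omega, \Pr)$. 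Hence $\Pr(\tau_v^{-1}([v]) \mid B \cap F_\mu) = \Pr([v] \mid E_\mu) \leq P^*([v])$ by Condition~\ref{PStarCondition}, and summing gives $\Pr(\tau_v^{-1}([v]) \mid B) \leq P^*([v])$.

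The principal obstacle is the refinement of Claim~\ref{IndependentLogs}: the claim as stated only controls $\hlog$ by $\sigma(\mc{X}' \setminus \mc{V})$, whereas the argument above needs the sharper measurability of the initial segment of $\hlog$ with respect to $\sigma(\mc{X}' \setminus \mc{V}_v)$ alone. This will require carefully tracking which copies each subcheck can consume before reaching stage $\hat{q}(v)$ and invoking the standing hypothesis to rule out interference with $\mc{V}_v$. Once that measurability is in hand, the rest is a routine conditional decomposition made clean by the determinism of $\mc{V}_v$ furnished by Proposition~\ref{ConstantVs}.
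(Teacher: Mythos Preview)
Your proposal is correct and follows the same approach as the paper: partition on the valuation $\mu$ of $\stc([v])$ at stage $\hat q(v)$, then use that the copies in $\mc{V}_v$ are fresh and independent of everything in $\sigma(\mc{X}'\setminus\mc{V}_v)$ to reduce each term to $\Pr([v]\mid E_\mu)\le P^*([v])$. Your $F_\mu$ is exactly the paper's $\tau_v^{-1}(E_\mu)$.

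The one point to correct is your ``principal obstacle'': there is none. Since $\mc{V}_v\subseteq\mc{V}$, we have $\mc{X}'\setminus\mc{V}\subseteq\mc{X}'\setminus\mc{V}_v$ and hence $\sigma(\mc{X}'\setminus\mc{V})\subseteq\sigma(\mc{X}'\setminus\mc{V}_v)$. Claim~\ref{IndependentLogs} already places $\hlog$ (and therefore $\hat q(v)$ and all the counters $\wh V_{\hat q(v)}(i)$) in the \emph{smaller} $\sigma$-algebra, which automatically gives measurability with respect to the larger one you want. Combined with the observation that the static-variable copies $x_i^{\wh V_{\hat q(v)}(i)}$ for $x_i\in\stc([v])$ lie in $\mc{X}'\setminus\mc{V}_v$ simply because $\stc([v])\cap\rsp([v])=\emptyset$, this gives $F_\mu\in\sigma(\mc{X}'\setminus\mc{V}_v)$ with no extra work. (As an aside, your sketched refinement argument has a gap anyway: before stage $\hat q(v_i)$ the $T$-check also resamples $[v_1],\dots,[v_{i-1}]$, and these need not satisfy $[v_j]\ll[v_i]$, so the standing hypothesis does not directly rule out their touching copies in $\mc{V}_{v_i}$; Proposition~\ref{ConstantVs} and Claim~\ref{IndependentLogs} handle this for you.)
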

	\begin{proof}
		Fix $v \in T$. Recall that $\mc{e}_{[v]}$ is the set of all valuations of $\stc{[v]}$. For each $\mu \in \mc{e}_{[v]}$, let $G_\mu$ be the set of all valuations of $\rsp([v])$ that make $[v]$ true whenever $\stc([v])$ is valuated by $\mu$. That is, $\nu \in G_\mu$ if and only if $[v]$ is true when for all $x \in \stc([v])$ and all $y \in \rsp([v])$, we have that $x = \mu(x)$ and $y = \nu(y)$. Then, since $\wh{v}_{\hat{q}(v)}(i)$ is constant for each $x_i \in \rsp([v])$, we get that, for each $\nu \in G_\mu$, 
		
		\[
		\begin{split}
			\tau_v^{-1}(E_\nu)   &= \left\{S \in \Omega': \hat{q}(S)(r) < \infty \text{ and } \wh{R}_{\hat{q}(v)}\harpoon \rsp([v]) = \nu\right\} \\
			&=  \left\{S \in \Omega': \hat{q}(S)(r) < \infty \text{ and } \left(\forall  x_i^j \in \mc{v}_v\right)\left(S(x_i^j) = \nu(x_i)\right) \right\}.
		\end{split}
		\]
		Since $\mc{v}_v$ is a set of mutually independent random variables, we obtain
		\[
			\Pr_{\hat{q}(v) < \infty} \left(\tau_v^{-1}(E_\nu)\right) = \Pr_\Omega (E_\nu).
		\]
		
		The collection of sets $\{\tau^{-1}(E_\mu): \mu \in \mc{e}_{[v]}\}$ is an open cover of $\Omega'$, so we get
		\[
		\begin{split}
			\Pr_{\hat{q}(r) < \infty}\left(\tau_v^{-1}([v])\bigg| B\right) &\leq \sum_{\mu \in \mc{e}_{[v]}} \Pr_{\hat{q}(r) < \infty}\left(\tau_v^{-1}([v])\bigg| B \wedge \tau_v^{-1}(E_\mu)\right)\Pr_{\hat{q}(r) < \infty} (\tau_v^{-1}(E_\mu))\\
			&\leq \sum_{\mu \in \mc{e}_{[v]}}\sum_{\nu \in G_\mu} \Pr_{\hat{q}(r) < \infty}\left(\tau_v^{-1}(E_\nu)\bigg| B \wedge \tau_v^{-1}(E_\mu)\right)\Pr_{\hat{q}(r) < \infty} (\tau_v^{-1}(E_\mu)).
		\end{split}
		\]
		By Claim \ref{IndependentLogs}, we have that $\tau_v^{-1}(E_\mu)$ is independent from $\mc{V}_v$. Since $B$ is also independent from $\mc{V}_v$, and $\tau_v^{-1}(E_\nu) \in \sigma(\mc{v}_v)$, we have that $B \wedge \tau_v^{-1}(E_\mu)$ is independent from $\tau_v^{-1}(E_\nu)$. Thus,  
				\[
		\begin{split}
			\Pr_{\hat{q}(r) < \infty}\left(\tau_v^{-1}([v])\bigg| B\right) &\leq \sum_{\mu \in \mc{e}_{[v]}}\sum_{\nu \in G_\mu} \Pr_{\hat{q}(r) < \infty}\left(\tau_v^{-1}(E_\nu)\right)\Pr_{\hat{q}(r) < \infty} (\tau_v^{-1}(E_\mu))\\
			&= \sum_{\mu \in \mc{e}_{[v]}}\sum_{\nu \in G_\mu} \Pr_{\Omega}\left(E_\nu\right)\Pr_{\hat{q}(r) < \infty} (\tau_v^{-1}(E_\mu))\\
			&= \sum_{\mu \in \mc{e}_{[v]}} \Pr_\Omega([v]| E_\mu) \Pr_{\hat{q}(r) < \infty} (\tau_v^{-1}(E_\mu))\\
			&\leq\sum_{\mu \in \mc{e}_{[v]}} P^*([v]) \Pr_{\hat{q}(r) < \infty} (\tau_v^{-1}(E_\mu))\\
			&= P^*([v]).
		\end{split}
		\]
	\end{proof}
	
	Since the variables in $\mc{v}_v$ are only used to determine whether $[v]$ is true at stage $\hat{q}(v)$ and then resampled, $\bigwedge_{r\geq j > i} \tau_{v_j}^{-1}\left([v_j]\right)$ is independent of $\mc{v}_{v_i}$. Hence, we can use Lemma \ref{UsingPStarBound} to bound Line \ref{FirstTCheckBound}, obtaining
	\[
		\Pr(\text{The } T\text{-check passes}) \leq \prod_{v\in T} P^*([v]). 
	\]
	
	We will apply the following lemma of Moser and Tardos (we use part 2 of the lemma for the computable version).
	
	\begin{Lemma}
		\label{sumMoserTreeBound}
\begin{enumerate}\phantom{f}
	\item \cite{Moser2010}	Suppose there is $z: \mc{A} \to (0,1)$ such that, for each $A \in \mc{A}$,
		\[
			P^*(A) \leq z(A)\prod_{B \in \nbr(A)} \left(1 - z(B)\right).
		\]
		
		Fix $C \in \mc{A}$. Let $\mc{T}_C$ be the set of Moser trees whose root's label is $C$. Then,
		
		\[
			\sum_{T \in \mc{t}_C} \prod_{v \in T} P^*([v]) \leq \frac{z(C)}{1-z(C)}.
		\]	
	\item \cite{Rumyantsev2013}	Furthermore, if there is $\alpha \in (0,1)$ such that, for each $A \in \mc{A}$,
	
		\[
			P^*(A) \leq \alpha z(A)\prod_{B \in \nbr(A)} \left(1 - z(B)\right),
		\]
		then, for every $m \in \mathbb{N}$
		\[
			\sum_{T \in \mc{t}_C, |T| > m}\left( \prod_{v \in T} P^*([v])\right) \leq \alpha^m \frac{z(C)}{1-z(C)}.
		\]
\end{enumerate}
	\end{Lemma}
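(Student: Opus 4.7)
Since both parts are attributed results, the plan is to sketch the standard Galton--Watson / generating function argument of Moser and Tardos in our setting and then derive Part 2 as an immediate corollary. Set $x_A = z(A)\prod_{B \in \nbr(A)}(1-z(B))$, so the hypothesis yields $P^*(A) \leq x_A$ and hence $\prod_{v \in T} P^*([v]) \leq \prod_{v \in T} x_{[v]}$ for every Moser tree $T$.

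For Part 1, Proposition \ref{BasicTreeProps} implies that the children of any node in a Moser tree carry distinct labels, each drawn from $\nbr$ of the parent's label. Bounding from above by dropping the requirement that each tree be generated by an actual legal log, I will consider the larger family of all finite labeled trees whose root is labeled $A$ and in which every node's children carry distinct labels drawn from $\nbr$ of the node's label. Writing $F(A)$ for the sum of $\prod_{v \in T} x_{[v]}$ over this larger family and decomposing such a tree into its root together with, for each chosen $B \in \nbr(A)$, an independent subtree of the same type rooted at $B$, I obtain the recursion
\[
F(A) \;=\; x_A \prod_{B \in \nbr(A)}\bigl(1+F(B)\bigr).
\]
Setting $y(A) := z(A)/(1-z(A))$, the identity $1+y(B) = 1/(1-z(B))$ makes $y$ a super-solution: $x_A \prod_B (1-z(B))^{-1} = z(A) \leq y(A)$. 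A straightforward induction on tree depth then bounds the depth-truncated versions of $F(A)$ by $y(A)$, giving $\sum_{T \in \mc{T}_C}\prod_{v \in T} x_{[v]} \leq F(C) \leq z(C)/(1-z(C))$, which is Part 1.

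For Part 2, the strengthened hypothesis $P^*(A) \leq \alpha x_A$ yields the pointwise bound $\prod_{v \in T} P^*([v]) \leq \alpha^{|T|} \prod_{v \in T} x_{[v]}$. Since $\alpha \in (0,1)$, we have $\alpha^{|T|} \leq \alpha^m$ whenever $|T| > m$, so Part 1 immediately produces
\[
\sum_{\substack{T \in \mc{T}_C \\ |T| > m}} \prod_{v \in T} P^*([v]) \;\leq\; \alpha^m \sum_{T \in \mc{T}_C} \prod_{v \in T} x_{[v]} \;\leq\; \alpha^m \cdot \frac{z(C)}{1-z(C)}.
\]
The main subtlety I expect is justifying the recursion in the relaxed setting: the lefthanded construction (with its $\prec$-minimality choice rule) may forbid some trees that satisfy only the sibling-label constraint, but since we seek an upper bound, enlarging the index set of the sum is harmless and the classical argument transfers intact.
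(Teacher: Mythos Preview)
Your argument is correct and is exactly the standard Moser--Tardos Galton--Watson computation from \cite{Moser2010}, together with the trivial $\alpha^{|T|}$ refinement from \cite{Rumyantsev2013}; the paper itself does not prove this lemma at all but simply cites those two references, so you are supplying precisely the proof the citations point to. Your remark that the lefthanded $\prec$-minimality rule may forbid some labeled trees is well taken and handled correctly: since you only need an upper bound, relaxing to the full family of sibling-distinct labeled trees is harmless, and the recursion and super-solution $y(A)=z(A)/(1-z(A))$ go through verbatim.
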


	We are now ready to complete the proof of Theorem \ref{ALLLL}. Let $\mc{t}_\mc{b}$ be the set of Moser trees whose root's label is a member of $\mc{B}$. By Lemma \ref{sumMoserTreeBound}, we get
	\[
	\begin{split}
		\sum_{T \in \mc{t}_\mc{b}} Pr(T \text{ is generated by an initial} &\text{ segment of the resample algorithm})\\ &\leq \sum_{B \in \mc{B}}\frac{z(B)}{1-z(B)}.
	\end{split}
	\]
	
	Recall that $t_\mc{B}$ is the first step of the resample algorithm at which each $B \in \mc{B}$ is good. Without loss of generality, assume that $\mc{B}$ is $\prec$-downwards closed. Then, each Moser tree produced by a finite initial segment of resample algorithm is a member of $\mc{b}$. Each step of the resample algorithm produces exactly one Moser tree and each Moser tree can be generated by exactly one initial segment of the resample algorithm, so
	\begin{align*}
		\mathbb{E}(t_\mc{b}) &= \mathbb{E}(\text{\# of Moser trees produced by the resample algorithm})\\
		&= \sum_{T \in \mc{t}_\mc{b}} \mathbb{E}\left({\mathbbm{1}}_{T \text{ is generated by an initial segment of the resample algorithm}}\right)\\
		&= \sum_{T \in \mc{t}_\mc{b}} \Pr(T \text{ is generated by an initial segment of the resample algorithm})\\
		&\leq  \sum_{T \in \mc{t}_\mc{b}} \prod_{v \in T} P^*([v])\\
		&\leq \sum_{B \in \mc{B}}\frac{z(B)}{1-z(B)}\\
		&< \infty.
	\end{align*}

	This completes the proof of Theorem \ref{ALLLL}
	
\section{Effective Witnesses}
	
	In this section, we expand upon the proof of Theorem \ref{ALLLL} in order to prove Theorem \ref{CLLLL}.
	
	\subsection{Rewriting Machines and Layerwise Computable Mappings}
	
	To model the resample algorithm of Moser and Tardos, Rumyantsev and Shen introduce a model of probabilistic computation that allows the output tape to be \textit{rewritable} --- that is, the machine can change the contents of each output cell arbitrarily often. We think of these machines as random variables from Cantor space equipped with the Cantor measure to partial functions from $\omega$ to $\{0,1\}$. We can model this behavior with a Turing functional $\Phi$, where $\Phi^B(i,s)$ represents the contents of the $i$'th cell at stage $s$ when run with random source $B$. 
	
	\begin{Definition}[Rewriting Probabilistic Turing Machine]
		\label{def:rpTm}
		A \textbf{rewriting probabilistic Turing machine} is a random variable $\mathbf{\Phi}: 2^\omega \to \{$partial functions from $\omega$ to $\{0,1\}\}$ equipped with a total computable Turing functional $\Phi$ with the property that
		\[
		\mathbf{\Phi}(B)(i) = 
		\begin{cases}
			\lim_{s \to \infty} \Phi^B (i,s) &\text{if the limit exists}\\
			\uparrow &\text{otherwise}
		\end{cases}
		\]
		for all $i$. 
	\end{Definition}
	
	We will be interested in the output distribution of rewriting probabilistic Turing machines $\mathbf{\Phi}$. Suppose that $\Pr(\mathbf{\Phi}(i)\downarrow \text{ for all }i) = 1$.
	Then, the probability distribution $Q$ on the output of $\mathbf{\Phi}$ is determined by its values $Q(\Sigma_x) = \mu(\{B: \mathbf{\Phi}(B) \in \Sigma_x\})$ where $\Sigma_x$ is the cone of infinite extensions of the binary string $x$. We say that $Q$ is computable if $Q(\Sigma_x)$ is uniformly computable with respect to $x$. The following proposition states that we can compute $Q$ if there is a computable function $N(i,\delta)$ such that the probability that the $i$'th entry changes after step $s = N(i,\delta)$ is less than $\delta$.
	
	\begin{Proposition}[\cite{Rumyantsev2013}]
		\label{lwcm}
		Let $\mathbf{\Phi}$ be a rewriting probabilistic Turing machine with Turing functional $\Phi$. Suppose that there is a computable function $N(i,\delta)$ such that the probability that the $i$'th entry changes after step $N(i,\delta)$ is less than $\delta$. Then,
		\begin{enumerate}
			\item For each $i$, $\Pr(\mathbf{\Phi}(i)\downarrow) = 1$.
			\item The output distribution on $\mathbf{\Phi}(i)$ is uniformly computable w.r.t. $i$. 
		\end{enumerate}
	\end{Proposition}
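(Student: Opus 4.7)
The plan is to handle the two claims separately. Part~1 follows directly from the hypothesis by a $\delta$-by-$\delta$ argument, while part~2 requires truncating the computation to a sufficiently late stage $s^*$ and exploiting that, since $\Phi$ is total, the distribution of the truncated tuple $(\Phi^B(i,s^*))_{i<n}$ is itself exactly computable as a finite sum of dyadic rationals.

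For part~1, I would fix $i$ and observe that $\mathbf{\Phi}(B)(i){\uparrow}$ implies that the sequence $s \mapsto \Phi^B(i,s)$ fails to be eventually constant, so in particular the $i$-th entry changes at some stage strictly after $N(i,\delta)$ for every $\delta>0$. By hypothesis this latter event has probability less than $\delta$, so $\Pr(\mathbf{\Phi}(B)(i){\uparrow}) < \delta$ for every rational $\delta > 0$, and hence equals $0$.

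For part~2, to approximate $Q(\Sigma_x)$ for $x \in 2^n$ within a given rational $\epsilon > 0$, I would compute $s^* = \max_{i<n} N(i,\epsilon/n)$ and take $p := \Pr(\Phi^B(i,s^*) = x_i \text{ for all } i < n)$ as the approximation. A union bound applied to the hypothesis shows that the probability that $\mathbf{\Phi}(B)(i) \neq \Phi^B(i,s^*)$ for some $i < n$ is at most $n \cdot (\epsilon/n) = \epsilon$, which, modulo the measure-zero divergence set from part~1, bounds $|Q(\Sigma_x) - p|$. It remains to compute $p$ exactly as a dyadic rational. Since $\Phi$ is a \emph{total} Turing functional, compactness of $2^\omega$ gives a uniform bound $m = m(n,s^*)$ on the use of the computations $\Phi^B(i,s^*)$ for $i < n$; this bound is computable by searching for the least $m$ such that, for every $\sigma \in 2^m$, each $\Phi^\sigma(i,s^*)$ halts without querying beyond position $m$. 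Once $m$ is found, $p$ is the finite sum $2^{-m} \cdot \lvert\{\sigma \in 2^m : \Phi^\sigma(i,s^*) = x_i \text{ for all } i < n\}\rvert$, a computable rational uniformly in $x$ and $\epsilon$. The main subtle step will be justifying and computing the uniform use bound $m$; once that is in hand, combining it with the union bound yields the desired uniformly computable approximation of $Q$.
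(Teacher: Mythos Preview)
Your proposal is correct and essentially identical to the paper's proof: both choose the stage $s^* = \max_{i<n} N(i,\epsilon/n)$, use totality of $\Phi$ together with compactness to obtain a computable uniform use bound $m$, count length-$m$ oracle strings to compute the truncated distribution exactly, and finish with the same union bound for the approximation error. The only cosmetic difference is that you treat part~1 explicitly, whereas the paper folds it into the sufficiency claim at the start of its proof.
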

	We call rewriting probabilistic Turing machines satisfying the conclusions of Proposition~\ref{lwcm} \textbf{layerwise computable mappings}.
	\begin{proof}
		It is sufficient to show that, given rational $\delta > 0$ and $x \in 2^{<\omega},$ we can approximate $Q(\Sigma_x)$ with error at most $\delta$. Let $|x| = \text{length}(x)$. Let 
		\[k = \max(\{N(i,\delta/|x|): i < |x|\}).\] 
		
		Because $\Phi$ is a total Turing functional, there is computable function  $m:\omega \to \omega$ such that $\Phi^{\sigma}(i,n)\converges$ for every $n$, $i<n$ and $\sigma \in 2^{m(n)}$. Therefore, for all $n, i\leq n$ and $B \in 2^\omega$, we have that $\Phi^{B|_{m(n)}}(i,n)\converges.$ We approximate $Q(\Sigma_x)$ by 
		\[
		\begin{split} 
			\widehat{Q}(\Sigma_x) &= \frac{\# \{y \in 2^{m(k)}: \Phi^{y}(i,k)\downarrow = x(i) \text{ for all } i < |x|\}}{2^{m(k)}}\\
			&= \Pr(\Phi^{B|_{m(k)}}(i,k)\converges = x(i) \text{ for all }i < |x|). 
		\end{split}
		\]
		Then, 
		\[
		\begin{split}
			|Q(\Sigma_x) - \widehat{Q}(\Sigma_x)| &\leq \Pr\big((\exists i < |x|)(\exists s > k)[\Phi^I(i,s) \neq \Phi^I(i,k)]\big)\\
			&\leq \sum_{i < |x|} N(i,k)\\
			&\leq \delta,        
		\end{split}
		\]
		so we have successfully approximated $Q(\Sigma_x)$ with error less than or equal to $\delta$.
	\end{proof}
	
	Now that we have a computable output distribution for a layerwise computable mapping, we can find a computable element of its output. 
	\begin{Proposition}
		\label{compelmnt}
		Let $\mathbf{\Phi}$ be a layerwise computable mapping and let $F \subset 2^\omega$ be a closed set such that $\Pr(\Phi \in F) = 1$. Then, $F$ has a computable element. 
	\end{Proposition}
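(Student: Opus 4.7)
The plan is to build a computable $y \in F$ bit by bit, always staying inside prefixes of positive output-measure. Let $Q$ denote the output distribution of $\mathbf{\Phi}$ on $2^\omega$; by Proposition~\ref{lwcm}, $Q(\Sigma_x)$ is uniformly computable in $x$, and $\mathbf{\Phi}$ is total almost surely. Since $Q(F) = \Pr(\mathbf{\Phi} \in F) = 1$, the complement of $F$ is $Q$-null, so for every basic cone $\Sigma_x$ we have $Q(\Sigma_x \cap F) = Q(\Sigma_x)$. In particular, whenever $Q(\Sigma_x) > 0$, the set $\Sigma_x \cap F$ is nonempty, so $x$ has an infinite extension lying in $F$.

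The construction proceeds inductively, maintaining a prefix $y \harpoon n$ together with a positive rational $\epsilon_n$ satisfying $Q(\Sigma_{y \harpoon n}) > \epsilon_n$. Start with $y \harpoon 0$ the empty string and $\epsilon_0 = 1/2$, which is valid since $Q(\Sigma_x) = 1$ when $x$ is empty. Given $y \harpoon n$ and $\epsilon_n$, let $u_b$ denote $y \harpoon n$ extended by the bit $b \in \{0,1\}$. Then
\[
Q(\Sigma_{u_0}) + Q(\Sigma_{u_1}) = Q(\Sigma_{y \harpoon n}) > \epsilon_n,
\]
so at least one of $Q(\Sigma_{u_0}), Q(\Sigma_{u_1})$ exceeds $\epsilon_n / 2$. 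Using the computable approximations to $Q$ provided by Proposition~\ref{lwcm}, search in parallel for a bit $b$ together with a rational lower bound certifying $Q(\Sigma_{u_b}) > \epsilon_n / 3$; this search must halt because the true value of one of the two numbers is at least $\epsilon_n / 2 > \epsilon_n / 3$. Set $y(n) = b$ and $\epsilon_{n+1} = \epsilon_n / 3$.

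By construction $y$ is computable. For every $n$ we have $Q(\Sigma_{y \harpoon n}) > \epsilon_n > 0$, hence $\Sigma_{y \harpoon n} \cap F \neq \emptyset$ by the measure observation above, so every basic open neighborhood of $y$ meets $F$. Since $F$ is closed in $2^\omega$, this forces $y \in F$.

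The only real content of the argument is the measure-theoretic identity $Q(\Sigma_x \cap F) = Q(\Sigma_x)$, which lets \emph{positivity of the cone measure} serve as an effective replacement for the usual compactness-and-K\H{o}nig's-lemma argument. The one technical subtlety is ensuring that the search for each successor bit provably terminates, which is precisely why the explicit positive rational lower bounds $\epsilon_n$ are maintained throughout the construction.
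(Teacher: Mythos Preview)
Your proof is correct and follows essentially the same approach as the paper: build a computable point bit by bit so that every initial segment has positive output measure, then use closedness of $F$ together with $Q(F)=1$ to conclude the limit lies in $F$. Your version is in fact more careful than the paper's on one point: by maintaining explicit rational lower bounds $\epsilon_n$ and searching for a successor with $Q(\Sigma_{u_b}) > \epsilon_n/3$, you make the termination of each step fully explicit, whereas the paper simply asserts that one can ``computably define'' the next bit without spelling out how to effectively detect which branch has positive measure.
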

	\begin{proof}
		Let $Q(\Sigma_x) = \Pr(x \prec \mathbf{\Phi})$. Part 1 of Proposition \ref{lwcm} implies that $Q(\Sigma_x) = Q(\Sigma_{x^\frown 0}) + Q(\Sigma_{x^\frown 1})$, so if $Q(\Sigma_x) > 0$ then either $Q(\Sigma_{x^\frown 0}) > 0$ or $Q(\Sigma_{x^\frown1}) > 0$. Thus, if $Q(\Sigma_x) > 0$, we can computably define a computable $A \in 2^\omega$ such that $x \prec A$ and for each $\tau \prec A$, $Q(\Sigma_\tau > 0)$. If $A \not\in F$, then, since $F$ is closed, there is some $\tau \prec A$ such that $\tau$ has no extensions in $F$. But $Q(\Sigma_\tau) > 0$, contradicting that $X \in F$ with probability 1. Thus, $A \in F$. 
	\end{proof}
	
	\subsection{Layerwise Computable Resample Algorithm}
	To prove Theorem \ref{CLLLL}, it remains to show that the resample algorithm is a layerwise computable mapping. 
	The following proof contains only superficial changes to the proof in \cite{Rumyantsev2013} that the resample algorithm for the (non-lefthanded) LLL is a layerwise computable mapping. 
	Let $\Phi^S(i,s)$ be the value of $x_i$ at stage $s$ of the resample algorithm with random source $B \in \Omega'$. We need to show that the rewriting probabilistic Turing machine $\mathbf{\Phi}$ associated with $\Phi$ is a layerwise computable mapping. We also need to show that none of the bad events $A \in \mathcal{A}$ are true under the valuation of the $x \in \mathcal{X}$ given by $\mathbf{\Phi}(S)$ for almost every $S \in \Omega'$. Then, we can apply Proposition \ref{compelmnt} to the set $F$ of valuations of the $x \in \mathcal{X}$ that make each $A \in \mathcal{A}$ false to show that $F$ has a computable element.
	
	To show that $\mathbf{\Phi}$ is layerwise computable, we need to compute $N(i,\delta)$ such that $x_i$ changes with probability less than $\delta$ after stage $N(i,\delta)$. Because each $x_i$ is involved in finitely many events and the set of events is uniformly computable, it is sufficient to, for each $A \in \mc{A}$ such that $x_i \in \rsp(A)$, find an $M(A,\delta)$ such that the probability that $A$ is resampled after stage $M(A,\delta)$ is less than $\delta$. Fix $A$, and $\delta$. Let $\mc{B} \subset \mc{A}$ be a finite, $\prec$-downward closed set of events such that $\mc{B}$ contains all events of distance $m$ or less from $A$ in the graph $(\mc{A},\nbr)$ for $m$ such that 
	
	\[
	\alpha^{m} \frac{z(A)}{1 - z(A)} \leq \delta/2.
	\] 
	
	Recall that the stopping time $\tau_\mc{b}$ is the first stage such that each $B \in \mc{B}$ is good. The event ``$A$ is resampled after time $t$'' is covered by the events $t < \tau_\mc{b}$ and $t \geq \tau_\mc{b}$. We show that the probability of both of these events effectively converges to 0. 
	
	By Theorem \ref{ALLLL}, we have 
	\[
		\mathbb{E}(\tau_\mc{b}) \leq \sum_{B \in \mc{b}}(1-z(B)). 
	\]
	
	By Markov's inequality, 
	
	\[
	\Pr(\tau_\mc{b} \geq t) \leq \frac{\mathbb{E}(\tau_\mc{b})}{t},
	\]
	which we can solve to find $s$ large enough so that 
	\begin{equation}
		\label{eqn:t1Con}
		\Pr(\tau_\mc{b} \geq s) < \delta/2.
	\end{equation}
	Note that such an $s$ is uniformly computable from $A$, $i$ and $\delta$. We set $M(A,\delta) = s$.	Since $\Pr(\tau_\mc{b} \geq s) < \delta/2$, we also have that 
	\[
	\Pr(A \text{ is resampled after stage } s \text{ and } \tau_\mc{b} \geq s) < \delta/2.
	\]
	
	Now we bound the probability that  $\tau_\mc{b} < s$ and $A$ is resampled at some stage $t_1 >  s > \tau_\mc{B}$. Consider the log of resampling $E_1,E_2,\dots$. Suppose that $A$ is resampled at some stage $t_1 > s > \tau_{\mc{b}}$.  We claim that there must be a sequence of stages $\tau_\mc{b} < t_m < t_{m-1} \dots < t_3< t_2 < t_1$ such that for each $1 \leq j < m$, we have that $E_{t_{j+1}} \in \nbr^+(E_{j})$. 
	
	We prove the claim by induction on the length of the sequence. For the base case, we have that $A \neq E_{\tau_\mc{b}}$ because $A$ is good at stage $\tau_\mc{b}$. Let $n < m$ and suppose there is $\tau_\mc{B}  < t_n < \dots < t_2 < t_1$  such that for each $1 \leq j < n$ we have that $E_{t_{j+1}} \in \nbr^+(E_{t_j}).$ Then, the distance between $E_{t_n}$ and $A$ is at most $n$, so $E_{t_n} \in \mc{B}$. Since $\mc{B}$ is $\prec$-downward closed, $E_{\tau_\mc{b}} \not \in \mc{b}$, we have that $E_{\tau_\mc{b}} \succ E_{t_n}$. Then, by Proposition \ref{LoseProgressAlongNeighbors}, there is $k$ with $\tau_\mc{b} < k < t_n$ such that $E_k \in \nbr^+(E_{t_n})$. Set $t_{n+1} = k$. This completes the proof of the claim.

	Let $X$ be the event that an initial segment of the resample algorithm produces a tree of size at least $m$ with root node labeled $A$. Consider the Moser tree $T$ associated with $E_1,\dots, E_{t_1} = A$. The subsequence $E_{t_m},E_{t_{m-1}},\dots, E_{t_1}$ is a chain of neighbors, so $T$ has at least $m$ vertices and root node labeled $A$. Hence, we have shown if $A$ is resampled after stage $\tau_\mc{b}$, then $X$ is true.  By the second part of Lemma \ref{sumMoserTreeBound}, it follows that
	\begin{align*}
		\Pr(A\text{ is resampled at stage } t_1 > s \text{ and } \tau_\mc{b} < t_1) &\leq \Pr(X)\\
		&\leq \sum_{T \in \mathcal{T}_{A},|T|\geq m } \left(\prod_{v \in T} P^*([v])\right)\\
		&\leq\alpha^{m} \frac{z(A_i)}{1-z(A_i)}\\
		&\leq\delta/2.
	\end{align*}
	 
	 Thus, we have that $\Pr(A \text{ is resampled after stage }s) \leq \delta$. Therefore, the resample algorithm is a layer-wise computable mapping. To apply Proposition~\ref{compelmnt}, we just need to check that the output sequence 
	 \[
	 \lim\limits_{t \to \infty} x_1^{V_t(x_1)}\lim\limits_{t \to \infty} x_2^{V_t(x_2)}\lim\limits_{t \to \infty} x_3^{V_t(x_3)}\dots
	 \]
	 makes each $A \in \mathcal{A}$ false almost surely, and that making each $A \in \mathcal{A}$ false is closed. To see that latter, note that each $A$ being true is open, so the intersection of them all being false is closed. To see the former, fix $A \in \mathcal{A}$ and suppose that $A$ is true in the output sequence. Then, the resample algorithm resamples some $B$ with $B \prec A$ infinitely many times. However, this happens with probability $0$, as we have just shown that as $t$ goes to infinity, the probability that $B$ gets resampled after time $t$ goes to $0$. Thus, the probability that \textit{any} of the countably many $A \in \mathcal{A}$ is true in the output sequence is also $0$. 
	 
	 Thus, the resample algorithm is a layerwise computable mapping which almost surely converges in the closed set $F$ of all valuations of the $x_i \in \mathcal{X}$ that make each $A \in\mathcal{A}$ false. By Proposition \ref{compelmnt}, $F$ has a computable element. This completes the proof of Theorem \ref{CLLLL}.
	
\section{Applications To Non-Repetitive Sequences}
	The study of non-repetitive sequences is said to have originated with Thue's proof that there exists an infinite ternary sequence without any adjacent identical blocks \cite{Thue1906, Thue1912}. Note that this fails spectacularly for binary sequences; when attempting to construct such a sequence, one will immediately conclude that the task is impossible upon trying to extend either $010$ or $101$. Finding sequences with various degrees and types of non-repetitiveness has since become an active area of research (see \cite{Gryctzuk2008} for an overview). In this section, we constructivise and effectivize several generalizations of Thue's theorem whose proofs were previously non-constructive. We begin with our applications to binary sequences. 

	\subsection{Making Long Identical Blocks Far Apart}
	One direction that Thue's result can be generalized to binary sequences is by requiring that long identical intervals be far apart. The following is one of the first applications of the infinite version of the Lov\'asz local lemma, which we restate for convenience.

	\begin{Theorem}[\citeauthor{Beck1981}\cite{Beck1981}]
		\label{thm:Beck}
		Given arbitrary small $\epsilon > 0$, there is some $N_\epsilon$ and an infinite $\{0,1\}$-valued sequence $a_1,a_2, a_3,\dots$ such that any two identical intervals $a_k,\dots,a_{k+n-1}$ and $a_\ell,\dots,a_{\ell + n -1}$ of length $n > N_\epsilon$ have distance $\ell - k$ greater than $(2-\epsilon)^n$.
	\end{Theorem}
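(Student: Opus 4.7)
The plan is to apply Theorem \ref{CLLLL} with variable context $\mc{X} = \{x_1, x_2, \ldots\}$ of i.i.d.\ fair coin flips, after reindexing Beck's bad events so that the computability hypothesis is satisfied. The naive choice --- one event $A_{k,\ell,n}$ per triple --- puts each variable in infinitely many resample sets. Instead, for each pair $k < \ell$ I set $n_{k,\ell} = \max(N_\epsilon + 1, \lceil \log_{2-\epsilon}(\ell - k) \rceil)$ and define a single bad event $A_{k,\ell}$ asserting $x[k, k + n_{k,\ell}) = x[\ell, \ell + n_{k,\ell})$. Since any identical pair of blocks of length $n \geq n_{k,\ell}$ at positions $k, \ell$ forces $A_{k,\ell}$, avoiding every $A_{k,\ell}$ is equivalent to the conclusion. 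Following Lemma \ref{IntervalsWork}, I take $\vbl(A_{k,\ell}) = x[k, \ell + n_{k,\ell})$ and $\rsp(A_{k,\ell}) = x[\ell, \ell + n_{k,\ell})$ (only the right block), with $\prec$ the order induced by $r_A = \ell + n_{k,\ell} - 1$.

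A short periodicity computation gives $\Pr(A_{k,\ell}) = 2^{-n_{k,\ell}}$, and the same bound holds for $P^*(A_{k,\ell})$: conditioning on the static variables $x[k, \ell)$ either pins down the left block outright (disjoint case) or, in the overlapping case, forces the resample region to the unique periodic extension, so only one of $2^{n_{k,\ell}}$ valuations of $\rsp(A_{k,\ell})$ makes $A_{k,\ell}$ true. Two events $A_{k,\ell}$ and $A_{k',\ell'}$ are neighbors iff their right blocks overlap; for each length $n'$ there are at most $n_{k,\ell} + n' - 1$ admissible values of $\ell'$ and at most $(2-\epsilon)^{n'}$ admissible values of $k'$ with $n_{k',\ell'} = n'$, giving at most $(n_{k,\ell} + n' - 1)(2-\epsilon)^{n'}$ neighbors of length $n'$.

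I then pick $\beta \in (1/2, 1/(2-\epsilon))$ --- nonempty precisely because $\epsilon > 0$ --- and set $z(A_{k,\ell}) = C \beta^{n_{k,\ell}}$ for a small rational constant $C$. Then $\sum_{B \in \nbr(A_{k,\ell})} z(B) \leq C \sum_{n' > N_\epsilon} (n_{k,\ell} + n' - 1)(\beta(2-\epsilon))^{n'}$ converges (since $\beta(2-\epsilon) < 1$) to a quantity bounded by $C(a \cdot n_{k,\ell} + b)$ for constants $a,b$ depending only on $\beta$ and $\epsilon$. Using $\ln(1-x) \geq -2x$ for $x \leq 1/2$, Condition~\ref{LeftLocalLemmaCondition} reduces after logarithms to $n_{k,\ell} \ln(2\beta) \geq \ln(1/\alpha) + \ln(1/C) + 2C(a \, n_{k,\ell} + b)$, which holds for every $n_{k,\ell} \geq N_\epsilon + 1$ once $C$ is small enough that $\ln(2\beta) > 2Ca$ and $N_\epsilon$ is taken large enough to absorb the constant terms. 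The computability hypotheses of Theorem \ref{CLLLL} then all follow in a routine way; in particular $\{(k,\ell) : x_i \in \rsp(A_{k,\ell})\}$ is finite, because $x_i \in [\ell, \ell + n_{k,\ell})$ forces $\ell \leq i$ and $i - \ell < n_{k,\ell}$, and the second inequality together with the near-logarithmic growth of $n_{k,\ell}$ in $\ell - k$ confines $\ell$ either to $[\max(1, i - N_\epsilon), i]$ or to a window of width $O(\log i)$ just below $i$, with finitely many $k$ per such $\ell$.

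The main obstacle is the calibration in the previous paragraph: threading $\beta$, $C$, $\alpha$, and $N_\epsilon$ so that the strict inequality in Condition~\ref{LeftLocalLemmaCondition} holds uniformly in $(k,\ell)$, since the bound on $\sum z(B)$ grows linearly in $n_{k,\ell}$ and must be outpaced by $n_{k,\ell} \ln(2\beta)$; this also determines how large $N_\epsilon$ must be taken as a function of $\epsilon$. Once that is in place, Theorem \ref{CLLLL} delivers a computable valuation of $\mc{X}$ under which every $A_{k,\ell}$ is false, which is exactly a computable binary sequence with Beck's non-repetition property.
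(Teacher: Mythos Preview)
Your proposal is correct and follows essentially the same route as the paper: apply Theorem~\ref{CLLLL} with fair-coin variables, take only the minimal-length bad event for each pair of positions, set $\rsp$ equal to the right block, invoke Lemma~\ref{IntervalsWork} for $\prec$, use the periodicity argument to get $P^*=2^{-n}$, and count neighbors by length as $(n+n_0)$ choices of right endpoint times $(2-\epsilon)^n$ choices of left endpoint. The only differences are cosmetic: the paper indexes events by triples $(k,l,n)$ with $l-k=\lceil f(n)\rceil$ rather than by pairs, chooses $z(A)=1/((2-\epsilon)^n n^3)$ instead of your $C\beta^n$ with $\beta\in(1/2,1/(2-\epsilon))$, and verifies Condition~\ref{LeftLocalLemmaCondition} by manipulating the product directly (via $(1-a/x)^x\geq 1-a$ and $\prod(1-a_n)\geq 1-\sum a_n$) rather than taking logarithms.
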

	A game version of Theorem \ref{thm:Beck} is studied in \cite{Pegden2011}, which we can partially effectivize. We defer discussion of game versions to section \ref{section:BinarySequenceGames}.

	As discussed in the introduction, the computable local lemma of Rumyantsev and Shen does not readily yield a computable version of Theorem~\ref{thm:Beck} due to the events $A_{k,\ell,n}$ having infinitely many neighbors. Theorem \ref{CLLLL} shrinks the neighborhood sets and avoids this problem. 
	
	\begin{theorem}[Computable Version of Theorem \ref{thm:Beck}]
		\label{computableBeck}
		There is computable Turing functional $\Phi$ such that, given arbitrary small $\epsilon > 0$, there is some $N_\epsilon$ such that $\Phi(\epsilon)$ is a $\{0,1\}$-valued sequence in which any two identical intervals of length $n > N_\epsilon$ have distance greater than $f(n) = (2-\epsilon)^n$.
	\end{theorem}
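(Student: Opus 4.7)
The plan is to apply Theorem~\ref{CLLLL} to a variable context tailored to Beck's problem. Let $\mc{X} = \{x_1, x_2, \dots\}$ be mutually independent random variables uniformly distributed on $\{0,1\}$. For each integer $d \geq 1$ let $n(d)$ be the least integer $n$ satisfying both $n > N_\epsilon$ and $d \leq (2-\epsilon)^n$, and for each $k, d \geq 1$ define the bad event $A_{k,d} = \{x[k, k+n(d)) = x[k+d, k+d+n(d))\}$. The single-event-per-pair parameterization is crucial: avoiding $A_{k,d}$ automatically rules out longer identical blocks at positions $k, k+d$ (any such block contains the length-$n(d)$ prefix), so the resulting sequence satisfies Beck's bound; and, as I check below, this restriction ensures that each variable lies in only finitely many $\rsp$-sets, which is essential for the computability hypothesis of Theorem~\ref{CLLLL}.

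Set $\vbl(A_{k,d}) = x[k, k+d+n(d))$ and $\rsp(A_{k,d}) = x[k+d, k+d+n(d))$, so Lemma~\ref{IntervalsWork} applies with $i_A = k$, $j_A = k+d$, $r_A = k+d+n(d)-1$, yielding a computable linear order $\prec$ (by right endpoint, with any computable tie-break) that satisfies Condition~\ref{LefthandedRule}. For any valuation $\mu$ of the static variables $\stc(A_{k,d}) = x[k, k+d)$, the equations $x_{k+d+i} = x_{k+i}$ for $i = 0, \dots, n(d)-1$ recursively force each of the $n(d)$ resampled variables to a specific value (using periodicity when $n(d) > d$), so $\Pr(A_{k,d} \mid E_\mu) = 2^{-n(d)}$, and I set $P^*(A_{k,d}) = 2^{-n(d)}$.

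For the LLL condition, choose $z(A_{k,d}) = t^{n(d)}$ with fixed $t \in (1/2,\, 1/(2-\epsilon))$. A neighbor $B = A_{k',d'}$ of $A_{k,d}$ has $\rsp$-interval overlapping that of $A_{k,d}$, so with $m := n(d')$ it satisfies $k'+d' \in [k+d-m+1,\, k+d+n(d)-1]$, giving at most $n(d)+m-1$ positions; and at most $(2-\epsilon)^m$ values of $d'$ satisfy $n(d') = m$. Setting $q := (2-\epsilon)t \in (0,1)$,
\[
\sum_{B \in \nbr(A_{k,d})} z(B) \;\leq\; \sum_{m > N_\epsilon} (n(d)+m-1)\, q^m \;=\; \gamma\, n(d) + C,
\]
where $\gamma = q^{N_\epsilon+1}/(1-q)$ and $C$ is a similar tail term, both tending to $0$ as $N_\epsilon \to \infty$. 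Hence $\prod_{B}(1-z(B)) \geq \exp(-2\gamma n(d) - 2C)$, and the required inequality $P^*(A) \leq \alpha z(A)\prod_{B}(1-z(B))$ reduces to $\bigl(2t\,e^{-2\gamma}\bigr)^{n(d)} \geq e^{2C}/\alpha$, which for $N_\epsilon$ sufficiently large (computable from $\epsilon$) holds uniformly in $n(d) \geq N_\epsilon + 1$ with some fixed rational $\alpha \in (0,1)$.

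The remaining computability hypotheses of Theorem~\ref{CLLLL} are routine: $\vbl$, $\rsp$, the truth-assignments, $\prec$, and the distributions on the $x_i$ are all uniformly computable from $(k, d, \epsilon)$, and for each $x_i$ the set $\{(k,d) : x_i \in \rsp(A_{k,d})\}$ has size $\sum_{d=1}^{i-1} \min(i-d,\, n(d)) = O(i \log i)$ and is computably presented. Theorem~\ref{CLLLL} then yields a computable sequence making every $A_{k,d}$ false, and, as the entire construction is uniform in $\epsilon$, we obtain the required Turing functional $\Phi$. The main obstacle is the LLL verification above: because $\sum_B z(B)$ grows linearly in $n(d)$, one must pick $t$ close to $1/2$ and $N_\epsilon$ large enough that the exponential $t^{n(d)}$ outpaces the penalty $e^{-2\gamma n(d)}$ coming from the neighborhood product.
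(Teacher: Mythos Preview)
Your proposal is correct and follows essentially the same route as the paper: both set up the events $A_{k,d}$ (the paper writes them as $A_{k,\ell,n}$ with $\ell-k$ determining $n$), take $\rsp(A_{k,d})=x[k+d,k+d+n(d))$, invoke Lemma~\ref{IntervalsWork} for the order, verify $P^*(A)=2^{-n(d)}$ via the periodicity argument, and then apply Theorem~\ref{CLLLL}.

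The only substantive difference is in the LLL verification. The paper chooses $z(A)=1/\bigl(f(n)n^{3}\bigr)$ and bounds the neighborhood product directly using $(1-a/x)^{x}\geq 1-a$ together with $\prod(1-a_n)\geq 1-\sum a_n$, whereas you choose the purely exponential weight $z(A)=t^{n}$ with $t\in(1/2,\,1/(2-\epsilon))$, control $\sum_{B}z(B)$ as a geometric tail $\gamma\,n(d)+C$, and pass to the product via $1-x\geq e^{-2x}$. Your computation is a bit cleaner and makes the role of the free parameter $t$ transparent (one only needs $2t\,e^{-2\gamma}>1$, which is automatic once $N_\epsilon$ is large); the paper's choice of $z$ trades this flexibility for explicit series $\sum 1/n^{2}$ and $\sum 1/n^{3}$ that can be bounded without an auxiliary constant. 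Neither approach yields anything the other cannot, and the neighborhood count $(n(d)+m)(2-\epsilon)^{m}$ you use is exactly the paper's $(n_0+n)f(n)$.
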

	
	\begin{proof}
		Fix $\epsilon > 0$ and $N > 0$. We will show that if $N$ is large enough, then $N_\epsilon = N$ witnesses the theorem to be proved. 
		We set up the application of Theorem \ref{CLLLL}. Let $\mathcal{X} = \{x_1,x_2,...\}$ be the bits in our binary sequence. Recall that $A_{k,l,n}$ is the event that $x[k,k+n) = x[l,l+n)$. Note that $A_{k,l,n}$ being false implies that $A_{k,l,m}$ is false for each $m > n$. It is therefore enough to only consider $A_{k,l,n}$ such that $n$ is minimal for $l-k$, that is, such that $l-k = \lceil f(n) \rceil$. Let $\mathcal{A} = \{A_{k,l,n}: l - k = \lceil f(n) \rceil \text{ and } n > N\}$. Let $\rsp(A_{k,l,n}) = [l,l+n)$. Then, $\stc(A_{k,l,n}) = x[k,k+n) \setminus x[l,l+n)$. Note that for each $x_m$, we have that $x_m \in \rsp(A_{k,\ell,n}$ for finitely many $A_{k,\ell,n} \in \mc{A}$, since there are only finitely many $k < \ell$. 
		
		The computability conditions are immediate and the requirement for a linear order follows from Lemma~\ref{IntervalsWork}.
		
		We check the probabilistic conditions. Let $P^*(A_{k,l,n}) = 2^{-n}$. We need to show that for any valuation $\mu$ of $\stc(A_{k,l,n})$, 
		\[
		P^*(A_{k,l,n}) \geq \Pr(A_{k,l,n}|E_\mu).
		\]
		In fact, $P^*(A_{k,l,n}) = \Pr(A_{k,l,n}|E_\mu)$. This is immediate if $\rsp(A_{k,l,n}) \cap x[k,k+n)  = \emptyset$. If $\rsp(A_{k,l,n}) \cap x[k,k+n) \neq \emptyset$, suppose the entries of $x[k,l)$ are given by valuation $\mu$ and $l = k + j$. Then for $A_{k,l,n}$ to be true, $x_{l+i}$ must equal $\mu(x_{k + i})$ for all $i \leq j$ as in the case where the compared intervals don't overlap. Then, when we consider the first non-overlap $x_{l + j}$, still do not have a choice, because $x_{l +j}$ must be equal to $x_l$, whose value must be $\mu(x_{k})$. Thus, there is only one valuation of the variables in $\rsp(A_{k,l,n})$ under which $A_{k,l,n}$ is true when the variables in $\stc(A_{k,l,n})$ have values given by $\mu$. So, $\Pr(A_{k,l,n}|E_\mu) = 2^{-n} = P^*(A_{k,l,n})$. 
		
		Finally, we show that the main local lemma condition holds. Let $z(A_{k,l,n}) = \frac{1}{f(n)n^3}$. We need to show that there is $\alpha \in (0,1)$, such that each $A_{k_0,l_0,n_0} \in \mathcal{A}$,
		\[
		P^*(A_{k_0,l_0,n_0}) = 2^{-n_0} \leq \alpha z(A_{k_0,l_0,n_0}) \prod_{A_{k,l,n} \in \nbr(A_{k_0,l_0,n_0})} (1 - z(A_{k_0,l_0,n_0})).
		\]
		
		For each $k_0,l_0,n_0$ there are at most $(n+n_0)$ many intervals of length $n$ that have non-empty intersection with $x[l_0,l_0+n_0)$. 
		For any given interval $I$ and fixed $\ell$, we have that $I = [l,l + n)$ for at most $f(n)$ many pairs $(k,l)$ such that $l - k \leq f(n)$. Together with the previous observation, we conclude that  $\{(k,l): A_{k,l,n} \in \nbr(A_{k_0,l_0,n_0})\}$ is of size at most $(n + n_0)f(n)$ for each $n$. 
		This, along with the definition of $z(A_{k,l,n})$ and the fact that $0 < 1 - z(A_{k,l,n}) < 1$ for all $k,l,n$, gives us that
		
		\begin{multline*}
			z(A_{k_0,l_0,n_0}) \prod_{A_{k,l,n} \in \nbr(A_{k_0,l_0,n_0})} (1 - z(A_{k,l,n})) \geq \\
			\frac{1}{f(n_0)n_0^3} \prod_{n \geq N} \left(1 - \frac{1}{f(n)n^3}\right)^{(n_0 + n)(f(n))}. 
		\end{multline*}
		Since $(1-\frac{a}{x})^x \geq (1-a)$ for $0 < a < 1$ and $x \geq 1$, the right hand side is greater than or equal to 
		\begin{align}
			&\frac{1}{f(n_0)n_0^3} \prod_{n \geq N} \left(1 - \frac{1}{n^3}\right)^{n_0} \left(1 - \frac{1}{n^2}\right)^{4} \notag \\
			= &\frac{1}{f(n_0)n_0^3} \left( \prod_{n\geq N} \left(1 - \frac{1}{n^3}\right)\right)^{n_0}  \prod_{n \geq N} \left(1 - \frac{1}{n^2}\right). \label{eq:SumToProdLine} 
		\end{align}
		For $0 < a_n < 1$, $\prod_{n \geq N}(1-a_n) \geq 1 - \sum_{n \geq N} a_n$. So, Line \ref{eq:SumToProdLine} is greater than or equal to
		\begin{equation}
			\label{eq:BeckPfSumProduct}
			\frac{1}{f(n_0)n_0^3}\left(1-\sum_{n \geq N}\frac{1}{n^3}\right)^{n_0}\left(1-\sum_{n \geq N}\frac{1}{n^2}\right).
		\end{equation}

		For $N\geq 2$, the two sums in Line \ref{eq:BeckPfSumProduct} are bounded above by $\frac{1}{N-1}$, so Line \ref{eq:BeckPfSumProduct} is greater than or equal to
		\[
		\frac{(2-\epsilon)^{-n_0}}{n_0^3}\left(1 - \frac{1}{N-1}\right)^{n_0 + 1}.
		\]
		If $N$ is large enough, this is greater than or equal to $2P^*(A_{k_0,l_0,n_0}) = 2^{-n_0 + 1}$, fulfilling the local lemma condition with $\alpha = \frac{1}{2}$.
	\end{proof}
	\subsection{Making Adjacent Blocks Very Different}
	Another way that Thue's result can be generalized to binary sequences is by requiring that long adjacent blocks be as different as possible. The following result appears as an exercise in \cite{Alon1992}.
	
	\begin{Theorem}[\cite{Alon1992}]
		\label{thm:Alon}
		Given arbitrary small $\epsilon >0$, there is some $N_\epsilon$ and an infinite $\{0,1\}$-valued sequence $a_1,a_2,a_3,...$ such that any two adjacent intervals of length $n > N_\epsilon$ differ in at least $(\frac{1}{2}-\epsilon)n$ many places. That is, for each $k$ and $n > N_\epsilon$, $a_{k + i} \neq a_{k + n + i}$ for at least $(\frac{1}{2}-\epsilon)n$ many $i$ with $0 \leq i < n$.  
	\end{Theorem}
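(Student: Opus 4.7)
The plan is to apply Theorem \ref{CLLLL} in direct analogy with the proof of Theorem \ref{computableBeck}, which will produce not just existence but a computable witness as well. Fix $\epsilon > 0$ and let $N$ be a threshold to be chosen. Let $\mc{X} = \{x_1,x_2,\ldots\}$ be independent uniform $\{0,1\}$-valued random variables. For each $k \geq 1$ and $n > N$, let $A_{k,n}$ be the bad event that $|\{0 \leq i < n : x_{k+i} = x_{k+n+i}\}| > (\tfrac{1}{2}+\epsilon)n$, equivalently that the adjacent blocks $x[k,k+n)$ and $x[k+n,k+2n)$ differ in fewer than $(\tfrac{1}{2}-\epsilon)n$ positions. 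Set $\vbl(A_{k,n}) = x[k,k+2n)$, $\rsp(A_{k,n}) = x[k+n,k+2n)$, and $\stc(A_{k,n}) = x[k,k+n)$. Each $x_m$ lies in $\rsp(A_{k,n})$ for only finitely many $(k,n)$, and by Lemma \ref{IntervalsWork} the right-endpoint linearization $\prec$ on $\mc{A}$ is a computable linear order of type $\omega$ satisfying Condition \ref{LefthandedRule}.

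For the probabilistic bound, fix any valuation $\mu$ of $\stc(A_{k,n})$. Conditioned on $\mu$, each indicator $\mathbbm{1}[x_{k+i} = x_{k+n+i}]$ depends only on the single resample variable $x_{k+n+i}$, and these $n$ indicators are independent Bernoulli$(1/2)$. Hoeffding's inequality then gives $\Pr(A_{k,n} \mid E_\mu) \leq e^{-2\epsilon^2 n}$, so I set $P^*(A_{k,n}) = e^{-2\epsilon^2 n}$. Events $A_{k_0,n_0}$ and $A_{k,n}$ are neighbors exactly when $[k_0+n_0, k_0+2n_0) \cap [k+n, k+2n) \neq \emptyset$, giving at most $n+n_0$ neighbors per block length $n$.

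Next I would choose $z(A_{k,n}) = \gamma^n/n^3$ for a rational $\gamma$ with $e^{-2\epsilon^2} < \gamma < 1$ and verify the local lemma condition by repeating the algebraic manipulations in the proof of Theorem \ref{computableBeck}: use the inequalities $(1 - a/x)^x \geq 1 - a$ and $\prod (1 - a_n) \geq 1 - \sum a_n$ to reduce $\prod_{n > N}(1 - \gamma^n/n^3)^{n+n_0}$ to an expression of the form $D(N)^{n_0}(1 - o_N(1))$, where $D(N) \to 1$ as $N \to \infty$. For $N$ large enough (chosen computably in $\epsilon$ and $\gamma$), this yields $P^*(A_{k_0,n_0}) \leq \tfrac{1}{2} z(A_{k_0,n_0}) \prod_{B \in \nbr(A_{k_0,n_0})}(1 - z(B))$, so Theorem \ref{CLLLL} applies with $\alpha = 1/2$ and produces a (computable) binary sequence avoiding every $A_{k,n}$.

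The main obstacle is the narrowness of the permissible window for $\gamma$. Whereas Beck's theorem has $P^*(A_{k,\ell,n}) = 2^{-n}$, leaving the generous gap $(1/2, 1)$ in which to select an exponential base, here the Chernoff rate $e^{-2\epsilon^2}$ is close to $1$ for small $\epsilon$. One must verify that the small amount of extra room generated by the $1/n^3$ weight and by truncating the neighborhood product at $n > N$ (producing corrections $\sum_{n > N} \gamma^n/n^2$ and $\sum_{n > N} \gamma^n/n^3$, both going to zero) genuinely fits inside the window $(e^{-2\epsilon^2}, 1)$; this quantitative balancing is exactly what determines an effective value of $N_\epsilon$.
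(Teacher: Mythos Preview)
Your proposal is correct and follows essentially the same route as the paper's proof of the computable version (Theorem~\ref{computableAlon}), which of course implies the bare existence statement of Theorem~\ref{thm:Alon}. The only cosmetic differences are that the paper bounds the binomial tail directly via ${n \choose \lceil(\tfrac12+\epsilon)n\rceil} \le (\alpha\cdot 2)^n$ rather than quoting Hoeffding, and takes $z(A_{k,n}) = b^n/n$ with $\alpha^{1/2} < b < 1$ rather than your $\gamma^n/n^3$; both choices feed into the same product manipulation and yield the same conclusion, and your worry about the ``narrow window'' for $\gamma$ is not a genuine obstacle since any $\gamma \in (e^{-2\epsilon^2},1)$ works once $N$ is taken large enough.
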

	
	Pegden also studies game versions of Theorem~\ref{thm:Alon}. We discuss this in Section~\ref{section:BinarySequenceGames}.
	
	Theorem~\ref{CLLLL} also effectivize Theorem~\ref{thm:Alon}. 
	
	\begin{Theorem}[Computable Version of Theorem \ref{thm:Alon}]
		\label{computableAlon}
		Given arbitrary small $\epsilon >0$ there is some $N_\epsilon$ and a computable $\{0,1\}$-valued sequence $a_1,a_2,a_3,...$ such that any two adjacent intervals of length $n > N_\epsilon$ differ in at least $(\frac{1}{2}-\epsilon)n$ many places; that is, for each $k$ and $n > N_\epsilon$, $a_{k + i} \neq a_{k + n + i}$ for at least $(\frac{1}{2}-\epsilon)n$ many $i$ with $0 \leq i < n$.  
	\end{Theorem}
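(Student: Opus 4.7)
The plan is to adapt the proof of Theorem~\ref{computableBeck} to an application of Theorem~\ref{CLLLL} whose bad events encode adjacent blocks that are too similar. Fix $\epsilon > 0$ and a threshold $N$ to be chosen later. Let $\mathcal{X} = \{x_1, x_2, \ldots\}$ be the bits. For each pair $(k,n)$ with $n > N$, let $A_{k,n}$ be the event that the adjacent intervals $x[k, k+n)$ and $x[k+n, k+2n)$ agree in strictly more than $(\frac{1}{2} + \epsilon)n$ positions, equivalently differ in strictly fewer than $(\frac{1}{2} - \epsilon)n$ positions. Set $\vbl(A_{k,n}) = x[k, k+2n)$, $\rsp(A_{k,n}) = x[k+n, k+2n)$, and hence $\stc(A_{k,n}) = x[k, k+n)$. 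Each $x_m$ lies in only finitely many $\rsp(A_{k,n})$, and Lemma~\ref{IntervalsWork} supplies the required linear order; the remaining computability hypotheses of Theorem~\ref{CLLLL} are routine.

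To bound $P^*$, I would observe that conditional on any valuation $\mu$ of $\stc(A_{k,n})$, the number of positions in $\rsp(A_{k,n})$ that agree with the fixed values of $\stc(A_{k,n})$ is a sum of $n$ independent Bernoulli$(1/2)$ random variables. Hoeffding's inequality then gives $\Pr(A_{k,n} \mid E_\mu) \leq e^{-2\epsilon^2 n}$, so setting $P^*(A_{k,n}) := e^{-2\epsilon^2 n}$ fulfills Condition~\ref{PStarCondition}. For the neighborhood count, fixing $n'$, the number of $k'$ with $\rsp(A_{k',n'})$ overlapping $\rsp(A_{k,n})$ is at most $n + n'$.

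With $z(A_{k,n}) := 1/n^3$, the same manipulations used in the proof of Theorem~\ref{computableBeck} --- the bounds $(1 - a/x)^x \geq 1 - a$ and $\prod(1 - a_n) \geq 1 - \sum a_n$ together with $\sum_{n > N} 1/n^2 \leq 1/(N-1)$ --- yield
\[
z(A_{k,n_0})\prod_{B \in \nbr(A_{k,n_0})}(1 - z(B)) \geq \frac{1}{n_0^3}\Bigl(1 - \tfrac{1}{N-1}\Bigr)^{n_0 + 1}.
\]
The main obstacle is then verifying Condition~\ref{LeftLocalLemmaCondition}: we need $e^{-2\epsilon^2 n_0} \leq \tfrac{1}{2}\cdot\frac{1}{n_0^3}(1-\tfrac{1}{N-1})^{n_0+1}$ for all $n_0 > N$. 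Unlike the Beck setting, where $P^*$ itself carries the parameter $\epsilon$ through $(2-\epsilon)^{-n}$, here $\epsilon$ enters only via the Hoeffding constant $2\epsilon^2$, and the required inequality after taking logarithms amounts to choosing $N$ large enough that $-\log(1 - \tfrac{1}{N-1}) < 2\epsilon^2$. For such $N$ the cubic factor $n_0^3$ is absorbed for all sufficiently large $n_0 > N_\epsilon$, with $N_\epsilon$ computable in $\epsilon$. Theorem~\ref{CLLLL} then produces a computable valuation of $\mathcal{X}$ making every $A_{k,n}$ false, which is the desired computable binary sequence.
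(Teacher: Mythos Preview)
Your proposal is correct and follows the same overall route as the paper: the same bad events $A_{k,n}$, the same choice $\rsp(A_{k,n})=x[k+n,k+2n)$, the same appeal to Lemma~\ref{IntervalsWork}, and the same application of Theorem~\ref{CLLLL}. The only differences are in the auxiliary bounds: the paper takes $P^*(A_{k,n})$ to be the exact binomial tail $2^{-n}\sum_{r\geq \lceil(\frac12+\epsilon)n\rceil}\binom{n}{r}$ and bounds it by $n\alpha^n$ for some $\alpha<1$ via the central-binomial estimate, whereas you use Hoeffding to get $e^{-2\epsilon^2 n}$ directly; and the paper sets $z(A_{k,n})=b^n/n$ with $\alpha^{1/2}<b<1$, matching the exponential decay of $P^*$, while your polynomial choice $z=1/n^3$ works just as well because the exponential $P^*$ still dominates the resulting $\frac{1}{n_0^3}(1-\tfrac{1}{N-1})^{n_0+1}$ once $N$ is large. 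These are interchangeable technical choices; your version is arguably a bit cleaner.
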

	\begin{proof} 
		Fix $\epsilon > 0$ and $N > 0$. We will show that if $N$ is large enough, then $N_\epsilon = N$ witnesses the theorem. 
		We again apply the lefthanded computable  Lov\'asz local lemma.  Let $\mathcal{X} = \{x_1,x_2,...\}$ be the bits in our binary sequence. Let $A_{k,n}$ be the event that blocks $x[k,k+n)$ and $x[k+n,k+2n)$ share at least $(\frac{1}{2} + \epsilon)n$ many entries. Let $\mathcal{A} = \{A_{k,n}: n > N\}$ and $\rsp(A_{k,n}) = x[k+n,k+2n)$. Then, $\stc(A_{k,n}) = x[k+n,k+2n)$.
		
		The computability conditions are immediate and the requirement for a linear order follows from Lemma~\ref{IntervalsWork}.
		
		We check the probabilistic conditions. The probability that $\rsp(A_{i,n})$ shares \textit{exactly} k entries with $\stc(A_{i,n})$ is ${n \choose k} 2^{-k}$, so we set
		\[
		P^*(A_{i,n}) = \Pr(A_{i,n}) = 2^{-n} \sum_{r = \lceil (\frac{1}{2} + \epsilon)n  \rceil}^n {n \choose r}. 
		\]
		We need to show that for any valuation $\mu$ of $\stc(A_{i,n})$, 
		\[
		P^*(A_{k,l,n}) \geq \Pr(A_{i,n}|E_\mu).
		\]
		
		We have that $\rsp(A_{k,n}) \cap x[k,n+k) = \emptyset$ for all $k$ and $n$, so $\Pr(A_{i,n}|E_\mu) = \Pr(A_{i,n})$.  Since the greatest of the ${n \choose r}$ is ${n \choose \lceil(\frac{1}{2}+\epsilon)n\rceil}$, 
		\[
		\Pr(A_{i,b}) \leq n 2^{-n} {n \choose \lceil(\frac{1}{2}+\epsilon)n\rceil}.
		\]
		
		It is known that there is an $\alpha < 1$ such that ${N \choose \lceil(\frac{1}{2} + \epsilon)N \rceil} < (\alpha 2)^N$ for large enough $N$, so there is $N_\epsilon$ such that for $n > N_\epsilon$,
		\[
		\Pr(A_{i,n}) < n 2^{-n} (\alpha 2)^n = n\alpha^n,
		\]
		which has limit $0$ as $n \to \infty$.
		
		Finally, it remains to show that the local lemma condition holds. Let $z_{i,n} = \frac{b^n}{n}$ for some $\alpha^{1/2} < b < 1$. Fix $A_{i_0, n_0}$. For each $n$, the neighborhood $A_{k_0,n_0}$ has at most $n_0 + n$ elements $B$ with $|\rsp(B) | = n$, so it suffices for the Lovasz local lemma to check that 
		\[
		\frac{1}{2}b^n \prod_{n = N_\epsilon}^{\infty}(1 - (b/n)^n)^{n + n_0} \geq n_0\alpha^{n_0}. 
		\]
		for large enough $N_\epsilon$, with $\frac{1}{2}$ being the $\alpha < 1$ from the computable lefthanded Lov\'asz local lemma. 
		
		The left-hand side is equal to 
		
		\[\begin{split}
			\frac{b^{n_0}}{n_0}\left(\prod_{n = N_\epsilon}^\infty \left(1 - \frac{b^{n}}{n}\right)^{2n_0}\right) \left(\prod_{n = N_\epsilon}^\infty \left(1 - \frac{b^n}{n}\right)^{2n}\right) &\geq \frac{b^{n_0}}{2n_0}\left(\prod_{n = N_\epsilon}^\infty \left(1 - b^n\right)^{n_0}\right) \left(\prod_{n = N_\epsilon}^\infty \left(1 - b^n\right)\right)\\
			&\geq \frac{b^{n_0}}{2n_0}\prod_{n = N_\epsilon}^\infty \left(1 - b^n\right)^{n_0 + 1}\\
			&\geq \frac{b^{n_0}}{2n_0} \left(1 - \sum_{n = N_\epsilon}^\infty b^n \right)^{n_0 + 1}\\
			&=\frac{b^{n_0}}{2n_0}\left(1 - \frac{b^{N_\epsilon}}{1-b}\right)^{n_0+1}\\
			&\geq \frac{b^{n_0}}{2n_0} (b^{n_0 + 1})\\
			&= \frac{ b^{2n_0+1}}{2n_0}\\
			&\geq n_0\alpha^{n_0},
		\end{split}
		\]

		where the last three lines are for $n_0$ large enough (adjust $N_\epsilon$ accordingly). 
	\end{proof}

	\subsection{Defeating Strategies in Binary Sequence Games}
	\label{section:BinarySequenceGames}
	
	Although we obtain computable versions of Theorems~\ref{thm:Beck} and \ref{thm:Alon} from Theorem~\ref{CLLLL}, their non-constructive versions do not require the lefthanded local lemma. Instead, Pegden uses the lefthanded local lemma to analyse \textit{game versions} of these theorems. 
	
	 Consider a game called the \textit{binary sequence game} in which two Players take turns selecting bits in a binary sequence. Player 1 picks the odd bits and Player 2 picks the even bits. The binary sequence game generates the sequence
	\[
	a_1a_2a_3 a_4 a_5 a_6 \dots = e_1 d_2 e_3 d_4 e_5 d_6 \dots
	\]
	where $e_{2n+1}$ is chosen by Player 1 with knowledge of all preceding bits in the sequence (but not future bits) and Player 2 chooses $d_{2n}$ similarly. Pegden found that Player 1 has a strategy to ensure that binary sequence game produces a non-repetitive sequence, as stated below.
	
	\begin{theorem}[\cite{Pegden2011}]
		\label{thm:Beck2}
		For every $\epsilon > 0$ there is an $N_\epsilon$ such that Player 1 has a strategy in the binary sequence game ensuring that any two identical blocks $x[i,i + n) = x_i x_{i+1} x_{i+2}\dots x_{i+n-1}$ and $x[j,j + n) = x_j x_{j+1} x_{j+2}\dots x_{j+n-1}$ of length $n > N_\epsilon$ have distance at least $f(n) = (2-\epsilon)^{n/2}$. That is, if $x_{i+s} = x_{j+s}$ for all $0\leq s <n$ and $n > N_\epsilon$, then $|i -j| < (2-\epsilon)^{n/2}$.  
	\end{theorem}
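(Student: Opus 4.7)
The plan is to apply the non-constructive LLLL (Theorem~\ref{LLLL}) to Player 1's randomized strategy against an arbitrary fixed deterministic strategy $\tau$ for Player 2, then invoke Gale--Stewart determinacy to extract a deterministic winning strategy for Player 1. Fix $\tau$; let Player 1 play each odd-position bit $e_{2k+1}$ independently and uniformly at random, so the entire sequence $a_1 a_2 \cdots$ becomes a function of the mutually independent variables $\mathcal{X} = \{e_{2k+1}\}$. For $i < j$ with $n > N_\epsilon$ and $j - i < (2-\epsilon)^{n/2}$, let $A_{i,j,n}$ be the event that $x[i,i+n) = x[j,j+n)$; by the minimality trick used in Theorem~\ref{computableBeck}, it suffices to consider $A_{i,j,n}$ with $n$ minimal for the given $(i,j)$.

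I would define the priority by $A_{i,j,n} \prec A_{i',j',n'}$ iff $j + n < j' + n'$ (with an arbitrary tie-break), matching the position at which each event is resolved in the game, and declare two events neighbors exactly when their ``extents'' $[i, j+n)$ and $[i', j'+n')$ intersect. Then $B \ll A$ means $B$'s extent lies strictly to the left of $A$'s extent, and Condition~\eqref{LOCondition} follows easily from these being intervals. I would set $P^*(A_{i,j,n}) = 2^{-\lceil n/2 \rceil}$ and justify it as follows: any $B$ with $B \ll A_{i,j,n}$ is determined by bits strictly before position $i$, so conditioning on $\bigcap_{B \in \mathcal{B}} \bar{B}$ (for $\mathcal{B} \subseteq \{B : B \ll A_{i,j,n}\}$) does not disturb the joint distribution of the Player 1 bits in $[i, \infty)$. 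Conditioning further on any valuation of the bits in $[i, j)$ pins down both $x[i, i+n)$ and, via $\tau$, the Player 2 bits in $[j, j+n)$ as deterministic functions of the Player 1 bits in $[j, j+n)$; so $A_{i,j,n}$ reduces to $\lceil n/2 \rceil$ equality constraints on the independent uniform Player 1 bits in $[j, j+n)$, giving probability at most $2^{-\lceil n/2 \rceil}$.

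With $z(A_{i,j,n}) = 1 / (n^{3}(2-\epsilon)^{n/2})$ and the number of neighbors $A_{i',j',n'}$ with given $n'$ bounded by $O((n + n')(2-\epsilon)^{n'/2})$, the LLLL numerical condition $P^*(A) \leq z(A) \prod_{B \in \nbr(A)}(1 - z(B))$ becomes a direct adaptation of the calculation in Theorem~\ref{computableBeck}, with $(2-\epsilon)^{n/2}$ everywhere in place of $(2-\epsilon)^{n}$; for large enough $N_\epsilon$ this holds. Since Theorem~\ref{LLLL} is stated for finite event sets, I would apply it to the finite subsets $\mathcal{A}_{\leq N} = \{A_{i,j,n} \in \mathcal{A} : j+n \leq N\}$, obtaining a lower bound on $\Pr\bigl(\bigcap_{A \in \mathcal{A}_{\leq N}} \bar{A}\bigr)$ that is uniform in $N$; continuity of measure from above then gives $\Pr\bigl(\bigcap_{A \in \mathcal{A}} \bar{A}\bigr) > 0$. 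Thus, against every deterministic $\tau$, random Player 1 wins with positive probability. Player 1's winning condition is closed (a countable intersection of clopen events), so the game is determined by Gale--Stewart; if Player 2 had a winning strategy it would defeat the random Player 1 almost surely, contradicting the positive probability just obtained, so Player 1 has a deterministic winning strategy.

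The main obstacle is verifying the $P^*$ bound: the whole point of choosing the neighborhood graph and the priority $\prec$ is to ensure that events $B \ll A$ all live entirely in the game's past relative to $A$, despite the adaptive dependence of Player 2's bits on earlier Player 1 bits. This is precisely where the LLLL is indispensable rather than the ordinary LLL---the probability bound only holds under conditioning on the history, never as a statement of independence from every non-neighbor.
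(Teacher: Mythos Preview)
Your overall route---fix a Player 2 strategy, apply the LLLL to random Player 1 play, then invoke determinacy---is exactly the paper's. But there is a mismatch between your neighborhood graph and your neighbor count. You take $A_{i,j,n}$ and $A_{i',j',n'}$ to be neighbors when the full extents $[i,j+n)$ and $[i',j'+n')$ intersect, and then quote the bound $O\bigl((n+n')(2-\epsilon)^{n'/2}\bigr)$ on size-$n'$ neighbors. That bound belongs to the \emph{resample}-overlap graph (only $[j,j+n)\cap[j',j'+n')\neq\emptyset$) used in Theorem~\ref{computableBeck}; for extent overlap the correct count is roughly $f(n)+f(n')$, since with the minimality trick there is one event per value of $i'$, and an interval of length $f(n')+n'$ can be placed in about $f(n)+f(n')+n+n'$ positions to meet one of length $f(n)+n$. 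The stray $f(n)$ term is fatal: in the LLLL product it contributes a factor $\bigl(\prod_{n'}(1-z_{n'})\bigr)^{f(n)}=e^{-Kf(n)}$ for some $K>0$, which decays super-exponentially in $n$ and makes the inequality unsatisfiable for any choice of $z$. The repair is to use the directed graph with $A_{i',j',n'}\in\Gamma(A_{i,j,n})$ iff $[j',j'+n')$ meets $[j,j+n)$; then Condition~\eqref{LOCondition} is an easy interval check, your $P^*$ argument still works (because $B\notin\Gamma^+(A)$ with $B\preceq A$ now forces $j'+n'\le j$, so $B$ is already determined by Player 1 bits before position $j$), and the count $(n+n')f(n')$ is then legitimate.

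A smaller second issue: the LLLL bound on $\Pr\bigl(\bigcap_{A\in\mathcal{A}_{\leq N}}\bar A\bigr)$ is $\prod_{A\in\mathcal{A}_{\leq N}}(1-z(A))$, which tends to $0$ as $N\to\infty$ (there are infinitely many events for each block length), so continuity of measure does not deliver $\Pr\bigl(\bigcap_{A\in\mathcal{A}}\bar A\bigr)>0$. You only obtain nonemptiness of each finite intersection; pass to the infinite intersection by compactness (K\"onig's lemma), as the paper does, and then finish with determinacy exactly as you wrote.
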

	
	\begin{theorem}[\cite{Pegden2011}]
		\label{thm:Alon2}
		For every $\epsilon > 0$ there is an $N_\epsilon$ such that Player 1 has a strategy in the binary sequence game ensuring that any two adjacent blocks $x[i,i + n)$ and $x[i+n, i + 2n)$ of length $n > N_\epsilon$ differ in at least $n\left(\frac{1}{4}-\epsilon\right)$ places. That is, if $n > N_\epsilon$ then $x_{i+s} \neq x_{i+n+s}$ for at least $n\left(\frac{1}{4}-\epsilon\right)$ many $0 \leq s < n$. 
	\end{theorem}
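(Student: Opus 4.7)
The plan is to follow Pegden's approach of applying the LLLL (Theorem~\ref{LLLL}) in the game setting, paralleling the probabilistic setup used for the non-game Theorem~\ref{thm:Alon}. Model Player 1's strategy as independent uniform random bits $y_1, y_2, \ldots$, with $y_k$ being Player 1's move at position $2k-1$. For each $(i,n)$ with $n > N_\epsilon$ and each possible sequence $\bar{d}$ of Player 2's bits at the even positions in $[i, i+2n)$, define the bad event $A_{i,n,\bar{d}}$ over $y$: if Player 2's bits in that range are $\bar{d}$, then the adjacent blocks $x[i, i+n)$ and $x[i+n, i+2n)$ differ in fewer than $n(\frac{1}{4} - \epsilon)$ places. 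Since any adversarial online Player 2 realizes \emph{some} $\bar{d}$ on each run, a $y$ avoiding every $A_{i,n,\bar{d}}$ is a winning pre-committed Player 1 strategy. Use Lemma~\ref{IntervalsWork} with $\rsp(A_{i,n,\bar{d}})$ equal to Player 1's coin flips at odd positions in $[i, i+2n)$, ordered by the right endpoint $i+2n$.

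The probabilistic input is that $\Pr(A_{i,n,\bar{d}}) \leq e^{-c\epsilon^2 n}$ uniformly in $\bar{d}$, for some constant $c > 0$. Let $S = \{s \in [0,n) : i+n+s \text{ odd}\}$, a set of size at least $\lfloor n/2 \rfloor$. For each $s \in S$, the indicator $D_s = \mathbf{1}(x_{i+s} \neq x_{i+n+s})$ involves a fresh fair coin $x_{i+n+s}$ (Player 1's bit) independent of $x_{i+s}$, which is either a previously flipped Player 1 coin or a fixed value from $\bar{d}$. By iterated conditioning, $(D_s)_{s \in S}$ is a family of independent $\mathrm{Bernoulli}(1/2)$ variables; Hoeffding's inequality then yields the claimed bound, since the total difference count in the blocks is at least $\sum_{s \in S} D_s$.

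The main obstacle is selecting $z(A_{i,n,\bar{d}})$ and verifying the LLLL inequality $P^*(A) \leq z(A)\prod_{B \in \nbr(A)}(1-z(B))$ in the face of $2^n$ choices of $\bar{d}$ per $(i,n)$: with a $z$ carrying factors of both $2^{-n}$ and polynomial-in-$n$ denominators, the product $\prod_{B \in \nbr(A)}(1-z(B))$ can be made bounded below by a sum-to-product estimate as in the proof of Theorem~\ref{computableAlon}, but the LLLL inequality for small $\epsilon$ requires a careful balance of the $2^{O(n)}$ enlargement of the neighborhood against the $e^{-c\epsilon^2 n}$ probability bound. This is the game-theoretic heart of Pegden's argument, and handling it is what motivates the lefthanded refinement of the local lemma. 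Once the LLLL applies, it produces a Player 1 bit sequence $y$ (computable via Theorem~\ref{CLLLL} if one additionally wants constructivity) avoiding all $A_{i,n,\bar{d}}$, which is precisely a winning pre-committed strategy for Player 1.
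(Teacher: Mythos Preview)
Your proposal has a fatal flaw: the family of events you set up has empty common complement, so no local-lemma argument can possibly succeed. You seek a single pre-committed Player~1 sequence $y$ avoiding $A_{i,n,\bar d}$ for \emph{every} Player~2 bit pattern $\bar d$ on the even positions of $[i,i+2n)$. But take any odd $n>N_\epsilon$. Then for each $s\in[0,n)$ exactly one of $i+s$ and $i+n+s$ is even, so each comparison $x_{i+s}$ versus $x_{i+n+s}$ pairs a Player~1 bit with a Player~2 bit. Given any $y$, the particular $\bar d$ that sets each Player~2 bit equal to its paired Player~1 bit makes the two blocks identical; thus every $y$ lies in some $A_{i,n,\bar d}$, and $\bigcap_{\bar d}\overline{A_{i,n,\bar d}}=\emptyset$. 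The $2^n$ blow-up you flag is therefore not a ``careful balance'' issue to be finessed but an outright obstruction: the LLLL inequality must fail, and indeed with $2^n$ mutual neighbors one would need $e^{-c\epsilon^2 n}\lesssim 2^{-n}$, which is false for small $\epsilon$. This is also why the paper's Proposition~\ref{prop:noUniform} rules out any pre-committed (non-adaptive) Player~1 strategy.

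The approach actually used (by Pegden, and in the paper) is different in a crucial way: one first \emph{fixes} a Player~2 strategy $g$ and applies the LLLL against that single $g$. The random variables are Player~1's odd-indexed bits; for each $(k,n)$ there is \emph{one} event $A_{k,n}$ (not $2^n$ of them), with $\vbl(A_{k,n})$ equal to \emph{all} of Player~1's bits before position $k+2n$---necessary because $g$'s moves depend on the full history---and $\rsp(A_{k,n})$ equal to Player~1's bits in $[k+n,k+2n)$. The $P^*$ bound is obtained by conditioning on $\stc(A_{k,n})$, which fixes the entire play up through position $k+n-1$ and hence the first block; one then bounds the probability that Player~1's roughly $n/2$ fresh coins produce at least $(1/4+\epsilon)n$ matches in the second block, exactly as in the paper's proof of Theorem~\ref{AlonComputableGame}. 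This yields, for every $g$ and every finite length $M$, a Player~1 play defeating $g$ up to stage $M$; K\"onig's lemma then gives an infinite play defeating $g$, and open determinacy converts ``no winning Player~2 strategy'' into ``a winning Player~1 strategy.'' It is this last determinacy step---not a union over $\bar d$ inside the LLLL---that makes Player~1's strategy adaptive.
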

	
	Unlike Theorems \ref{thm:Beck} and \ref{thm:Alon}, even the finite versions of these theorems cannot be proven using the classical local lemma. This is because Player two can change their moves depending on what Player one does. This means that, when setting up the local lemma to find a strategy for Player 1, each event depends on every move that came before it. For example in the context of Theorem $\ref{thm:Beck2}$, the event $A_{k,l,n}$ that the intervals $x[i,i+n)]$ and $x[l,l+n)$ are identical depends not only on the moves Player 1 makes in those intervals, but on every move Player 1 has made beforehand. This explodes the size of the neighborhood relation. To prove Theorems \ref{thm:Beck2} and \ref{thm:Alon2}, Pegden introduces an extension of the LLL called the lefthanded local lemma. Pegden uses the lefthanded local lemma to prove the following finite version of Theorem \ref{thm:Beck2}. 
	
	\begin{Proposition}
		\label{Yeach}
		For every $\epsilon > 0$ there is an $N_\epsilon$ such that, for each $M>0$ and Player 2 strategy $g$ in the binary sequence game of length $M$ length, there is a sequence $e_1e_3e_5...e_{M}$ of Player 1 moves that when played against $\Phi$, any two identical blocks of length $n > N_\epsilon$  in the resulting sequence have distance at least $(2-\epsilon)^{n/2}$.  
	\end{Proposition}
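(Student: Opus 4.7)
The plan is to apply Theorem~\ref{ALLLL} after fixing Player 2's length-$M$ strategy $g$, which reduces the game to a pure randomness problem over Player 1's bits. Let $\mc{x} = \{e_i : 1 \leq i \leq M,\ i \text{ odd}\}$, each an independent uniform bit. For each triple $(k,\ell,n)$ with $n > N_\epsilon$, $\ell - k = \lceil (2-\epsilon)^{n/2} \rceil$, and $\ell + n - 1 \leq M$, let $A_{k,\ell,n}$ be the event that the realized sequence (Player 2's bits computed from earlier bits via $g$) satisfies $x[k,k+n) = x[\ell,\ell+n)$. As in the proof of Theorem~\ref{computableBeck}, restricting to the minimal $n$ for each distance suffices. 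Because Player 2's bit at position $p$ may depend on the entire prior history, I must take $\vbl(A_{k,\ell,n}) = \{e_i : 1 \leq i \leq \ell+n-1,\ i \text{ odd}\}$. I then set $\rsp(A_{k,\ell,n}) = \{e_i : \ell \leq i \leq \ell+n-1,\ i \text{ odd}\}$, so the static variables are exactly Player 1's moves at positions below $\ell$, and the linear order required by Theorem~\ref{ALLLL} is supplied by Lemma~\ref{IntervalsWork}.

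The crux is showing that $P^*(A_{k,\ell,n}) := 2^{-\lfloor n/2 \rfloor}$ dominates $\Pr(A_{k,\ell,n} \mid E_\mu)$ for every static valuation $\mu$. I plan to argue that at most one valuation of $\rsp(A_{k,\ell,n})$ realizes the event given $\mu$. Fixing $\mu$ determines $x_1,\dots,x_{\ell-1}$ through $g$, and hence the prefix $x[k,\ell)$ of $x[k,k+n)$. Then walk through positions $\ell, \ell+1, \dots, \ell+n-1$ in order: at a Player 2 position $\ell+i$, the bit $x_{\ell+i}$ is forced by $g$ applied to the already-fixed prefix; at a Player 1 position $\ell+i$, the constraint $x_{\ell+i} = x_{k+i}$ pins down $e_{\ell+i}$ uniquely, because $x_{k+i}$ is either in the fixed prefix (if $k+i<\ell$) or was already determined at an earlier step of the walk (if $k+i\geq\ell$, then $k+i = \ell+(i-(\ell-k)) < \ell+i$). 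This handles the overlapping regime $\ell-k < n$.

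With $z(A_{k,\ell,n}) := 1/(f(n) n^3)$ for $f(n) = (2-\epsilon)^{n/2}$, the inequality $P^*(A) \leq z(A)\prod_{B\in\nbr(A)}(1-z(B))$ reduces to a near-verbatim copy of the estimate in the proof of Theorem~\ref{computableBeck}: for each $n$, the neighborhood of $A_{k_0,\ell_0,n_0}$ contains at most $(n + n_0)f(n)$ events of block length $n$ (at most $n+n_0$ choices of $\ell$ make $[\ell,\ell+n)$ overlap $[\ell_0,\ell_0+n_0)$, and each supplies at most $f(n)$ admissible partners $k$), so the telescoping sum-product bound goes through with $2^{-\lfloor n/2\rfloor}$ in place of $2^{-n}$ and $(2-\epsilon)^{n/2}$ in place of $(2-\epsilon)^n$; the ratio $P^*/z = 2^{-\lfloor n/2\rfloor}(2-\epsilon)^{n/2} n^3$ still decays exponentially in $n$, leaving ample slack. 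Theorem~\ref{ALLLL} applied with $\mc{b} = \mc{a}$ then produces a valuation of $\mc{x}$ making every $A_{k,\ell,n}$ false, which is precisely a Player 1 sequence defeating $g$. The main obstacle is the $P^*$ bound in the overlapping case, where $x[k,k+n)$ itself depends on the resample variables; verifying the LLL inequality is routine given the existing Beck calculation.
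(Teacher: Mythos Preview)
Your proposal is correct and follows essentially the same approach the paper takes in its proof of Theorem~\ref{BeckComputableGame}: fix $g$, treat Player~1's odd bits as the variable set, take $\rsp(A_{k,\ell,n})$ to be the odd bits in $x[\ell,\ell+n)$, bound $P^*(A_{k,\ell,n})$ by $2^{-\lfloor n/2\rfloor}$ via the observation that the resample variables are forced one by one, and then recycle the Beck calculation with $f(n)=(2-\epsilon)^{n/2}$. The paper itself does not give a separate proof of this proposition (it is attributed to Pegden via the LLLL), but your argument is exactly the finite-$M$ specialization of the paper's own setup for Theorem~\ref{BeckComputableGame}, invoking Theorem~\ref{ALLLL} with $\mc{B}=\mc{A}$ in place of Theorem~\ref{CLLLL}.

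One small point of contrast worth noting: in Theorem~\ref{BeckComputableGame} the paper simply chooses $N$ large enough that $f(n)>n$, so that $x[k,k+n)$ and $x[\ell,\ell+n)$ are disjoint and the entire first block lies in the static part; this makes the $P^*$ bound immediate without the inductive walk you describe. Your treatment of the overlapping regime is correct (and the paper remarks that the non-overlap assumption is not actually needed), but you could shorten your proof by adopting the same convention.
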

	The full Theorem \ref{thm:Beck2} results from the following compactness argument. The proof of Theorem \ref{thm:Alon2} follows the same pattern. 
	\begin{proof}[Proof of Theorem~\ref{thm:Beck2}]
		By open determinacy, it is sufficient to show that there is no winning strategy for Player 2. Fix Player 2 strategy $g$. Fix game length $M>0$. By Proposition~\ref{Yeach}, there is a string of Player 1 moves such that $g$ does not win within $M$ moves. Let $T$ be the tree of such strings of Player 1 moves. $T$ is a finitely branching tree, so by K\"{o}nig's lemma, there is an infinite path through $T$, so $g$ is not a winning strategy for Player 2. Since Player 2 has no winning strategy and they are playing an open game, Player 1 has a winning strategy by open determinacy. 
	\end{proof}
	
	This proof is non-constructive on account of the use of both K\"{o}nig's lemma and open determinacy. By using Theorem \ref{CLLLL}, we can effectivize this use of K\"{o}nig's lemma: we will show that $T$ has a $g$-computable path. However, finding whether the determinacy argument can be effectivised may require different methods and is outside the scope of the present investigation. 
	
	To set up the local lemma in the proofs of the two following theorems, fix Player 2 strategy $g$. Let the probability space be $2^{2\N+1} = \{e = e_1,e_{3},e_{5},...\}$ with $\{x = x_1,x_{3},x_{5},...\}$ as a random variable $x: 2^{2\N+1} \to \{0,1\}$ such that $x_{2i+1}(e) = e_{2i+1}$. Then, we can interpret Player 2's moves $d_2,d_4,d_6,...$ as functions $2^{2\N+1}\to \{0,1\}$ with $d_{2i}(e) = g(e_1,e_{3},...,e_{2i-1})$. Likewise, our sequence $ a(x) = a_1(x),a_2(x),a_3(x),...$ is a sequence of random variables with 
	\[
	a_k(e) = \begin{cases}
		x_k(e) &\text{ if } k = 2n+1 \text{ for some }n\in\N \\
		d_k(e) &\text{ otherwise}\\
	\end{cases}.
	\] 
	
	\begin{theorem}
		\label{BeckComputableGame}
		For every $\epsilon > 0$ there is an $N_\epsilon$ such that for each Player 2 strategy $g$ in the binary sequence game, there is a $g$-computable sequence $e_1e_3e_5...$ of Player 1 moves that when played against $g$, any two identical blocks of length $n > N_\epsilon$  in the resulting sequence have distance at least $f(n) = (2-\epsilon)^{n/2}$.
	\end{theorem}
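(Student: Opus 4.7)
The plan is to apply Theorem \ref{CLLLL} with Player 1's moves as the random variables, closely mirroring the proof of Theorem \ref{computableBeck} but accounting for Player 2's deterministic responses. Fix a Player 2 strategy $g$, and take the variable set $\mc{X} = \{x_1, x_3, x_5, \dots\}$ consisting only of Player 1's bits, each uniformly distributed on $\{0,1\}$. Every $a_k$ is then a $g$-computable function of the $x_{2i+1}$ with $2i+1 \leq k$. Define bad events $A_{k,\ell,n}$ (that $a[k, k+n) = a[\ell, \ell+n)$) for $n > N_\epsilon$ and $\ell - k = \lceil f(n) \rceil$ with $f(n) = (2-\epsilon)^{n/2}$, exactly as in Theorem \ref{computableBeck}. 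Let $\vbl(A_{k,\ell,n})$ be all $x_{2i+1}$ with $2i+1 < \ell+n$, and let $\rsp(A_{k,\ell,n})$ be those with index in $[\ell, \ell+n)$. After reindexing $\mc{X}$ as $y_1, y_2, \dots$, both $\vbl$ and $\rsp$ are intervals sharing the same right endpoint, so Lemma \ref{IntervalsWork} supplies the required linear order.

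I would next verify the $P^*$ condition with $P^*(A_{k,\ell,n}) = 2^{-\lceil n/2 \rceil}$. Given any valuation $\mu$ of $\stc(A_{k,\ell,n})$, the prefix $a[1, \ell)$ is fully determined, hence so is the target block $a[k, k+n)$ (for $N_\epsilon$ large enough that $\lceil f(n) \rceil > n$, so the compared blocks do not overlap). Moreover, varying the valuation of $\rsp(A_{k,\ell,n})$ induces a bijection onto the possible blocks $a[\ell, \ell+n)$: the odd-indexed entries of that block are exactly the resampled moves, and the even-indexed entries are $g$-computable from the moves so far. Hence at most one valuation of $\rsp(A_{k,\ell,n})$ makes the event true, giving $\Pr(A_{k,\ell,n} \mid E_\mu) \leq 2^{-|\rsp(A_{k,\ell,n})|} = 2^{-\lceil n/2 \rceil}$.

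Taking the same weights $z(A_{k,\ell,n}) = 1/(f(n) n^3)$ as in Theorem \ref{computableBeck}, the remaining LLLL computation is nearly identical: the number of length-$n$ neighbors of a fixed $A_{k_0,\ell_0,n_0}$ is again at most $(n+n_0) f(n)$, and the reduction of $P^*$ from $2^{-n}$ to $2^{-\lceil n/2 \rceil}$ is exactly offset by replacing $(2-\epsilon)^n$ with $(2-\epsilon)^{n/2}$ inside $f(n)$. The telescoping sum-to-product estimate then yields the condition with $\alpha = 1/2$ once $N_\epsilon$ is large enough. All relevant objects --- the probability distributions, $\vbl$, $\rsp$, the satisfying assignments of each event, and the order $\prec$ --- are uniformly $g$-computable, since evaluating any $a_i$ needs only finitely many queries to $g$. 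Theorem \ref{CLLLL} then delivers a $g$-computable valuation of $\mc{X}$, i.e.\ a $g$-computable sequence of Player 1 moves defeating $g$.

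The main point of care is the bijection behind the $P^*$ bound: once the prefix is fixed, Player 2's responses in $[\ell, \ell+n)$ are a deterministic function of Player 1's moves there, so the event depends essentially on only $\lceil n/2 \rceil$ resampled bits despite $\vbl(A_{k,\ell,n})$ containing every Player 1 move preceding $\ell+n$. Everything else is a near-mechanical transcription of the Theorem \ref{computableBeck} argument, and the exponents line up precisely because the distance bound has been weakened from $(2-\epsilon)^n$ to $(2-\epsilon)^{n/2}$.
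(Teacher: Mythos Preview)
Your proposal is correct and follows essentially the same approach as the paper: the same variable context, the same choice of $\rsp(A_{k,\ell,n})=x[\ell,\ell+n)\cap\mc{X}$, the same use of Lemma~\ref{IntervalsWork}, the same $P^*$ argument (the paper phrases it as the containment $(A_{k,\ell,n}\cap E_\mu)\subset E_{\mu,A_{k,\ell,n}}$ rather than as an injection), and the same deferral of the LLL inequality to the computation in Theorem~\ref{computableBeck} with the updated $f$. One small slip: the number of odd indices in $[\ell,\ell+n)$ can be $\lfloor n/2\rfloor$ rather than $\lceil n/2\rceil$ depending on the parity of $\ell$, so you should take $P^*(A_{k,\ell,n})=2^{-\lfloor n/2\rfloor}$ (the paper uses $2^{-(n-1)/2}$); this does not affect the rest of the argument.
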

	\begin{proof}
		Fix $\epsilon > 0$ and $N > 0$. We will show that if $N$ is large enough, then $N_\epsilon = N$ witnesses the theorem. Define $A_{k,l,n}$ as before. Since all of Player 1s previous moves can effect the truth of $A_{k,l,n}$, $\vbl(A_{k,l,n}) = x[0,l+n) \cap \mathcal{X} = \{x_{2i+1}: 2i+1 < l + n\}$. Set $\rsp(A_{k,l,n}) = x[l,l+n) \cap \mathcal{X}$. Let $\mathcal{A} = \{A_{k,l,n}: l - k = \lceil f(n) \rceil \text{ and } n > N\}$. For the same reason as in the proof for the non-game version, avoiding each $A_{k,l,n} \in \mathcal{A}$ is sufficient. Although we do not need to, we will assume that $N$ is large enough so that  $f(n) > n$ for all $n > N$. Then, $l - k > n$ for each $A_{k,l,n} \in \mathcal{A}$, so $x[k,k+n) \cap x[l,l+n) = \emptyset$ for each $A_{k,l,n} \in \mathcal{A}$. For each $x_{2i+1}$, there are fewer than $i^2$ many elements in $\{A_{k,l,n} \in \mathcal{A}:x_{2i+1} \in \rsp(A_{k,l,n})\} \subset \{A_{k,l,\lceil f(l-k) \rceil}: k < l \leq 2i+1\}$, so the set of events that have $x_{2i+1}$ in their resample sets is finite. The other computability conditions are immediate and Lemma~\ref{IntervalsWork} supplies the linear order. 

		To check the probabilistic conditions, we start with the $P^*$ condition. Fix $A_{k,l,n} \in \mathcal{A}$. Our guiding principle is to pick $P^*(A_{k,l,n})$ as small as possible such that, for each valuation $\mu$ of $\stc(A_{k,l,n})$, $\Pr(A_{k,l,n} \mid E_\mu) \leq P^*(A_{k,l,n})$. Player 1 has control of half of the bits, and it turns out that $P^*(A_{k,l,n}) = 2^{-(n-1)/(2)}$ is satisfactory. To see this, fix $\mu$ that is a valuation of $\stc(A_{k,l,n})$. Then, since $\stc(A_{k,l,n})$ is an initial segment of Player 1 moves, $\mu$ fixes the initial segment $a_1,a_2,...,a_{\min(\rsp(A_{k,l,n}))-1}$. Thus, for each $S,S' \in E_\mu$ and $i \leq \max(\stc(A_{k,l,n}))$, we have that $a_i(S) = a_i(S')$. Let $\mu(a_i)$ be this constant $a_i(S)$. Let $E_{\mu,A_{k,l,n}}$ be the event that for each $x_{2i+1} \in \rsp(A_{k,l,n})$,  $x_{2i+1} = \mu(a_{2i+1 -(l - k)})$. Then $(A_{k,l,n} \cap E_\mu) \subset E_{\mu,A_{k,l,n}}$. Trivially, $(A_{k,l,n} \cap E_\mu) \subset E_\mu$, so
		\[
		(A_{k,l,n} \cap E_\mu) \subset (E_{\mu,A_{k,l,n}} \cap E_\mu).
		\]
		Since $E_\mu$ and $E_{\mu,A_{k,l,n}}$ are statements about the of values of disjoint sets of independent random variables, they are independent. Therefore,
		\[
		\begin{split}
			\Pr(A_{k,l,n}|E_\mu) &= \frac{\Pr(A_{k,l,n} \cap E_\mu)}{Pr(E_\mu)}\\
			&\leq \frac{\Pr(E_{\mu,A_{k,l,n}} \cap E_\mu)}{\Pr(E_\mu)}\\
			&= \Pr(E_{\mu,A_{k,l,n}}).
		\end{split}
		\]
		$|\rsp(A_{k,l,n})| \geq (n-1)/(2)$, so $\Pr(A_{k,l,n}|E_\mu) \leq \Pr(E_{\mu,A_{k,l,n}}) \leq 2^{-(n-1)/(2)}$, as desired. 
		
		Next we are left to show that there is $z: \mathcal{A} \to (0,1)$ such that 
		\begin{equation}
			\label{eq:gameBeckLLCon}
			P^*(A_{k,l,n}) \leq z(A_{k,l,n})\prod_{B \in \nbr(A_{k,l,n})} (1-z(B)).
		\end{equation}
		The proof of Condition \ref{eq:gameBeckLLCon} is almost identical to the corresponding part of the proof of Theorem~\ref{computableBeck}, with the definition of $f(n)$ updated.  
	\end{proof}

	\begin{theorem}
		\label{AlonComputableGame}
		For every $\epsilon > 0$ there is an $N_\epsilon$ such that for each Player 2 strategy $g$ in the binary sequence game, there is a $g$-computable sequence $e_1e_3e_5...$ of Player 1 moves that when played against $g$, any two adjacent  blocks of length $n > N_\epsilon$  in the resulting sequence differ in at least $n(\frac{1}{4}-\epsilon)$ many entries. 
	\end{theorem}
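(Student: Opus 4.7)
The plan is to combine the game-theoretic setup of Theorem \ref{BeckComputableGame} with the probability estimates from Theorem \ref{computableAlon}, then apply Theorem \ref{CLLLL} to produce a $g$-computable winning Player 1 sequence. Fix $\epsilon > 0$ and let $N > 0$ be large (to be specified). Using the probability space $2^{2\N+1}$, random variables $x_{2i+1}$, Player 2 functions $d_{2i}$, and sequence $a_k$ as set up preceding Theorem \ref{BeckComputableGame}, define $A_{k,n}$ to be the bad event that $a_{k+s} = a_{k+n+s}$ for strictly more than $n(\tfrac{3}{4}+\epsilon)$ many $s$ with $0 \leq s < n$, and let $\mc{A} = \{A_{k,n} : n > N\}$, $\vbl(A_{k,n}) = \{x_{2i+1} : 2i+1 < k+2n\}$, and $\rsp(A_{k,n}) = \{x_{2i+1} : k+n \leq 2i+1 < k+2n\}$. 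The computability conditions follow exactly as in the proof of Theorem \ref{BeckComputableGame}, and Lemma \ref{IntervalsWork} supplies the computable linear order.

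The key new computation is a bound on $\Pr(A_{k,n} \mid E_\mu)$ uniformly in any valuation $\mu$ of $\stc(A_{k,n})$. Because $\stc(A_{k,n})$ is an initial segment of Player 1 moves, $\mu$ pins $a_1,\ldots,a_{k+n-1}$, and in particular fixes the first block $a[k,k+n)$ to a constant string. In the second block $a[k+n,k+2n)$, the Player 1 positions are exactly the variables in $\rsp(A_{k,n})$ and behave as independent fresh coins, while the Player 2 positions, at most $\lceil n/2 \rceil$ of them, are reactive functions of $\mu$ and of those coins via $g$. Since at most $\lceil n/2 \rceil$ of the agreements demanded by $A_{k,n}$ can come from Player 2 positions, the occurrence of $A_{k,n}$ forces at least
\[
n(\tfrac{3}{4}+\epsilon) - \lceil n/2 \rceil \geq n(\tfrac{1}{4}+\epsilon) - 1
\]
of the Player 1 bits in $\rsp(A_{k,n})$ to match their (fixed) first-block targets. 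Each such bit matches independently with probability $1/2$, so we may take
\[
P^*(A_{k,n}) = \Pr\!\left(\mathrm{Bin}(\lceil n/2 \rceil, 1/2) \geq n(\tfrac{1}{4}+\epsilon) - 1\right),
\]
and by the same binomial tail estimate used in the proof of Theorem \ref{computableAlon}, this is bounded by $n\alpha_0^{n/2}$ for some $\alpha_0 < 1$ depending only on $\epsilon$.

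With this $P^*$, verifying the Lov\'asz local lemma condition proceeds almost verbatim as in the proof of Theorem \ref{computableAlon}, only with exponents halved: take $z(A_{k,n}) = b^{n/2}/n$ for some $\alpha_0^{1/2} < b < 1$, note that $\nbr(A_{k_0,n_0})$ contains at most $n_0 + n$ events of any given size $n$, and bound the product via $(1 - a/x)^x \geq 1 - a$ and $\prod_n(1 - a_n) \geq 1 - \sum_n a_n$ until a convergent series in $n$ appears. For $N$ large enough, Condition \ref{LeftLocalLemmaCondition} holds with $\alpha = \tfrac{1}{2}$, and Theorem \ref{CLLLL} yields a $g$-computable sequence $e_1, e_3, e_5, \ldots$ of Player 1 moves falsifying every $A_{k,n} \in \mc{A}$. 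The main obstacle is the $P^*$ step: one must carefully argue that, despite Player 2's second-block moves depending reactively on the random Player 1 bits in $\rsp(A_{k,n})$, the event of at least $n(\tfrac{1}{4}+\epsilon) - 1$ Player 1 agreements really does dominate $A_{k,n}$ on $E_\mu$ uniformly in $\mu$ and $g$. This is where the factor-of-two loss (the $\tfrac{1}{4}$ of the theorem versus the $\tfrac{1}{2}$ of the non-game version) is forced.
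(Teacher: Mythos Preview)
Your proposal is correct and follows essentially the same route as the paper: the same variable setup, the same $\rsp$ sets, the same counting argument that at most $\lceil n/2\rceil$ agreements can come from Player 2 so Player 1 must supply the remaining $\approx(\tfrac14+\epsilon)n$, and the same deferral of the LLL condition to the non-game computation with exponents halved. The ``main obstacle'' you flag at the end is not actually an obstacle: once $\mu$ fixes the first block, each Player 1 target in the second block is a constant, so the event ``at least $r$ Player 1 bits match'' lies in $\sigma(\rsp(A_{k,n}))$ and is independent of $E_\mu$ regardless of how reactively $g$ behaves---your own argument already establishes the required containment $A_{k,n}\cap E_\mu\subseteq\{\text{Player 1 has }\geq r\text{ matches}\}\cap E_\mu$, and that is all that is needed.
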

	\begin{proof}
		Fix $\epsilon > 0$ and $N > 0$. We will show that if $N$ is large enough, then $N_\epsilon = N$ witnesses the theorem. For each $k,n \in \mathbb{N}$ with $k < l$, let $A_{k,n}$ be the event that  blocks $a_k,a_{k+1},...,a_{k+n-1}$ and $a_{k+n},a_{k+n+1},...,a_{k+2n-1}$ share at least $(\frac{3}{4} + \epsilon)n$ many entries. Then, $\vbl(A_{k,n}) = (x[0, k+2n) \cap \mathcal{X}) = \{x_{2i+1}: k \leq 2i+1 \leq k+2n-1 \}$. Set $\rsp(A_{k,n}) = x[k+n,k+2n) \cap \mathcal{X}$. Let $\mathcal{A} = \{A_{k,n}: n > N\}$. For each $x_{2i+1}$, there are fewer than $(2i+1)^2$ many elements in $\{A_{k,n} \in \mathcal{A}:x_{2i+1} \in \rsp(A_{k,n})\} \subset \{A_{k,n}: k + n \leq 2i+1\}$, so the set of events that have $x_{2i+1}$ in their resample sets is finite.  The other computability conditions are immediate and Lemma~\ref{IntervalsWork} supplies the linear order.
		
		To check the probabilistic conditions, we start with the $P^*$ condition. Fix $A_{k,n} \in \mathcal{A}$. Our guiding principle is to pick $P^*(A_{k,l,n})$ as small as possible such that, for each valuation $\mu$ of $\stc(A_{k,n})$, $\Pr(A_{k,n} \mid E_\mu) \leq P^*(A_{k,n})$. Player 1 has control of half of the bits, and we cannot rely on Player 2 not matching any bits, so we choose
		\[
		P^*(A_{k,n}) = \frac{1}{2^{\lfloor n/2 \rfloor}} \sum_{r=\lceil (\frac{1}{4} + \epsilon)n \rceil}^{\lfloor n/2 \rfloor} {\lfloor n/2 \rfloor \choose r}. 
		\]
		To see this suffices, fix $\mu$ that is a valuation $\stc(A_{k,n})$. Then, since $\stc(A_{k,n})$ is an initial segment of Player 1 moves, $\mu$ fixes the initial segment $a_1,a_2,...,a_{k+n-1}$. Thus, for each $S,S' \in E_\mu$ and $i \leq (k+n-1)$, we have that $a_i(S) = a_i(S')$. Let $\mu(a_i)$ be this constant $a_i(S)$.
		Fix $S \in E_\mu$. Then, $a_i(s) = \mu(a_i)$ for each $i \leq k + n-1$. If $S \in A_{k,n}$, then it is necessary that Player 1s moves in $[k+n,k+2n)$ match at least $\lfloor (\frac{1}{4} + \epsilon)n \rfloor$ many bits on their turn, even if Player 2 always matches the corresponding bit each of their at most $\lceil n/2 \rceil$ moves. $P^*(A_{k,n})$ is the probability that Player 1 matches at least $\lfloor (\frac{1}{4} + \epsilon)n \rfloor$ bits, which is required for $A_{k,n}$ to be true. 
		
		Next we are left to show that there is $z: \mathcal{A} \to (0,1)$ such that 
		\begin{equation}
			\label{eq:gameAlonLLCon}
			P^*(A_{k,l,n}) \leq z(A_{k,l,n})\prod_{B \in \nbr(A_{k,l,n})} (1-z(B)).
		\end{equation}
		The proof of Condition \ref{eq:gameAlonLLCon} is almost identical to the corresponding part of the proof of theorem \ref{computableBeck}.
	\end{proof}
	
	Theorems~\ref{BeckComputableGame} and \ref{AlonComputableGame} show that for each Player 2 strategy $g$, there is a $g$-computable Player 1 sequence which defeats it. It is natural to ask whether there is a Player 1 sequence which defeats \textit{all} Player 2 strategies. However, this is not the case. 
	
	\begin{Proposition}
		\label{prop:noUniform}
		Let $\{e_{2i+1}^k\}_{i,k\in\omega}$ be a $\{0,1\}$-valued matrix. Then, for any odd $N$, there is a sequence  $\{d_{2i}\}_{i \in \omega} \leq_T \{e_{2i+1}^k\}_{i,k\in\omega}$ such that, for any $k$, the sequence
		\[
		a_1a_2a_3...=e_1^kd_2e_3^kd_4e_5^k...
		\]
		has at least one pair of identical adjacent intervals of length $N$. 
	\end{Proposition}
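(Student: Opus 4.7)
The plan is a direct diagonal construction. I would partition $\{1,2,3,\dots\}$ into the disjoint length-$2N$ intervals $I_k = [m_k, m_k + 2N)$ with $m_k = 2Nk + 1$, one interval per column of the matrix. Inside $I_k$, I would design Player 2's bits so that, when $e^k$ is played as Player 1, the two halves $a[m_k, m_k + N)$ and $a[m_k + N, m_k + 2N)$ come out identical.

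The key observation is that since $N$ is odd and $m_k$ is odd, the positions $m_k + i$ and $m_k + N + i$ have opposite parities for every $0 \leq i < N$. Hence exactly one of the two bits is determined by $e^k$ (a Player 1 bit) and the other is a Player 2 bit under my control, so I can set the Player 2 bit equal to the corresponding Player 1 bit. Concretely: for even $i$ I would set $d_{m_k + N + i} := e^k_{m_k + i}$, and for odd $i$ I would set $d_{m_k + i} := e^k_{m_k + N + i}$. A short parity accounting check shows these assignments fill each of the $N$ even-indexed positions of $I_k$ exactly once, with no conflicts.

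Because the intervals $I_k$ are pairwise disjoint and cover every positive integer, the assignments for different $k$ do not interfere and $d$ is fully specified. When $e^k$ is played against this $d$, the resulting sequence has $a_{m_k + i} = a_{m_k + N + i}$ for all $0 \leq i < N$, yielding the required identical adjacent length-$N$ blocks inside $I_k$. Since each entry of $d$ is defined by a single lookup in the matrix, $d$ is Turing computable from $\{e^k_{2i+1}\}$. I do not anticipate any substantive obstacle; the only delicate point is the parity bookkeeping above, which works precisely because $N$ is odd.
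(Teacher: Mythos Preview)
Your proposal is correct and essentially identical to the paper's proof: both partition the positive integers into consecutive length-$2N$ blocks, one per column $k$, and use the parity trick (that $N$ odd forces positions $j$ and $j+N$ to have opposite parities) to let Player~2 copy the unique Player~1 bit in each matched pair. The paper states it more tersely as ``defeat each $e^k$ on $[1,2N]$ and concatenate,'' but the content is the same as yours.
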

	
	\begin{proof}
		We will show that for each $k$, we can defeat $e_{2i}^k$ with finitely many moves. The full statement follows by concatenating these moves together.\\
		We will compare the interval $x[1,N]$ with $x[N+1,2N]$. Thus, we need $a_{i} = a_{i+N}$ for $1\leq i \leq N$. For $1 \leq t \leq \frac{N}{2}$, let $d_{2t} = e_{2t +N}$ and let $d_{1 + N + 2t} = e_{2t + 1}$.  Then, for even $i = 2t$, we have that $a_i = d_{2t} = e_{2t + N} = a_{i+N}$ and for odd $i = 2t+1$ we have that $a_i = e_{2t+1} = d_{1 + N + 2t} = a_{2t + 1 + N}$.                                                                
	\end{proof}
	\subsection{Computing Proper Colorings of The Cayley Graphs of Lacunary Sequences}
	
	In this section, we effectivize an application of the Lefthanded local lemma due to \citeauthor{PeresSchlag2010} \cite{PeresSchlag2010}. 
	
	Let $\epsilon > 0$. A sequence $S = \{n_i\}_{i \in \bb{n}}$ is called a \textit{lacunary with parameter } $(1+\epsilon)$ if, for each $i \in \bb{n}$ we have that $n_{i+1}/ n_i > (1+\epsilon)$. We associate each lacunary sequence with its Cayley graph $\mc{G} = \mc{G}(S) = \mb{z}_S$, which by definition is the graph with vertex set $\bb{Z}$ such that $n$ and $m$ are connected by an edge if and only if $|n - m| \in S$. 
	
	For a graph $G$, the chromatic number $\chi(G)$ is the least number of colors one can assign to the vertices of $G$ such that all edges connect vertices of different colors, in what is known as a proper coloring. This is also a notion of non-repetitiveness sequences in the sense that being ``non-repetitiveness'' is determined by the edges of $G$. According to Katznelson \cite{Katznelson2001}, Paul Erd\H{o}s posed the question of whether $\chi(\mc{G})$ is finite for every lacunary sequence of any parameter greater than 1. Katznelson discovered the following connection to another question of Erd\H{o}s \cite{ErdosLovasz1975} asking whether, for each lacunary sequence $\{n_i\}_{i\in\mb{n}}$, there is a $\theta \in (0,1)$ such that $\{\theta n_i\}_{i\in\mb{n}}$ is not dense modulo 1. 
	
	\begin{Theorem}[\cite{Katznelson2001}]
		\label{colorCayley}
		Let $S = \{n_i\}_{i \in \mb{N}}$ be an integer valued sequence. Suppose there exists a $\delta > 0$ and $\theta \in (0,1)$ such that  $\inf_j\norm{\theta n_i} > \delta$, where $\norm{\cdot}$ denotes the distance to the nearest integer. Then, $\chi(\mc{G}(S)) \leq k := \lceil \delta^{-1} \rceil$. 
	\end{Theorem}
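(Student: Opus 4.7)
The plan is to give an explicit $k$-coloring of $\bb{Z}$ built from the fractional parts of $\theta n$. The guiding idea is that the hypothesis $\norm{\theta n_i} > \delta$ for every $n_i \in S$ says precisely that $\{\theta n\}$ and $\{\theta m\}$ are at circular distance greater than $\delta$ on $\bb{R}/\bb{Z}$ whenever $|n - m| \in S$. Thus a partition of the circle into arcs of length at most $\delta$ should induce a proper coloring when we color each integer by the arc containing its $\theta$-multiple.

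Concretely, I would set $k = \lceil \delta^{-1} \rceil$, so that $1/k \leq \delta$, and partition $\bb{R}/\bb{Z}$ into $k$ consecutive half-open arcs $I_1, \ldots, I_k$ each of length $1/k$. Define the coloring $c \colon \bb{Z} \to \{1, \ldots, k\}$ by letting $c(n)$ be the unique $j$ such that $\{\theta n\} \in I_j$.

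To verify $c$ is a proper coloring of $\mc{G}(S)$, suppose $n$ and $m$ are adjacent, so $|n - m| = n_i$ for some $i$. The circular distance between $\{\theta n\}$ and $\{\theta m\}$ equals $\norm{\theta(n - m)} = \norm{\theta n_i} > \delta \geq 1/k$. On the other hand, any two points lying in the same arc $I_j$ have circular distance strictly less than $1/k$. Hence $c(n) \neq c(m)$, giving the bound $\chi(\mc{G}(S)) \leq k$.

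There is no substantive obstacle: the proof is really a single observation once the setup is chosen. The only subtlety worth checking is the boundary case in which $\delta^{-1}$ is an integer and $1/k = \delta$; this is handled by the strictness of the hypothesis $\norm{\theta n_i} > \delta$ together with the strictness of the arc-diameter bound (same arc implies circular distance \emph{strictly} less than $1/k$).
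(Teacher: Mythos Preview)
Your proof is correct and is essentially identical to the paper's own argument: partition the circle into $k = \lceil \delta^{-1}\rceil$ arcs of length $1/k \leq \delta$, color $n$ by the arc containing $\theta n \pmod 1$, and use $\norm{\theta(n-m)} > \delta \geq 1/k$ to conclude that adjacent vertices receive different colors. Your added remark about the boundary case $\delta^{-1} \in \mb{Z}$ is a nice clarification but not a departure from the paper's approach.
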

	\begin{proof}
		Partition the circle $[0,1)$ into $k$ many disjoint intervals $I_1,I_2,\dots,I_k$ of length $1/k \leq \delta$. Define coloring $c: \mb{n} \to k$ by $c(x) = j$ if and only if $\theta x \in I_j (\mod 1)$. Then, if $n - m \in S$, we have that $\norm{\theta(n-m)} >  \delta \geq 1/k$, so $c(n) \neq c(m)$. 
	\end{proof}
	
	Our knowledge of lower bounds on $\inf_j\norm{\theta n_j}$ (and hence, upper bounds on $\chi(\mc{G})$) have been improved over time and are of independent interest (for a more detailed history and applications, see \cite{Dubickas2006}). The first proofs that there is a $\theta$ such that $\inf_j\norm{\theta n_j}$ has any lower bound at all are due to de Mathan \cite{deMathan1980} and Pollington \cite{Pollington1979}, with a lower bound linear in $\epsilon^4 |\log \epsilon|^{-1}$. Katznelson \cite{Katznelson2001} improved this to being linear in $\epsilon^2 |\log \epsilon|^{-1}$, which was further improved to being linear in $\epsilon^2$ by \citeauthor{AkhunzhanovMoshchevitin2004} \cite{AkhunzhanovMoshchevitin2004}. Most recently, \citeauthor{PeresSchlag2010} improved this to being linear in $\epsilon |\log \epsilon|^{-1}$. 
		
	\begin{Theorem}[\cite{PeresSchlag2010}]
		\label{PeresSchlagThm}
		There exists a constant $c > 0$ such that the following holds. Let $S = \{n_i\}_{i \in \mb{N}}$ be a lacunary sequence with parameter $(1 + \epsilon)$. Then, there is a $\theta \in (0,1)$
		\[
			\inf_{j \geq 1} \norm{\theta n_j} > c \epsilon |\log \epsilon|^{-1}.
		\]
		
		Hence, the Cayley graph $\mc{G} := \mc{G}(S)$ satisfies $\chi(\mc{G}) < \lceil c^{-1} \epsilon^{-1} |\log \epsilon|\rceil $. 
	\end{Theorem}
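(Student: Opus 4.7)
The plan is to derive Theorem~\ref{PeresSchlagThm} as an application of Theorem~\ref{CLLLL}, adapting the probabilistic argument of Peres--Schlag to our variable context via the interval structure of Lemma~\ref{IntervalsWork}. In particular, this will yield a computable $\theta$, from which Theorem~\ref{colorCayley} produces a computable proper $O(\epsilon^{-1}|\log\epsilon|)$-coloring of $\mc{G}(S)$.

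I would take $\mc{X} = \{x_1, x_2, \ldots\}$ to be independent uniform bits and identify $\theta := \sum_{i \geq 1} x_i 2^{-i}$. Fix $\delta := c\epsilon|\log\epsilon|^{-1}$ for a constant $c > 0$ to be chosen, and set $r_j := \lceil \log_2(n_j/\delta)\rceil + C_0$ so that the tail $\sum_{i > r_j} x_i 2^{-i}$ shifts $\theta n_j \bmod 1$ by less than $\delta/4$. Define the bad event $A_j$, measurable in $\sigma(x[1, r_j])$, to be ``the truncation $\tilde{\theta}_j := \sum_{i \leq r_j} x_i 2^{-i}$ satisfies $\|\tilde{\theta}_j n_j\| < \delta/2$''; avoiding every $A_j$ then forces $\inf_j \|\theta n_j\| \geq \delta/4$. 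Put $\vbl(A_j) := x[1, r_j]$ and $\rsp(A_j) := x[s_j, r_j]$, where $s_j$ is chosen so that $|\rsp(A_j)| = \lceil \log_2(1/\delta)\rceil + C_1$---the minimum length for which the arithmetic progression traced by $\tilde{\theta}_j n_j$ as the resample bits vary covers a full period of $[0,1) \bmod 1$. By Lemma~\ref{IntervalsWork}, any linearization of the resulting $\prec^*$ satisfies Condition~\ref{LefthandedRule}.

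For the probabilistic input: for any valuation $\mu$ of $\stc(A_j) = x[1, s_j - 1]$, the $2^{|\rsp(A_j)|}$ configurations of $\rsp(A_j)$ trace an arithmetic progression in $\tilde{\theta}_j n_j$ of spacing $n_j 2^{-r_j} \leq \delta / 2^{C_0}$ covering more than one full period, so the fraction landing within $\delta/2$ of an integer is at most $P^*(A_j) := K_1 \delta$ for an absolute constant $K_1$. Two events $A_j, A_k$ are neighbors iff the intervals $[s_j, r_j], [s_k, r_k]$ overlap; since consecutive $r_j$ differ by at least $\log_2(1+\epsilon) \geq \epsilon/2$ bits while the intervals have common length $O(|\log\epsilon|)$, we get $|\nbr(A_j)| \leq K_2 |\log\epsilon|/\epsilon$. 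Setting $z(A_j) := K_3 \delta$ uniformly and using $(1 - K_3\delta)^{K_2|\log\epsilon|/\epsilon} \geq 1/(2\alpha)$ whenever $K_2 K_3 c$ is small, the inequality
\[
P^*(A_j) \leq \alpha z(A_j) \prod_{B \in \nbr(A_j)}(1 - z(B))
\]
reduces to $K_1 \leq \alpha K_3 / 2$, which I can satisfy with $\alpha = 3/4$ by taking $K_3$ large and then $c$ small.

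The main obstacle is the dual calibration of $|\rsp(A_j)|$ and $|\nbr(A_j)|$: the resample set must be long enough (at least $\log_2(1/\delta)$ bits) for the $O(\delta)$ bound to hold uniformly in the static valuation, yet its length feeds directly into the neighbor count. Balancing these at $|\nbr(A_j)| = O(|\log\epsilon|/\epsilon)$ is precisely what produces the logarithmic savings distinctive of Peres--Schlag over the earlier de~Mathan--Pollington-style bounds. The undirected neighborhood forced by Theorem~\ref{CLLLL} costs at most a factor of $2$ compared with Peres--Schlag's directed dependency graph (as noted after Theorem~\ref{CLLLL}), so the order $\epsilon|\log\epsilon|^{-1}$ is preserved. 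All remaining computability hypotheses of Theorem~\ref{CLLLL} are immediate from computability of $\{n_j\}$, since $r_j, s_j, \vbl(A_j), \rsp(A_j)$, the truth-certificates for $A_j$, and each finite set $\{j : x_i \in \rsp(A_j)\}$ are uniformly computable. Theorem~\ref{CLLLL} then produces a computable $\theta$ with $\inf_j \|\theta n_j\| > \delta/4$, and Theorem~\ref{colorCayley} converts it into a computable proper $\lceil 4/\delta\rceil$-coloring of $\mc{G}$.
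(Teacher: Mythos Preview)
Your proposal is correct and follows essentially the same route as the paper's proof of Theorem~\ref{effectivePeresSchlag}: binary-expansion variables for $\theta$, the bad event $A_j$ cut off at scale $\ell_j\approx\log_2(n_j/\delta)$, a resample window of length $p=O(|\log\epsilon|)$, the bound $P^*(A_j)=O(\delta)$ obtained by counting how the $E_j$-intervals meet a dyadic block of size $2^{p-\ell_j}$, and the neighbor estimate $|\nbr(A_j)|=O(|\log\epsilon|/\epsilon)$ from the lacunarity gap, all fed into Theorem~\ref{CLLLL} with constant $z$ of order $\delta$. The only cosmetic differences are that the paper routes the parameters through the doubling time $M\approx\epsilon^{-1}$ (so $\delta=c/(M\log_2 M)$ and $p=\lceil C_1\log_2 M\rceil$) and phrases the $P^*$ bound as an interval count rather than your arithmetic-progression formulation, but the calculations coincide.
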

	
	 Note that it is impossible to have a bound $\chi(\mc{G}) < c\epsilon^{-1}$, since for each $\epsilon < 1$, the there is a lacunary sequence with parameter $(1 + \epsilon)$ beginning with $1, 2, 3, \dots, \lfloor \epsilon^{-1} \rfloor$. Thus, Theorem \ref{PeresSchlagThm} is optimal up to the factor of $|\log \epsilon|^{-1}$. The proof of Theorem \ref{PeresSchlagThm} uses a simplified but less general version of the lefthanded LLL, proven independently from Pegden's result. 
	
	 We are interested in, for a given lacunary sequence $S = \{n_i\}_{i \in \mb{n}}$ with parameter $(1 + \epsilon)$, computing a proper coloring of $\mc{g}(S)$ with as few colors as possible. Note that the coloring in Theorem \ref{colorCayley} is uniformly computable with $\theta$ and $\delta$ as parameters. Thus, in order to compute a proper coloring of $\mc{G}(S)$, we must compute $\theta$ and a suitable $\delta$. 
	 
	 \begin{Theorem}[Effective version of Theorem \ref{PeresSchlagThm}]
	 	\label{effectivePeresSchlag}
	 	There is a constant $c >0$ and computable Turing functionals $\Theta$ and $\Phi$ such that the following holds. Let $S = \{n_i\}_{i \in \mb{N}}$ be a lacunary sequence with parameter $(1 + \frac{1}{n})$. Then, $\Theta(S,n) = \theta \in (0,1)$ such that
	 	\[
	 		\inf_{j \geq 1} \norm{\theta n_j} > c \epsilon |\log \epsilon|^{-1}.
	 	\]
	 	
	 	Furthermore, $\Phi(\Theta(S,n),\epsilon)$ is a coloring $c : \mb{Z} \to \lceil c^{-1} \epsilon^{-1} |\log \epsilon| \rceil$ is a proper coloring of $\mc{G}(S)$.  
	 \end{Theorem}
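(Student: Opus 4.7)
The plan is to apply Theorem \ref{CLLLL} to produce a computable $\theta \in (0,1)$ satisfying $\inf_j \|\theta n_j\| > \delta := c\epsilon |\log\epsilon|^{-1}$, and then convert $\theta$ into the desired coloring by effectivizing the proof of Theorem \ref{colorCayley}. Take $\mc{X} = \{b_1, b_2, \dots\}$ to be independent uniform bits and set $\theta := \sum_{i \geq 1} b_i 2^{-i}$. The genuine bad events $\widetilde{A}_j = \{\|\theta n_j\| \leq \delta\}$ depend on all the bits, so to obtain clopen events determined by finitely many variables I will truncate: fix $K_j = \lceil \log_2(n_j/\delta) \rceil + C_0$ for a constant $C_0$, let $\theta_{K_j} := \sum_{i=1}^{K_j} b_i 2^{-i}$, and define $A_j = \{\|\theta_{K_j} n_j\| \leq \delta + n_j 2^{-K_j}\}$. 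Then $\widetilde{A}_j \subseteq A_j$, $\vbl(A_j) = b[1,K_j]$, and one can arrange $\Pr(A_j) \leq 3\delta$ by taking $C_0$ large.

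To fit the interval framework of Lemma \ref{IntervalsWork}, I will set $\rsp(A_j) = b[J_j, K_j]$ with $J_j := \lceil \log_2 n_j \rceil$, so $\stc(A_j) = b[1, J_j - 1]$. The static bits contribute only a deterministic integer shift to $\theta n_j$, so conditioning on any valuation $\mu \in \mc{e}_{A_j}$ preserves the distribution of $\theta n_j \bmod 1$ and yields $P^*(A_j) = 3\delta$ for Condition \ref{PStarCondition}. Lemma \ref{IntervalsWork} then supplies a computable linear order $\prec$ (events ordered by $K_j$, equivalently by $j$) satisfying Condition \ref{LefthandedRule} and the structural requirement that $A \gg B$ implies $\rsp(A) \cap \vbl(B) = \emptyset$.

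The crucial estimate is the main local lemma condition. The lacunary hypothesis $n_{i+1}/n_i \geq 1 + \epsilon$ forces $J_i$ to grow at rate at least $\log_2(1 + \epsilon) \asymp \epsilon$, so the number of $i' \neq j$ with $[J_{i'}, K_{i'}] \cap [J_j, K_j] \neq \emptyset$ is at most $O(|\log\epsilon|/\epsilon)$. Setting $z(A_j) = C_1 \delta$ for a constant $C_1$, I will bound
\[
z(A_j) \prod_{B \in \nbr(A_j)} (1 - z(B)) \geq C_1 \delta \cdot \exp\bigl(-O(C_1 \delta \cdot |\log\epsilon|/\epsilon)\bigr) = C_1 \delta \cdot \exp(-O(C_1 c)),
\]
so taking $C_1$ large enough and $c$ small enough makes this exceed $2 P^*(A_j) = 6\delta$, which gives the required inequality with rational $\alpha = 1/2$. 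The undirected neighborhood graph of Theorem \ref{CLLLL} effectively doubles $|\nbr(A_j)|$ compared to Peres--Schlag's directed version, but this only affects the absolute constant $c$, not the asymptotic form $\delta \sim \epsilon |\log\epsilon|^{-1}$.

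Having verified the hypotheses of Theorem \ref{CLLLL} (the remaining computability conditions on $\rsp$, $\vbl$, the truth predicate of each $A_j$, the rational probabilities of the $b_i$, and the finite sets $\{j : b_i \in \rsp(A_j)\}$ are all immediate), the resample algorithm yields a computable $\theta = \Theta(S,n)$ with $\inf_j \|\theta n_j\| > \delta$. The coloring functional $\Phi$ effectivizes Katznelson's argument: given $\theta$ as a Cauchy name and $\epsilon$, set $k = \lceil c^{-1} \epsilon^{-1} |\log\epsilon| \rceil$ and color $x \in \mb{Z}$ by $\lfloor k \cdot (\theta x \bmod 1) \rfloor$, computing $\theta x \bmod 1$ to sufficient precision to determine this floor; to ensure the decision always halts, I will add a mild auxiliary constraint in the LLL forcing $\theta$ to be irrational (e.g., forbidding a long enough run of identical bits at each scale, each such event having small probability and only local dependencies). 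The main obstacle is the discretization in the first paragraph: $K_j$ must be large enough that $\Pr(A_j)$ stays close to $2\delta$, yet small enough that the lacunary spacing still controls $|\nbr(A_j)|$ --- once this balance is struck, every other step is either Peres--Schlag's original computation or routine verification.
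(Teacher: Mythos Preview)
Your approach mirrors the paper's: encode $\theta$ via its binary digits, replace $\{\|\theta n_j\|\leq\delta\}$ by a clopen dyadic thickening $A_j$ with $\vbl(A_j)$ an initial segment of the bits, take $\rsp(A_j)$ to be the last $O(|\log\epsilon|)$ bits of $\vbl(A_j)$, supply the order via Lemma~\ref{IntervalsWork}, and check Condition~\ref{LeftLocalLemmaCondition} with constant $z$ and $|\nbr(A_j)|=O(|\log\epsilon|/\epsilon)$. The paper parameterizes through the doubling time $M$ and takes $\rsp(A_j)$ to be the last $p=\lceil C_1\log_2 M\rceil$ bits of $[1,\ell_j]$ rather than all bits from $\lceil\log_2 n_j\rceil$ onward, but the resulting estimates agree up to constants.

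There is one genuine gap, in your justification of the $P^*$ bound. You claim the static bits $b[1,J_j-1]$ contribute only a deterministic \emph{integer} shift to $\theta n_j$, so that conditioning on them preserves the distribution of $\theta n_j\bmod 1$. This is false unless $n_j$ is a power of $2$: with $J_j=\lceil\log_2 n_j\rceil$ the static contribution $n_j\sum_{i<J_j}b_i 2^{-i}$ is a multiple of $n_j/2^{J_j-1}\in(1,2]$, which is generically non-integral, so the conditional distributions for different $\mu$ really are different translates. The conclusion $P^*(A_j)=O(\delta)$ is nonetheless correct, but it needs the counting argument the paper actually gives: conditioning on the static bits confines $\theta_{K_j}$ to a dyadic interval of length $\approx 2/n_j$, which meets at most three of the bad arcs $\{\theta:\|\theta n_j\|\leq\delta'\}$ (each of length $2\delta'/n_j$); counting the discrete values of $b[J_j,K_j]$ landing in those arcs yields $\Pr(A_j\mid E_\mu)=O(\delta)$ uniformly in $\mu$. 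Replace the integer-shift sentence with that argument.

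Your concern about computing the coloring from $\theta$ is legitimate and is glossed over in the paper, which simply asserts that Katznelson's map is computable from $\theta$ and $\delta$. Your proposed fix --- forcing $\theta$ irrational so that $\lfloor k(\theta x\bmod 1)\rfloor$ is eventually decidable for every integer $x$ --- is sound in principle, since $\theta$ irrational implies $\theta x$ irrational for all nonzero $x$. You should still verify that the auxiliary ``no long run of identical bits'' events fit into the same instance of Theorem~\ref{CLLLL}: each such event has probability $2^{-L+1}$, resample set of length $L$, and hence $O(L/\epsilon)$ neighbors among the $A_j$, so choosing $L$ of order $|\log\epsilon|$ keeps Condition~\ref{LeftLocalLemmaCondition} intact.
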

	 \begin{proof}
	 	We effectivize Peres and Schlag's proof using Theorem \ref{CLLLL}. We show that the theorem is true with $c = 1/360$. Let $M > 5$ be greater than the doubling time of $S$, that is, such that $n_{j +M} > 2n_j$ for all $j$. For example, we can chose $M > \max(\log_{1 + \epsilon} (2),5)$. Let 
	 	\[
	 	\delta =  \frac{c}{M \log_2 M}.
	 	\]
	 	For each $n_j$, let 
	 	\[
	 	E_j = \left\{ \theta \in [0,1): \norm{\theta n_j} < \delta \right\}.
	 	\]
	 	
	 	Let $\mc{X} =\{x_1,x_2,\dots\}$ be a countable collection of mutually independent identically distributed random variables such that $\Pr(x = 0) = \Pr(x = 1) = \frac{1}{2}$. As usual, let $x[m,n)$ denote the set $\{x_m,x_{m+1},\dots, x_{n_1}\}$. We construct a valuation $a_1,a_2,\dots$ of $\mc{X}$ such that $0.a_1a_2a_3,\dots$ is a binary representation of
	 	\[
	 		\theta \in \bigcap_{j \geq 1} E_j.
	 	\]
	 	
	 	Note that each $E_j$ is the union of $n_j$ many uniformly spaced intervals (in the circle topology) $I^j_1, I^j_2,\dots I^j_{n_j}$  of length $2\delta/n_j$. Let $\ell_j$ be the unique element of $\mb{n}$ such that 
	 	\[
	 	2^{-\ell_j} \geq \frac{2\delta}{n_j} > 2^{-\ell_j + 1}.
	 	\]
	 	Then, each $I^j_i$ is covered by at most two dyadic intervals of measure $2^{-\ell_j}$. For each $n_j$, let $A_j$ be the set of all dyadic intervals of measure $2^{-\ell_j}$ which have nonempty intersection with $E_j$. Then, $\vbl(A_j) = x[1,\ell_j]$ and satisfaction of $A_j$ is sufficiently computable for Theorem \ref{CLLLL}. Let $C_1 = 6$ and let 
	 	\[
	 	p = \lceil C_1 \log_2 M \rceil,
	 	\]
	 	be the number of variables we will resample from $\vbl(A_j)$. Let $\rsp(A_j) = x(\min\{0,\ell_j - p\},\ell_{j}]$. Then, each $x_i$ is in finitely many $\rsp$ sets which can be listed uniformly in $i$, so the computability conditions of Theorem \ref{CLLLL} are satisfied. We set $A_i \prec A_j$ whenever $i < j$. Since $a_{n + M}/a_n \geq 2$ for all $n$, we have that, for each $n_j$ there are at most $M$ many $n_i$ such that $\ell_i = \ell_j$. Hence, $|\nbr(A_j)| \leq 2Mp$. 
	 	
	 	We now compute a suitable $P^*(A_j)$. Consider a dyadic interval $L$ of measure $2^{-\ell_j + p}$. Then our valuation $\theta = 0.a_1a_2\dots$ being in $L$ corresponds to the initial $a_1,a_2,\dots, a_{\ell_j - p}$ of length $\ell-p$ matching $L$, so it is sufficient to find $P^*(A_j)$ such that 
	 	\[
	 	P^*(A_j)  \geq \Pr(A_j | L)
	 	\] 
	 	for each dyadic interval $L$ of measure $2^{-\ell_j + p}$.
	 	
	 	There are at most $\lfloor n_j / 2^{\ell_j - p}\rfloor + 1$ many $I^j_i$ that have nonempty intersection with $L$. Each of these has nonempty intersection with at most two dyadic intervals of measure $2^{-\ell_j}$. Thus,

	 	\begin{align}
	 	\Pr(A_j | L) &\leq \frac{2\left(1 + \frac{n_j}{2^{\ell_j - p}}\right)2^{-\ell_j}}{2^{-\ell_j + p}} \nonumber\\
	 		&= 2\left(2^{-p} + \frac{n_j}{2^{\ell_j}}\right) \nonumber\\
	 		&\leq 2\left(\frac{1}{2^{\lceil C_1 \log_2 M\rceil}} + 4 \delta\right) \nonumber \\
	 		& \leq 2M^{-C_1} + 8\delta. \label{lineWithM}
	 	\end{align}
	 	
	 	By our choice of $C_1 = 6$,  $c = 1/360$ and $M \geq 5$, we have that 
	 	
	 	\begin{equation}
	 		\frac{c_0 M^{C_1}}{M \log_2 M} \geq 1 \label{constantRequirement}
	 	\end{equation}

	 	and hence that $M^{-C_1} \leq \delta$. Applying this bound to Inequality $\ref{lineWithM}$ yields 
	 	\[
	 		\Pr(A_j | L) \leq 10\delta,
	 	\]
	 	so we may set $P^*(A_j) = 10\delta$. 
	 	
	 	We move to establishing the local lemma condition. Assume $i < j$. Then, we have that $A_i \in \nbr(A_j)$ if and only if there is $k$ such that $\ell_k = \ell_i$ and $j - p \leq k$. Since there are at most $M$ many $k$ such that $\ell_i = \ell_k$ we have that $|\nbr(A_j)| \leq 2pM$. 
	 	
	 	Let $h = 2pM$ and set $z(A_j) = h^{-1}$. Then,
	 	\[
	 		z(A_j) \prod_{B \in \nbr(A_j)} (1 - z(A_j)) \leq \frac{1}{h} \left(1 - \frac{1}{h}\right)^{h}.
	 	\]
	 	
	 	We need to show that 
	 	\begin{equation}
	 		h^{-1}(1-h^{-1})^h \geq P^*(A_j) = 10\delta. \label{hfactor}
	 	\end{equation}
	 	
	 	Since $h > 5$, we have that $(1 - h^{-1})^h \geq 1/3$. Hence, Condition \ref{hfactor} becomes
	 	
	 	\begin{equation}
	 		h^{-1} \geq 30\delta. \label{hAndDelta}
	 	\end{equation}
	 	Since $C_1 \log_2 M > 12$, we have
	 	\[
	 	  h\delta = \frac{2c \lceil C_1 \log_2 M \rceil}{\log_2 M} \leq \frac{13C_1c}{12},
	 	\]
	 	
	 	so Condition $\ref{hAndDelta}$ becomes 
	 	\[
	 	1 \leq \frac{720 C_1 c}{13}.
	 	\]
	 	Which, along with Condition \ref{constantRequirement}, is satisfied by $C = 6, M \geq 5$, and $c = 1/360$.

	 \end{proof}
	 
	 Note that the $c$ in Theorem \ref{PeresSchlagThm} can be chosen to be $240$, as opposed to our choice of $c = 360$ in Theorem \ref{effectivePeresSchlag}. This is due to the symmetry of the neighborhood relation $\nbr$ of Theorem \ref{CLLLL}, since it is defined based on sharing resample variables. On the other hand, the neighborhood relation of Theorem \ref{LLLL} is much more flexible and not symmetric. In the present case, the neighborhoods of Theorem \ref{effectivePeresSchlag} are twice as large as those in Theorem \ref{PeresSchlagThm}. Despite this discrepancy, the result on the asymptotic behavior of $\chi(\mc{g}(S))$ is maintained. In contrast, the gap between Theorems \ref{ComputableThueChoice} and \ref{ThueChoice} is more interesting. 
	 
	\subsection{Effective Thue Choice Number of the Path} 
	In this section, we apply the CLLLL (Theorem \ref{CLLLL}) to an application of the LLLL (Theorem \ref{LLLL}) due to \citeauthor{GrytczukPryzybyloZhu2011} \cite{GrytczukPryzybyloZhu2011}. 
	
	We say that a string is a square if it is of the form $xx$ for some word $x$. For example, \textit{hotshots} is a square. A string or sequence is square free if none of its substrings are squares. For example, \textit{cab\underline{acac}ba} and \textit{abcac\underline{bb}a} are not sqaure free. As previously mentioned, Thue's construction of a square free ternary sequence is said to be the starting point of a longstanding investigation into similar phenomena. \textit{List colorings} are such an example. Consider the path graph $P$ isomorphic to $\N$. We say that $L: P \to \mc{p}(\N)$ is a \textit{list assignment} of $\N$ if each $L(v)$ is non-empty. We think of $L$ as assigning each $v \in P$ a set $L(v) \subset \N$ of colors which we may use to color $v$. An $L$-coloring of $P$ is a coloring $c: P \to \N$ such that for each $v \in P$ we have that $c(v) \in L(v)$. The \textit{Thue choice number} of the path is the smallest number $n$ such that, whenever $|L(v)| = n$ for all $v \in P$, there is a square free $L$-coloring of $P$. We denote the Thue choice number of the path by $\pi_{\ch}(P)$. Since there are no square free binary sequences we have that $\pi_{\ch}(P) > 2$. 
	
	\begin{Theorem}[\cite{GrytczukPryzybyloZhu2011,GrytczukKozikMicek2013,Rosenfeld2020}]
		\label{ThueChoice}
		$\pi_{\ch}(P) \leq 4$.
	\end{Theorem}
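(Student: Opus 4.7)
The plan is to apply the lefthanded Lov\'asz local lemma (Theorem~\ref{LLLL}) in the natural variable context, following the approach of~\cite{GrytczukPryzybyloZhu2011}. Given a list assignment $L$ with $|L(v)|=4$ for every $v \in P$, introduce a mutually independent family $\mc{X}=\{x_v\}_{v \in P}$ with each $x_v$ uniform on $L(v)$. For $i \geq 0$ and $n \geq 1$, let $A_{i,n}$ be the event that $x[i,i+2n)$ is a square, i.e.\ $x_{i+j}=x_{i+n+j}$ for all $0 \leq j < n$. A valuation avoiding every $A_{i,n}$ is precisely a square-free $L$-coloring, so by K\"onig's lemma applied to the tree of valid partial colorings it suffices to produce such a valuation on every finite prefix of $P$.

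Next I would set up the LLLL ingredients. Order events by right endpoint, with $A_{i,n} \prec A_{j,m}$ iff $i+2n < j+2m$ (tie-broken arbitrarily), and take $\nbr(A_{i,n})$ to consist of those $A_{j,m}$ with $j+2m \leq i+2n$ whose variable interval meets $[i,i+2n)$. Condition~\ref{LOCondition} is immediate: any non-neighbor $C$ with $C \not\succ A_{i,n}$ has variables disjoint from $[i,i+2n)$ and right endpoint at most $i$, hence strictly smaller than that of every neighbor of $A_{i,n}$, so $C \prec B$ for every $B \in \nbr(A_{i,n})$ and the implication holds vacuously. The conditional independence bound $P^*(A_{i,n})=4^{-n}$ follows because every $B \not\succ A_{i,n}$ outside $\nbr(A_{i,n})$ depends only on variables in $[0,i)$, which are independent of $x[i,i+2n)$; hence $\Pr(A_{i,n}\mid\bigcap_{B\in\mc{B}}\bar{B})=\Pr(A_{i,n})\leq 4^{-n}$ by the pairwise-independent matching constraints $x_{i+j}=x_{i+n+j}$, each of which has probability at most $1/4$.

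The crux is producing $z\colon \mc{A}\to(0,1)$ satisfying $4^{-n} \leq z(A_{i,n})\prod_{B \in \nbr(A_{i,n})}(1-z(B))$. The lefthanded order is essential here: it roughly halves the neighborhood relative to the symmetric local lemma, since only events with smaller right endpoint enter the product. Following~\cite{GrytczukPryzybyloZhu2011}, I would take $z(A_{j,m})$ to decay exponentially in $m$ and estimate the infinite product using $\log(1-t)\approx -t$; the inequality then reduces to balancing $\log(4\rho)$ against a tail involving Euler's partition product, which admits a solution exactly at list size $4$. This tightness is the main obstacle: a cruder enumeration of overlapping events, or an ordering that fails to exploit the lefthanded asymmetry between long and short squares, forces a list size of at least $5$. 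Alternative proofs due to \citeauthor{GrytczukKozikMicek2013} and \citeauthor{Rosenfeld2020} achieve the same bound via entropy compression and direct counting, but the LLLL route fits the framework developed here and sets up the computable analogue in Theorem~\ref{ComputableThueChoice}.
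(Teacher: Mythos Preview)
The paper does not give its own proof of Theorem~\ref{ThueChoice}; the result is quoted from \cite{GrytczukPryzybyloZhu2011,GrytczukKozikMicek2013,Rosenfeld2020} purely as context, and the paper's contribution in this subsection is only the weaker computable bound of Theorem~\ref{ComputableThueChoice}. So there is nothing in the paper to compare your sketch against.

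Judged on its own terms, your outline has the right architecture but a real gap at the step that decides the constant. With the neighbourhood you choose---all $A_{j,m}$ with $j+2m\le i+2n$ whose interval meets $[i,i+2n)$---there are about $2n$ neighbours of each half\nobreakdash-length $m$, and the LLLL inequality $4^{-n}\le z_n\prod_m(1-z_m)^{2n}$ has no solution: for $z_m=q^m$ it asks that $q\,\phi(q)^2\ge 1/4$, whereas $q\,\phi(q)^2<0.12$ for every $q\in(0,1)$. The asymmetry you appeal to (``only events with smaller right endpoint enter the product'') is not the saving that gets the bound down to $4$. The genuine lefthanded insight in \cite{GrytczukPryzybyloZhu2011} is that the bound $\Pr(A_{i,n}\mid\cdot)\le 4^{-n}$ uses only the freshness of the \emph{right half} $x[i+n,i+2n)$, so one may shrink $\nbr(A_{i,n})$ to events with right endpoint in $(i+n,i+2n]$; your conditional\nobreakdash-independence paragraph actually argues for the coarser neighbourhood (non\nobreakdash-neighbours lie in $[0,i)$), whereas the tighter one only places them in $[0,i+n)$ and then requires the extra step of averaging over the exact values of $x[0,i+n)$ before invoking independence of $x[i+n,i+2n)$. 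Even after this correction, the sentence ``admits a solution exactly at list size $4$'' is carrying the whole weight of the argument and is left unverified; in \cite{GrytczukPryzybyloZhu2011} this numerical step is delicate and does not fall out of a generic exponential ansatz for $z$.
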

	
	\citeauthor{GrytczukPryzybyloZhu2011} use the lefthanded local lemma to give the chronologically first proof that $\pi_{\ch}(P) \leq 4$. Their proof inspired relatively simpler proofs by \citeauthor{GrytczukKozikMicek2013} \cite{GrytczukKozikMicek2013} and Rosenfeld \cite{Rosenfeld2020}. Furthermore, Rosenfeld has more recently showed that, if $|\bigcup_{v \in P} L(v)| \leq 4|$ and $|L(v)| = 3$ for all $v \in P$ then there is a square free $L$-coloring of $P$ \cite{Rosenfeld2022}.
	
	It is natural to ask whether one can compute such a coloring from $L$. Let $\pi_{\cch}(P)$ be the least $n$ such that for each $|L(v)| = n$ for each $v \in P$ there is a $L$-computable $L$-list coloring of $P$. It is immediate that $\pi_{\ch}(P) \leq \pi_{\cch}(P)$. 
	
	Unlike for the other applications of the LLLL, the proof that $\pi_{\ch}(P) \leq 4$ requires optimization. However, due to the neighborhoods of events in our application of Theorem~\ref{CLLLL} being larger than those in \citeauthor{GrytczukPryzybyloZhu2011}'s application of Theorem~\ref{LLLL}, we can only show that $\pi_{\cch}(P) \leq 6$.  

	\begin{Theorem}
		\label{ComputableThueChoice}
		$\pi_{\cch}(P) \leq 6.$
	\end{Theorem}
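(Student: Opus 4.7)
The plan is to apply Theorem \ref{CLLLL} to the natural square-avoidance setup. Take $\mc{X} = \{x_v : v \in P\}$, where each $x_v$ is uniformly distributed on $L(v)$ (of size $6$) and the $x_v$ are mutually independent. For each $i \geq 1$ and $n \geq 1$, let $A_{i,n}$ be the event that $x[i, i+n) = x[i+n, i+2n)$, so that avoiding every $A_{i,n}$ is equivalent to the resulting coloring being a square-free $L$-coloring of $P$. I will set $\vbl(A_{i,n}) = x[i, i+2n)$, $\rsp(A_{i,n}) = x[i+n, i+2n)$, and $\stc(A_{i,n}) = x[i, i+n)$. Since these $\vbl$ and $\rsp$ sets are intervals, Lemma \ref{IntervalsWork} supplies a linear order of type $\omega$ satisfying Condition~\ref{LefthandedRule}. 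The computability conditions of Theorem \ref{CLLLL} are immediate: each $x_v$ appears in $\rsp(A_{i,n})$ only for the finitely many pairs $(i,n)$ with $i + n \leq v < i + 2n$, $L$ is computable, and each $A_{i,n}$ is determined by the finite set of valuations of $\vbl(A_{i,n})$ under which the two halves agree.

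For the probabilistic conditions, set $P^*(A_{i,n}) = 6^{-n}$. Given any valuation $\mu$ of $\stc(A_{i,n})$, the event $A_{i,n}$ requires $x_{i+n+j} = \mu(x_{i+j})$ for every $0 \leq j < n$; these $n$ variables are independent and each uniform on a set of size $6$, so $\Pr(A_{i,n} \mid E_\mu) \leq 6^{-n}$. I choose $z(A_{i,n}) = 3^{-n}$. A direct interval count shows that the size-$m$ neighborhood of $A_{i,n}$ under the symmetric $\rsp$-overlap relation has at most $n + m - 1$ events (reduced by one when $m = n$ due to self-exclusion). In particular $A_{i,1}$ has no size-$1$ neighbors at all, because the singletons $\{x_{i+1}\}$ and $\{x_{j+1}\}$ overlap only when $j = i$; this asymmetry is what rescues the small-$n$ case.

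The main obstacle is verifying the local lemma condition uniformly in $n$. Writing $c_{n,m}$ for the size-$m$ neighbor count of $A_{i,n}$, the condition rearranges to
\[
2^n \prod_{m \geq 1}(1 - 3^{-m})^{c_{n,m}} \geq 1/\alpha
\]
for some fixed rational $\alpha \in (0,1)$. Since the Euler-type product $\prod_{m \geq 1}(1 - 3^{-m})$ exceeds $0.55$, the left-hand side grows geometrically at rate at least $(2 \cdot 0.55)^n$ times a constant, so only small $n$ are binding. Numerical evaluation shows that the binding case is $n = 2$, yielding approximately $1.094$; the case $n = 1$ gives roughly $1.30$ thanks to the absence of size-$1$ neighbors; and $n \geq 3$ is already comfortable because the $2^n$ factor dominates. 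Any rational $\alpha$ slightly above $1/1.094$ therefore suffices, and the verification will consist of direct computation for the finitely many small $n$ below the geometric threshold together with a routine tail bound on $\prod_{m \geq m_0}(1 - 3^{-m})^{c_{n,m}}$. Theorem \ref{CLLLL} then delivers a computable valuation of $\mc{X}$ falsifying every $A_{i,n}$, which is exactly a computable square-free $L$-coloring of $P$, establishing $\pi_{\cch}(P) \leq 6$.
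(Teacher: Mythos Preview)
Your approach is correct and differs from the paper's in an interesting way. The paper first engineers a more elaborate random process: at step $n+3$ it chooses $a_{n+3}$ uniformly from $L(n+3)$ minus one or two previously used colors, so that squares of lengths $1$ and $2$ are impossible by construction. This lets the paper discard the events $A_{k,1}$ and $A_{k,2}$ entirely, set $P^*(A_{k,n}) = 4^{-n}$ (since at most two colors are removed from a list of six), take polynomial weights $z(A_{k,n}) = 1/n^3$, and verify Condition~\ref{LeftLocalLemmaCondition} via crude bounds on products over $n \geq 3$. You instead keep the naive independent-uniform variables, obtaining the sharper $P^*(A_{i,n}) = 6^{-n}$ and exponential weights $z(A_{i,n}) = 3^{-n}$; the price is that you must handle $n = 1,2$ directly, which you do by exploiting the self-exclusion $c_{1,1} = 0$ and by explicit numerical evaluation showing the binding case $n = 2$ still clears $1$ (indeed, with $P_1 = \prod_{m\ge 1}(1-3^{-m}) \approx 0.560$ and $P_2 = \prod_{m\ge 1}(1-3^{-m})^m \approx 0.433$ one gets exactly $2^n P_1^{\,n-1} P_2/(1-3^{-n})$, which is about $1.30$, $1.09$, $1.13$ for $n=1,2,3$ and then grows like $(2P_1)^n \approx 1.12^n$). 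Your route is shorter in setup but tighter in the verification---the $9\%$ margin at $n=2$ is doing real work---while the paper invests more in the random process and then enjoys a looser endgame.
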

	\begin{proof}
		Let $L: \N \to \mc{p}(\N)$ be given such that each $|L(i)| = 6$. 
		
		We describe a random process computable from $L$ to construct a sequence $a_1,a_2,\dots$ such that there are no squares of sizes 1 or 2. We will apply Theorem \ref{CLLLL} to compute a sequence which is square free.
		
		Stage 1: Let $a_1$ be chosen uniformly at random from $L(1)$. 
		
		Stage $i$ for $i \in \{2,3\}$: Let $a_{n+1}$ be chosen uniformly at random from $L(i) \setminus \{a_{i-1}\}$.
		
		Stage $n+3$: Suppose that $a_1,\dots a_{n+2}$ have already been chosen. If $a_n \neq a_{n+2}$ then let $a_{n+3}$ be chosen uniformly at random from $L(n+3) \setminus \{a_{n+2}\}$. If $a_n = a_n+2$ then let $a_{n+3}$ be chosen uniformly at random from $L(n+3) \setminus \{a_{n+1},a_{n+2}\}$.  
		
		As usual, for $i < j$, we let $a[i,j)$ denote the string 
		
		\[a[i,j) = a_{i}, a_{i+1}, \dots, a_{j-1}.\]

		To apply Theorem \ref{CLLLL}, define $A_{k,n}$ to be the event that 
		\[
			a[k,k+n) = a[k+n,k+2n)
		\]
		and say that $n$ is the \textit{size} of $A_{k,n}$. 

		We can define variables with finite ranges $\mc{X} = \{x_1,x_2,\dots\}$ such that the random variable $a_i$ is determined by $\{x_j: j \leq i\}$. For example, we could have our $x_i$ take values which are ordered triples over the Cartesian product $6 \times 5 \times 4$, with the value of each coordinate corresponding to which element of $L(i)$ is chosen in the different cases of stage $n+3$. Note that in this setup, $A_{k,n}$ is not in the sigma algebra generated by $x[k,k+2n)$. However, we can set 
		\[
		\rsp(A_{k,n}) = x[k+n,k+2n)
		\]
		and $P^*(A_{k,n}) = \frac{1}{4^n}$ to satisfy the $P^*$ Condition~\ref{PStarCondition}. Furthermore, all of the computability conditions of Theorem \ref{CLLLL} are satisfied and the linear order is supplied by Lemma~\ref{IntervalsWork}.

		To verify the LLL Condition, we set $z(A_{k,n}) = \frac{1}{n^3}$. Note that $A_{k_0,n_0}$ has at most $n + n_0$ many neighbors of size $n$. Since the random process never produces squares of sizes 1 or 2, we can can ignore $A_{k,n}$ with $n \in \{1,2\}$.  Hence,
		
		\[
		\begin{split}
			z(A_{k_0,n_0}) \prod_{B \in \nbr\left(B_{k_0,n_0}\right)} \left(1 - z(B)\right) &\geq \frac{1}{n_0^3} \prod_{n \geq 3}\left(1 - \frac{1}{n^3}\right)^{n + n_0} \\
			&\geq \frac{1}{n_0^3} \left(\prod_{n \geq 3}\left(1 - \frac{1}{n^3}\right)^{n_0}\right)\left(\prod_{n \geq 3} \left(1 - \frac{1}{n^2}\right)\right)\\
			&\geq \frac{1}{n_0^3} \left(1 - \sum_{n \geq 3}\frac{1}{n^3}\right)^{n_0} \left( 1 - \sum_{n \geq 3} \frac{1}{n^2}\right)\\
			&\geq \frac{1}{n_0^3}(0.9229)^{n_0}(0.605)\\
			&\geq \frac{1}{4^{n_0}} = P^*(E_{k_0,n_0}).
		\end{split}
		\]
		
		To check the last inequality for $n \geq 3$, let $f(x) = \frac{0.605 \cdot (0.9229)^x}{x^3}$ and $g(x) = \frac{1}{4^x}$. Then, 
		\[\log(f(x) - g(x)) = -3 \log(x) + x (\log(0.9229) - \log(0.25)) + \log(0.605).
		\] 
		The equation $\log(x) =cx$ has exactly one solution $s$ for $c > 0$. Since $cx > \log(x)$ for $x > s$, it suffices to show that $f(3) - g(3) > 0$ and that $f'(3) - g'(3) > 0$. Indeed, $f(3) - g(3) \approx 0.0019 > 0$ and $f'(3) - g'(3) \approx 0.0026 > 0$, so the local lemma Condition~\ref{LeftLocalLemmaCondition} is satisfied. 	
	\end{proof}	
	
	\section{Conclusion}
	
	We conclude with a summary of questions arising from this work.
	
	While we can computably defeat any Player 2 strategy in the binary sequence games, it is still unknown whether the indeterminacy argument that pastes these winning moves into a full strategy is computable. 
	
	\begin{Question}
		Are there computable winning strategies for the binary sequence games in Theorems \ref{BeckComputableGame} and \ref{AlonComputableGame}?
	\end{Question}
	
	In each of our applications, there is a numerical gap between what can be shown non-constructively with Theorem~\ref{LLLL} and what can be shown constructively with Theorem~\ref{CLLLL}. This is due to the fact that the graph induced by the neighborhood relation in Theorem~\ref{CLLLL} is undirected, while Theorem~\ref{LLLL} allows directed graphs. As a result, the neighborhoods in Theorem~\ref{CLLLL} are typically twice as large as those in Theorem~\ref{LLLL}. The most salient consequence of this is that Theorem~\ref{ComputableThueChoice} is unable to get as close to the lower bound of $\pi_{\ch}(P) \geq 3$ as Theorem~\ref{ThueChoice} does. 
	
	\begin{Question}
		\label{DiffThueNumbers?}
		Is $\pi_{\ch}(P)$ strictly less than $\pi_{\cch}(P)$? 
	\end{Question}
	
	One way to bring the bound on $\pi_{\cch}(P)$ down would be to prove a stronger version of Theorem~\ref{CLLLL} that more closely resembles Theorem~\ref{LLLL} in allowing the graph induced by the neighborhood relation to be undirected. However, this would require a different analysis - there are legal logs in which $E_j \succ E_{j+1}$ and $\rsp(E_j)\cap \rsp(E_{j+1}) \neq \emptyset$. One possibility is to analyze logs going forward in time, as in \cite{Harvey2020}, as opposed to the backward-in-time analysis of Moser trees. Even if this is possible in the algorithmic case for finite witnesses, a new argument would be required to obtain a computable version. 
	
	\begin{Question}
		\label{FullLLLL?}
		Is Theorem~\ref{LLLL} computably true in full generality?
	\end{Question}
	
	One way to answer Questions~\ref{DiffThueNumbers?} and \ref{FullLLLL?} would be to demonstrate that $\pi_{\cch}(P) \leq 4$. 
	
	Saying that a sequence is square free is equivalent to saying that it avoids the pattern $xx$.  We can thus generalize Question~\ref{DiffThueNumbers?}.
	
	\begin{Question}
		Does pattern avoidance contain computational content?
	\end{Question}
	Here, we are purposely vague, as pattern avoidance can be generalized to, for example, list colorings of graphs. In contrast to the situation of pattern avoidance, the theory of forbidden \textit{words} is well studied. \citeauthor{CenzerDashtiKing2008} \cite{CenzerDashtiKing2008} showed that there is a c.e. sequence of binary strings $\sigma_1,\sigma_2,\dots$ such that the set of sequences $c$ not containing any $\sigma_i$ as a substring form a non-empty $\Pi^0_1$ class with no computable element. See also \cite{Miller2012,CallardSalomonVanier2024}. While seemingly related, it is not clear what link, if any, there is between the theory of non-repetitiveness (pattern avoidance) and the theory of subshifts (forbidden words). 
	
\newpage
\printbibliography

\end{document}